\numberwithin{equation}{section}
\newtheorem{thm}{Theorem}[section]
\newtheorem{cor}[thm]{Corollary}
\newtheorem{lem}[thm]{Lemma}
\newtheorem{rem}[thm]{Remark}
\newtheorem{exm}[thm]{Example}
\newcommand\reallywidehat[1]{%
	\savestack{\tmpbox}{\stretchto{%
			\scaleto{%
				\scalerel*[\widthof{\ensuremath{#1}}]{\kern-.6pt\bigwedge\kern-.6pt}%
				{\rule[-\textheight/2]{1ex}{\textheight}}
			}{\textheight}%
		}{0.5ex}}%
	\stackon[1pt]{#1}{\tmpbox}%
}
\newcommand{\vertiii}[1]{{\left\vert\kern-0.25ex\left\vert\kern-0.25ex\left\vert #1 
		\right\vert\kern-0.25ex\right\vert\kern-0.25ex\right\vert}}
\newcommand{\R}{{\mathbb R}}
\newcommand{\N}{{\mathbb N}}
\begin{document}
	
	\title[LDG method for Vlasov-Burger]{Discontinuous Galerkin Methods with  Generalized Numerical Fluxes for the Vlasov-Viscous Burgers' System}
	
	\bibliographystyle{alpha}
	
	\author[Harsha Hutridurga]{Harsha Hutridurga}
	\address{H.H.: Department of Mathematics, Indian Institute of Technology Bombay, Powai, Mumbai 400076 India.}
	\email{hutri@math.iitb.ac.in}
	
	\author[Krishan Kumar]{Krishan Kumar}
	\address{K.K.: Department of Mathematics, Indian Institute of Technology Bombay, Powai, Mumbai 400076 India.}
	\email{krishankumar@math.iitb.ac.in}
	
	\author[Amiya K. Pani]{Amiya K. Pani* } {\thanks{*Corresponding Author}}
	\address{A.K.P.: Department of Mathematics, Birla Institute of Technology and Science, Pilani, KK Birla Goa Campus, NH 17 B, Zuarinagar, Goa 403726 India.}
	\email{amiyap@goa.bits-pilani.ac.in}

	\date{\today}
		\maketitle
	
	\begin{abstract}
		In this paper, semi-discrete numerical scheme for the approximation of the periodic Vlasov-viscous Burgers' system is developed and analyzed. The scheme is based on the coupling of discontinuous Galerkin approximations for the Vlasov equation and local discontinuous Galerkin approximations for the viscous Burgers' equation. Both these methods use generalized numerical fluxes. The proposed scheme is both mass and momentum conservative. Based on generalized Gauss-Radau projections, optimal rates of convergence in the case of smooth compactly supported initial data are derived. Finally, computational results confirm our theoretical findings.
	\end{abstract}

\textbf{Key words.} Vlasov-viscous Burgers' system, discontinuous Galerkin method, LDG method, generalized numerical fluxes,
discrete mass and momentum conservation, generalized Gauss-Radau projection, optimal error estimates, numerical experiments.

\textbf{AMS Subject Classification.} 65N30, 65M60, 65M12, 65M15, 82D10.

	\section{Introduction}
	
	The simplest kinematic model for nonevaporating dilute two phase flow which takes into account only the exchange of momentum between the two phases is described by the following coupled system of viscous Burgers' equation and a Vlasov type equation: 
\begin{equation}\label{eq:continuous-model}
	\left\{
	\begin{aligned}
		\partial_t f + v\partial_x f + \partial_v\left(\left(u - v\right)f\right) &= 0 \quad\qquad \mbox{in}\quad (0,T]\times I \times \R,
		\\
		f(0,x,v) &= f_0(x,v) \quad\mbox{in}\quad I\times\R, 
	\end{aligned}
	\right.
\end{equation}
\begin{equation}\label{contburger}
	\left\{
	\begin{aligned}
		\partial_tu + u\partial_x u - \epsilon \partial_x^2 u &= \rho V - \rho u  \quad \mbox{in} \quad (0,T]\times I,
		\\
		u(0,x) &= u_0(x) \qquad\mbox{in}\quad I,  
	\end{aligned}
	\right.
\end{equation}
with periodic boundary conditions:
\begin{equation*}
	f(t,L,v) = f(t,0,v), \,\,\,u(t,L) = u(t,0) \quad \mbox{and} \quad u_x(t,L) = u_x(t,0).
\end{equation*}
	Here, $I = [0,L]$, $u = u(t,x)$ represents the fluid velocity, $f = f(t,x,v)$ denotes  the distribution function and $\epsilon > 0$ represents the viscosity of the fluid. The coupling between two systems is due to drag force which is proportional to relative velocity $(u - v)$.
	
In the above model, $f(t,x,v)$ describes the dispersed phase whereas $u(t,x)$ describes the background continuous phase. Such dispersed two-phase systems are relevant in many applications for example, in modeling combustion phenomena in diesel engines, where a spray of droplets is injected in the device and mixed with the gas prior to combustion \cite{o1981collective,williams2018combustion}. Here, the dispersed phase is the spray, whereas the continuous phase is the surrounding gas.
		
	The well-posedness of \eqref{eq:continuous-model}-\eqref{contburger}, global existence and uniqueness of a solution $u \in C^0([0,T];C^2(\R))$ and $f \in C^0([0,T];C_0^1(\R \times \R))$ for $u_0 \in C^2(\R)$ and $f_0 \in C_0^1(\R \times \R)$, was proved by Domelevo and Roquejoffre in \cite{domelevo1999existence}. Further, the global existence of a weak solution $(u,f) \in L^2(0,T;H^1(\R)) \times L^\infty(0,T;\mathcal{M}^1(\R \times \R))$ for $(u_0,f_0) \in L^2(\R) \times L^1(\R \times \R)$ was shown by Goudon in \cite{goudon2001asymptotic}, where $\mathcal{M}^1(\R\times\R)$ stands for the set of bounded measures on the domain $\R\times\R$. In this paper, with domain $ I\times\R$ and periodic boundary conditions, we show in the appendix the existence of a unique strong solution $(u,f) \in \left(L^\infty(0,T;H^1(I))\cap L^2(0,T;H^2(I))\right) \times \left(L^\infty(0,T;L^1(I\times\R)\cap L^\infty(I\times\R))\cap L^\infty(0,T;H^1(I\times\R))\right)$ \\
for $(u_0,f_0) \in H^1(I)\times L^1(I\times\R) \cap L^\infty(I\times\R)\cap H^1(I\times\R)$.
	
	Discontinuous Galerkin (DG) method belongs to a class of finite element methods, which preserve the conservation property of approximating problems. It was first introduced by Reed and Hill in 1973 \cite{reed1973triangular} in order to solve a linear first order hyperbolic equation and further, for time-dependent nonlinear conservation law, it was developed by Cockburn and Shu in 1989 \cite{cockburn1989tvb}. The local discontinuous Galerkin (LDG) method, a special class of DG schemes for approximating solutions of the nonlinear hyperbolic equation, was first introduced by Cockburn and Shu in 1998 \cite{cockburn1998local}. For LDG method, we first rewrite systems containing high order derivative terms into an equivalent first order system and we discretized the latter by using the standard DG method with appropriate numerical fluxes. 
	
	This paper proposes DG method for the Vlasov equation and LDG method for the viscous Burgers' equation. In order to define DG method, the numerical fluxes are the most important ingredients which ensure stability and accuracy of the method. In this paper, we choose generalized numerical fluxes for both the equations, because several numerical experiments suggest a trade-off between the viscosity parameter $\epsilon$ and the generalized flux parameters. We mention here that, unlike the continuous case, it is difficult to prove non-negativity of the discrete distribution function  and this creates a major problem in the subsequent analysis.
	
    Let us mention some work present in the literature which are related to the current paper. The upwind biased numerical fluxes are considered for linear hyperbolic equation in \cite{meng2016optimal}, and generalized alternating fluxes for linear diffusion term in combination with the central flux for nonlinear convection term are considered in \cite{liu2015,ploymaklam2017priori}. Further, for solving linear convection-diffusion equation, the generalized numerical fluxes with two independent weight functions are defined in \cite{cheng2017application}.
    For generalized numerical fluxes, optimal error estimates can be derived by virtue of some special global projections. Motivated by \cite{cao2017superconvergence,meng2012superconvergence,cheng2017,li2019analysis,meng2016optimal,li2020discontinuous,liu2020optimal,ayuso2009discontinuous}, we define a global projection connected to Gauss-Radau projection. 
    Our major contributions in this paper are as follows:
	\begin{itemize}
		\item Applying DG scheme for the Vlasov equation and LDG scheme for the viscous Burgers' equation keeping time variable continuous and using generalized numerical fluxes, a semidiscrete system for the coupled problems \eqref{eq:continuous-model}-\eqref{contburger} is developed. Then, discrete conservation properties are established. 
		\item For the first time, based on generalized Gauss-Radau projections, optimal error estimates for a semi-discrete system 
are derived. Special care has been taken to avoid bound not depending on $e^{\epsilon t}$ but depending on $\frac{1}{\sqrt{\epsilon}}$ in the error analysis by using non-linear version of the Gr\"onwall type result.  
\item Some numerical experiments are conducted whose results confirm our theoretical findings. 
	\end{itemize}
The paper is structured as follows. In section \ref{cts}, we describe the continuous model with some properties and notations. In section  \ref{dg}, we introduce semi-discrete DG method with generalized numerical fluxes, some notations which will be used through out this paper and also some properties of the discrete solution. The error analysis for the semi-discrete method is detailed in section \ref{err} followed by some computational results in section \ref{computation} to validate our theoretical findings. Finally, in Appendix \ref{exststrong}, we proof the existence and uniqueness of a strong solution to the continuum model.

\section{Continuous Problem}\label{cts}

We set the local density $\rho$ and the local macroscopic velocity $V$, respectively, as
\[
\rho(t,x) = \int_{\R} f(t,x,v)\,{\rm d}v \quad \mbox{and} \quad V(t,x) = \frac{1}{\rho}\int_{\R} f(t,x,v)v\,{\rm d}v.
\]
Now define mass and momentum, respectively, by
	\[
	\int_{\R}\int_I f(t,x,v)\,{\rm d}x\,{\rm d}v \quad \mbox{and} \quad \int_{\R}\int_I vf(t,x,v)\,{\rm d}x\,{\rm d}v.
	\]
	We denote the $k^{th}$ order velocity moments by
	\[
	m_kf(t,x) = \int_{\R}|v|^kf(t,x,v)\,{\rm d}v, \quad \mbox{for}\quad k \in \N\cup\{0\}.
	\]
	Throughout this paper, we use standard notation for Sobolev spaces. We denote by $W^{m,p}$ the $L^p$-Sobolev space of order $m \geq 0$ and by $C_0^1(\R\times\R)$ the class of $C^1$ functions on $\R\times\R$ which are compactly supported.\\
	Throughout this manuscript, any function defined on $I$ is assumed to be periodic in the $x$-variable.

	The following theorem shows existence and uniqueness of the classical solution to \eqref{eq:continuous-model}-\eqref{contburger} posed on $\R\times\R$ (for detailed proof, see \cite[Theorem 2.1, p. 65]{domelevo1999existence}). 
	\begin{thm}\label{ctsexst}
		Let $u_0 \in C^2(\R)$ and $f_0 \in C_0^1(\R\times\R), f_0 \geq 0$ be given. Then, the Vlasov-viscous Burgers' system has a unique solution $\left(u, f\right) \in C^0([0,\infty);C^2(\R)) \times C^0([0,\infty);C^1_0(\R\times\R))$. 
	\end{thm}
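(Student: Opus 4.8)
The plan is to decouple the system by a fixed-point argument: given a velocity field $u$, solve the linear Vlasov transport equation for $f$ by the method of characteristics; given $f$ (hence the moments $\rho = m_0 f$ and $\rho V = m_1 f$), solve the viscous Burgers' equation for a new velocity field $\tilde u$ using parabolic theory; and then show that the resulting map $u \mapsto \tilde u$ is a contraction on a short time interval. Local-in-time existence and uniqueness follow from the Banach fixed-point theorem, after which uniform a priori bounds are used to continue the solution to all of $[0,\infty)$.

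For the Vlasov step I would expand $\partial_v((u-v)f) = (u-v)\partial_v f - f$ (using $\partial_v(u-v) = -1$ since $u$ is independent of $v$) and read off the characteristic system $\dot X = V$, $\dot V = u(t,X) - V$, along which $\tfrac{d}{dt} f(t,X(t),V(t)) = f$. This gives the representation formula $f(t,x,v) = e^{t}\, f_0(\mathcal{X}(0;t,x,v), \mathcal{V}(0;t,x,v))$ in terms of the backward flow, from which $f \geq 0$ is immediate and $C^1$-regularity follows from $C^1$ dependence of the flow on initial data (which is where $u \in C^2$, hence $C^1$ in $x$, is needed). The pointwise exponential growth of $f$ is exactly compensated by the volume contraction of the flow — the vector field $(v,\,u-v)$ has divergence $-1$ — so the mass $\iint f$ is conserved. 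Crucially, the damping in $\dot V = u - V$ yields $\tfrac{d}{dt}|V|^2 \le 2|V|(\|u\|_\infty - |V|)$, hence $|V(t)| \le \max(|V(0)|,\, \|u\|_\infty)$; this propagates the compact support of $f$ in $v$ and bounds its velocity-support radius in terms of $\|u\|_{L^\infty}$.

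For the Burgers step the source $\rho V - \rho u = m_1 f - (m_0 f)\,u$ is a known function once $f$ is determined, and the equation $\partial_t u + u\partial_x u - \epsilon \partial_x^2 u = \rho V - \rho u$ is uniformly parabolic. I would treat it by linearization together with Schauder (or analytic-semigroup) estimates to obtain $u \in C^0([0,T]; C^2(\R))$, using the parabolic maximum principle to control $\|u\|_\infty$ in terms of $\|u_0\|_\infty$ and the source. Posing the fixed-point map on a ball in $C^0([0,T^*]; C^2)$ and estimating the Lipschitz dependence of the characteristics on $u$ (and of $\rho$, $\rho V$ on $f$) shows the map is a contraction for $T^*$ small, giving local existence and uniqueness.

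The main obstacle is the global continuation. There is a mutual dependence: the bound on $\|u\|_\infty$ controls the velocity support of $f$, and hence the moments $\rho$ and $\rho V$ feeding the Burgers source, while those same moments are what the maximum principle needs in order to bound $\|u\|_\infty$. The key is that the drag is dissipative in both equations, so one can close a Grönwall inequality for the coupled quantities $\|u(t)\|_\infty$ and the velocity-support radius $R(t)$, obtaining at most exponential growth with no finite-time blow-up. Once these are bounded on every $[0,T]$, the local solution extends to $[0,\infty)$; uniqueness follows from a stability estimate on the difference of two solutions via Grönwall, and the stated continuity in time and higher spatial regularity are routine bootstrapping from the representation formula and parabolic smoothing.
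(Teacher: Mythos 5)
Your proposal is correct in outline, but note that the paper does not actually prove Theorem \ref{ctsexst}: the result is quoted from Domelevo and Roquejoffre \cite{domelevo1999existence}, and your sketch is essentially a reconstruction of that cited proof (characteristics with the representation $f(t,x,v)=e^{t}f_0$ evaluated along the backward flow, damping control of the velocity support, parabolic theory plus maximum principle for the Burgers' part, short-time contraction, continuation). The proof the paper itself supplies, in Appendix \ref{exststrong}, concerns the analogous strong-solution statement in the periodic $H^1$ setting; it has the same decouple-and-iterate architecture (freeze $u^*$, solve Vlasov, solve Burgers, contract in $X=L^\infty(0,T;H^1(I))$), but the tools are necessarily different: since the frozen velocity is only $H^1$, the transport step is solved by DiPerna--Lions renormalization theory (Lemma \ref{fexist}) rather than classical characteristics, and the contraction (Lemma \ref{Tcontr}) rests on energy estimates and propagation of velocity moments (Lemma \ref{momentbddgrad}) rather than pointwise flow estimates. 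Your route buys positivity, compact support and classical regularity directly from the representation formula; the paper's route buys applicability to rough data, which is what its setting requires.

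One point in your continuation argument deserves sharpening. If you bound the Burgers' source naively, $\|\rho V\|_{L^\infty}\lesssim R(t)^2\|f(t)\|_{L^\infty}$ with $\|f(t)\|_{L^\infty}\le e^{t}\|f_0\|_{L^\infty}$ and $R(t)\le\max\bigl(R(0),\sup_{s\le t}\|u(s)\|_{L^\infty}\bigr)$, the resulting differential inequality for $M(t)=\sup_{s\le t}\|u(s)\|_{L^\infty}$ is of Riccati type ($M'\lesssim M^2$), which does admit finite-time blow-up; no linear Gr\"onwall argument closes this way. The estimate closes only through the sign of the drag, as a maximum-principle argument: at a spatial maximum of $u$ at which $u$ exceeds the velocity-support radius of $f$, the source $\int f(v-u)\,{\rm d}v$ is nonpositive, and combined with $\frac{{\rm d}}{{\rm d}t}|V|\le\|u\|_{L^\infty}-|V|$ along characteristics this yields the uniform bound $\max\bigl(\|u(t)\|_{L^\infty},R(t)\bigr)\le\max\bigl(\|u_0\|_{L^\infty},R(0)\bigr)$, with no growth at all. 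Your appeal to ``dissipativity of the drag in both equations'' is exactly the right mechanism, but it must be implemented as this sign argument rather than as a Gr\"onwall inequality on crude moment bounds.
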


	\subsection{Some properties of the solution}

	We begin this subsection by gathering certain conservation properties of \eqref{eq:continuous-model}-\eqref{contburger}, the proof of which can be found in \cite[Proposition 2.1, p. 1374]{goudon2001asymptotic}.
	\begin{lem}\label{lem:cons}
	The solution $(f,u)$ to the Vlasov - viscous Burgers' system has the following properties:
	\begin{enumerate}[(i)]
		\item \textbf{(Positivity preserving)} For any given non-negative initial data $f_0$, the solution $f$ is also non-negative.
		\item \textbf{(Mass conservation)} The total mass is conserved in the sense that
		\begin{equation*}
			\int_{\R}\int_I\,f(t,x,v)\,{\rm d}x\,{\rm d}v = \int_{\R}\int_I\,f_0(x,v)\,{\rm d}x\,{\rm d}v, \quad t \in [0,T].
		\end{equation*}
	   \item \textbf{(Total Momentum conservation)} The solution pair $\left(f,u\right)$ conserves total momentum in the following sense: 
	   \begin{equation*}
	   	\int_{\R}\int_I vf(t,x,v)\,{\rm d}x\,{\rm d}v + \int_Iu(t,x)\,{\rm d}x = \int_{\R}\int_I vf_0(x,v)\,{\rm d}x\,{\rm d}v + \int_Iu_0(x)\,{\rm d}x,\,\, t \in [0,T].
	   \end{equation*}
       \item \textbf{(Total energy dissipation)}
       The total energy of the system dissipates in the sense that
       \begin{equation}\label{energy}
       	\begin{aligned}
       		\int_{\R}\int_I\, v^2\,f(t,x,v)\,{\rm d}x\,{\rm d}v + \int_{I}u^2(t,x)\,{\rm d}x \leq \int_{\R}\int_I v^2 f_0(x,v)\,{\rm d}x\,{\rm d}v + \int_{I}u_0^2(x)\,{\rm d}x, \quad t \in [0,T]
       	\end{aligned}  
       \end{equation}
   provided $f(t,x,v)$ is non-negative.
	\end{enumerate}
	\end{lem}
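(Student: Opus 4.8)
The plan is to handle the four items in turn, using the transport structure of the Vlasov equation for positivity and testing each equation against the weights $1$, $v$, $v^2$ and $u$ for the three integral balances. Throughout I would invoke Theorem \ref{ctsexst} to ensure that $f(t,\cdot,\cdot)$ remains compactly supported in $v$, so that all boundary terms at $v = \pm\infty$ vanish, and that $f$ and $u$ are $x$-periodic, so that boundary contributions at $x = 0, L$ cancel. The definitions of $\rho$ and $V$ are used repeatedly through the identity $\rho V = \int_{\R} v f\,{\rm d}v$.

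For positivity (i) I would rewrite \eqref{eq:continuous-model} in non-divergence form. Since $u$ does not depend on $v$, expanding $\partial_v((u-v)f) = (u-v)\partial_v f - f$ turns the equation into $\partial_t f + v\partial_x f + (u-v)\partial_v f = f$. Along the characteristics $(X(t),W(t))$ solving $\dot X = W$, $\dot W = u(t,X) - W$, this reads $\tfrac{d}{dt} f(t,X(t),W(t)) = f(t,X(t),W(t))$, so the value of $f$ along each characteristic equals $e^{t}$ times its initial value. As $f_0 \geq 0$ and the characteristics foliate phase space, $f(t,\cdot,\cdot) \geq 0$ for all $t$.

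For mass (ii) and momentum (iii) I would integrate \eqref{eq:continuous-model} against $1$ and against $v$ over $I \times \R$. In both cases the transport term integrates to zero in $x$ by periodicity; for mass the drag term integrates to zero directly in $v$, while for momentum an integration by parts in $v$ converts $\int_{\R} v\,\partial_v((u-v)f)\,{\rm d}v$ into $-\int_{\R}(u-v)f\,{\rm d}v = -(\rho u - \rho V)$, giving $\tfrac{d}{dt}\int_{\R}\int_I v f\,{\rm d}x\,{\rm d}v = \int_I (\rho u - \rho V)\,{\rm d}x$. Integrating \eqref{contburger} over $I$, the convective and viscous terms vanish by periodicity, leaving $\tfrac{d}{dt}\int_I u\,{\rm d}x = \int_I (\rho V - \rho u)\,{\rm d}x$. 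Adding the two balances, the coupling contributions cancel exactly, yielding conservation of total momentum.

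The energy dissipation (iv) is the step I expect to require the most care. I would test \eqref{eq:continuous-model} against $v^2$ and \eqref{contburger} against $u$. After the same periodicity cancellations, an integration by parts in $v$ (producing $-2\int_{\R} v(u-v)f\,{\rm d}v$), and an integration by parts in $x$ for the viscous term (producing the nonnegative dissipation $2\epsilon\int_I (\partial_x u)^2\,{\rm d}x$), the two resulting identities combine so that all coupling terms assemble into a single perfect square:
\[
\frac{d}{dt}\left(\int_{\R}\int_I v^2 f\,{\rm d}x\,{\rm d}v + \int_I u^2\,{\rm d}x\right) + 2\epsilon\int_I (\partial_x u)^2\,{\rm d}x = -2\int_{\R}\int_I (v-u)^2 f\,{\rm d}x\,{\rm d}v.
\]
The crux is recognizing this square structure after summing the kinetic and fluid balances, i.e.\ matching the pieces $-2\int_{\R}\int_I v^2 f$, $4\int_I u\rho V$ and $-2\int_I \rho u^2$ against $-2\int_{\R}\int_I (v-u)^2 f$. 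Once this identity is established, the positivity from part (i) makes the right-hand side nonpositive, and discarding the nonnegative viscous term gives $\tfrac{d}{dt}(\cdots) \leq 0$, which integrates in time to the claimed bound \eqref{energy}.
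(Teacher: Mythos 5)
Your proposal is correct and takes essentially the same route as the paper's source: the paper does not prove this lemma itself but cites Goudon (Proposition 2.1), where positivity follows from the characteristics of the non-divergence form $\partial_t f + v\partial_x f + (u-v)\partial_v f = f$ and the three balances follow from testing the Vlasov equation against $1$, $v$, $v^2$ and the Burgers' equation against $1$, $u$, exactly as you do. In particular, the combined identity you derive, $\frac{{\rm d}}{{\rm d}t}\left(\int_{\R}\int_I v^2 f\,{\rm d}x\,{\rm d}v + \int_I u^2\,{\rm d}x\right) + 2\epsilon\int_I u_x^2\,{\rm d}x = -2\int_{\R}\int_I (u-v)^2 f\,{\rm d}x\,{\rm d}v$, is (after integrating in time and dividing by $2$) precisely the energy identity the paper records immediately after the lemma, so your perfect-square bookkeeping matches the intended argument.
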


    While proving the energy dissipation property \eqref{energy}, we also obtain the following identity:
	\begin{equation*}
		\begin{aligned}
			\frac{1}{2}\int_{\R}\int_I v^2f\,{\rm d}x\,{\rm d}v + \frac{1}{2}\int_I u^2\,{\rm d}x + \epsilon\int_{0}^t\int_I u_x^2\,{\rm d}x\,{\rm d}t &+ \int_0^t\int_{\R}\int_I\left(u - v\right)^2f\,{\rm d}x\,{\rm d}v\,{\rm d}t 
			\\
			&= \frac{1}{2}\int_{\R}\int_I v^2f_0\,{\rm d}x\,{\rm d}v + \frac{1}{2}\int_I u_0^2\,{\rm d}x.
		\end{aligned}
	\end{equation*} 
	If $ v^2f_0 \in L^1(I\times \R)$ and if $u_0 \in L^2(I)$, then the above equality shows
	\begin{equation}\label{3}
		u \in L^\infty(0,T;L^2(I)) \quad \mbox{and} \quad u_x \in L^2([0,T]\times I).
	\end{equation}
	A use of the Sobolev inequality yields
	\begin{equation}
		u \in L^2(0,T;L^\infty(I)).
	\end{equation}
	Note that for any $z \in L^\infty(I)$, 
	\begin{equation*}
		\|z\|_{L^4(I)} \leq C\|z\|_{L^2(I)}^\frac{1}{2}\|z\|^\frac{1}{2}_{L^\infty(I)}.
	\end{equation*}
	Hence, 
	\begin{equation}\label{7}
		\int_0^T\int_{I}|u|^4\,{\rm d}x\,{\rm d}t \leq C\|u\|_{L^\infty(0,T;L^2(I)}^2\|u\|^2_{L^2(0,T;L^\infty(I))} \leq C.
	\end{equation}


The following lemma yields integrability estimates on the local density and on the momentum. Since these appear as source terms in the viscous Burgers' equation, these estimates are crucial in deducing the regularity result of solution to \eqref{contburger}. The proof of the following result is similar to \cite[Lemma 2.2, p.56]{Hamdache_1998}. Hence, we skip the proof.	
	\begin{lem}\label{density}
		Let $p,r \geq 1$. Let $u \in L^r(0,T;L^{p+1}(I)), f_0 \in L^\infty(I\times\R)\cap L^1(I \times \R)$. Further, let
		\[
		\int_{\R}\int_I\,|v|^pf_0\,{\rm d}x\,{\rm d}v < \infty.
		\]
		 Then, the local density $\rho$ and the momentum $\rho V$ satisfy the following:
		 \begin{equation*}
		 	\rho \in L^\infty(0,T;L^{p+1}(I)) \quad \mbox{and} \quad \rho V \in L^\infty(0,T;L^{\frac{p+1}{2}}(I)).
		 \end{equation*}		
	\end{lem}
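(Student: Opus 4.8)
The plan is to control the velocity moments of $f$ and then read off both assertions from pointwise interpolation bounds, since $\rho = m_0 f$ and $|\rho V| \le m_1 f$ by the positivity in Lemma \ref{lem:cons}. First I would establish a uniform-in-time $L^\infty$ bound on the distribution function. Rewriting the Vlasov equation \eqref{eq:continuous-model} in non-divergence form as $\partial_t f + v\partial_x f + (u-v)\partial_v f = f$ and integrating along the characteristics $\dot x = v$, $\dot v = u-v$, one finds $\frac{{\rm d}}{{\rm d}t} f = f$ along each characteristic, so that $f(t) = e^{t} f_0$ there and hence $\|f(t)\|_{L^\infty(I\times\R)} \le e^{T}\|f_0\|_{L^\infty(I\times\R)}$ for $t\in[0,T]$.

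The key tool is the interpolation inequality: for $0\le k<p$ and $f\ge 0$,
\[
m_k f(t,x) \le C\,\|f(t)\|_{L^\infty}^{\frac{p-k}{p+1}}\,\bigl(m_p f(t,x)\bigr)^{\frac{k+1}{p+1}},
\]
obtained by splitting $\int_\R |v|^k f\,{\rm d}v$ over $\{|v|\le R\}$ and $\{|v|>R\}$, bounding the first piece by $CR^{k+1}\|f\|_{L^\infty}$ and the second by $R^{k-p} m_p f$, and optimizing in $R$. Taking $k=0$ and $k=1$ and inserting the $L^\infty$ bound above yields the pointwise estimates $\rho^{p+1} \le C\, m_p f$ and $|\rho V|^{\frac{p+1}{2}} \le C\, m_p f$ with $C = C(T,\|f_0\|_{L^\infty})$. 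Integrating these over $I$ reduces everything to a uniform bound on the global top moment $M_p(t):=\int_I\int_\R |v|^p f\,{\rm d}v\,{\rm d}x$.

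To close the argument I would derive a differential inequality for $M_p$. Multiplying \eqref{eq:continuous-model} by $|v|^p$, integrating over $I\times\R$, using periodicity in $x$ to annihilate the transport term and integrating the drag term by parts in $v$, I obtain (using $|v|^{p-1}\,{\rm sgn}(v)\,v = |v|^p$)
\[
\frac{{\rm d}}{{\rm d}t} M_p = p\int_I\int_\R |v|^{p-1}\,{\rm sgn}(v)\,(u-v)\,f\,{\rm d}v\,{\rm d}x \le p\int_I |u|\, m_{p-1} f\,{\rm d}x - p\,M_p.
\]
Dropping the dissipative term $-pM_p$, applying the interpolation bound to $m_{p-1}f$ and then Hölder's inequality with exponents $p+1$ and $\tfrac{p+1}{p}$ gives
\[
\frac{{\rm d}}{{\rm d}t} M_p \le C\,p\,\|u(t)\|_{L^{p+1}(I)}\, M_p^{\frac{p}{p+1}}.
\]
This is a Bernoulli-type nonlinear Gr\"onwall inequality; rewriting it as $(p+1)\tfrac{{\rm d}}{{\rm d}t}\bigl(M_p^{1/(p+1)}\bigr)\le C p\,\|u\|_{L^{p+1}}$ and integrating in time yields
\[
M_p(t)^{\frac{1}{p+1}} \le M_p(0)^{\frac{1}{p+1}} + \frac{Cp}{p+1}\int_0^T \|u(s)\|_{L^{p+1}(I)}\,{\rm d}s,
\]
where the time integral is finite by H\"older in time since $u\in L^r(0,T;L^{p+1}(I))$ with $r\ge 1$, and $M_p(0)<\infty$ by hypothesis. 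Hence $\sup_{[0,T]} M_p<\infty$, and integrating the two pointwise bounds delivers $\rho\in L^\infty(0,T;L^{p+1}(I))$ and $\rho V\in L^\infty(0,T;L^{(p+1)/2}(I))$.

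The main obstacle I anticipate is making the moment computation fully rigorous: justifying that the boundary contributions at $|v|\to\infty$ vanish in the integration by parts (which requires enough decay of $f$ in $v$, available here from the compact support or rapid decay propagated from $f_0$) and closing the estimate through the nonlinear Gr\"onwall step rather than a crude exponential bound. The endpoint $p=1$ is slightly special, since then $m_{p-1}f=\rho$ and one uses the interpolation with $k=0$ directly, but the structure of the argument is otherwise unchanged.
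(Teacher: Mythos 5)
Your proposal is correct and follows essentially the same route as the paper: the paper skips the proof of Lemma \ref{density}, citing Hamdache's Lemma 2.2, and that cited argument is precisely the one you reconstruct — a characteristics-based bound $\|f(t)\|_{L^\infty}\le e^{T}\|f_0\|_{L^\infty}$, the velocity-moment interpolation $m_k f \lesssim \|f\|_{L^\infty}^{(p-k)/(p+1)}(m_p f)^{(k+1)/(p+1)}$ giving the pointwise bounds on $\rho$ and $\rho V$, and a Bernoulli-type Gr\"onwall estimate on $M_p(t)$ closed by $u\in L^r(0,T;L^{p+1}(I))$. The details you flag (vanishing boundary terms in $v$, the endpoint $p=1$) are handled the same way in the reference, so no gap remains.
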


	\begin{rem}
		Taking $p = 3$ in Lemma \ref{density} yields
		\begin{equation}\label{rhos}
			\rho \in L^\infty(0,T;L^4(I)) \quad \mbox{and} \quad \rho V \in L^\infty(0,T;L^2(I)).
		\end{equation}
	\end{rem}

	
	The following lemma gives $L^\infty$ estimate for local density $\rho$ in time and space variable. The proof of this can be found in \cite[Proposition 4.6, p. 44]{han2019uniqueness}.

	\begin{lem}\label{rhoinfty}
		Let $u \in L^1(0,T;L^\infty(I))$. Let $f_0(x,v)$ be such that $\sup_{C^r_{t,v}}f_0 \in L^\infty_{loc}\left(\R_+;L^1(\R)\right)$ for all $r > 0$, where $C^r_{t,v} := I \times B(e^tv,r)$. Here $B(e^{tv},r)$ denotes the ball of radius $r$ with center at $e^{tv}$. Then, the following estimate holds:
		\begin{equation*}
			\|\rho\|_{L^{\infty}((0,T)\times I)} \leq e^{T}\sup_{t \in [0,T]}\|\sup_{C^r_{t,v}}f_0\|_{L^1(\R)}.
		\end{equation*}     	
	\end{lem}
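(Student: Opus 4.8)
The plan is to represent $\rho$ through the method of characteristics associated with the Vlasov equation and to exploit the special structure of the transport field. First I would rewrite the Vlasov equation \eqref{eq:continuous-model} in conservation form as $\partial_t f + \nabla_{(x,v)}\cdot(b f) = 0$ with transport field $b(t,x,v) = (v,\,u(t,x)-v)$, and compute its phase-space divergence,
\begin{equation*}
\nabla_{(x,v)}\cdot b = \partial_x v + \partial_v(u-v) = -1.
\end{equation*}
Consequently, along the characteristic curves $s \mapsto (X(s),V(s))$ solving $\dot X = V$, $\dot V = u(s,X)-V$, the distribution function satisfies $\frac{d}{ds}f(s,X(s),V(s)) = f(s,X(s),V(s))$, so that $f$ grows exactly like $e^{s}$ along trajectories.

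Next I would use the backward flow to obtain a representation formula. Denoting by $(X(0;t,x,v),V(0;t,x,v))$ the foot at time $0$ of the characteristic arriving at $(x,v)$ at time $t$, the advective identity above gives
\begin{equation*}
f(t,x,v) = e^{t}\,f_0\big(X(0;t,x,v),\,V(0;t,x,v)\big).
\end{equation*}
The velocity equation $\dot V = u - V$ integrates explicitly: multiplying by $e^{s}$ and integrating from $0$ to $t$ yields $V(0) = e^{t}v - \int_0^t e^{s}u(s,X(s))\,{\rm d}s$, whence
\begin{equation*}
\big|V(0;t,x,v) - e^{t}v\big| \leq \int_0^t e^{s}\|u(s)\|_{L^\infty(I)}\,{\rm d}s \leq e^{T}\|u\|_{L^1(0,T;L^\infty(I))} =: r.
\end{equation*}
Because $u \in L^1(0,T;L^\infty(I))$, this radius $r$ is finite and independent of $(t,x,v)$, and by periodicity $X(0;t,x,v) \in I$; hence the backward foot lies in $C^r_{t,v} = I \times B(e^{t}v,r)$, which is precisely the set appearing in the hypothesis.

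Finally I would combine these facts. Majorizing $f_0(X(0),V(0)) \le \sup_{C^r_{t,v}}f_0$ pointwise and integrating in $v$,
\begin{equation*}
\rho(t,x) = e^{t}\int_{\R} f_0\big(X(0),V(0)\big)\,{\rm d}v \leq e^{t}\int_{\R}\Big(\sup_{C^r_{t,v}}f_0\Big)\,{\rm d}v = e^{t}\,\big\|\sup_{C^r_{t,v}}f_0\big\|_{L^1(\R)},
\end{equation*}
and taking the supremum over $x \in I$ and $t \in [0,T]$ together with $e^{t}\le e^{T}$ yields the claimed bound. Note that no Jacobian enters, since the argument is a pointwise majorization followed by integration in $v$ rather than a change of variables. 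The main obstacle I anticipate is the rigorous justification of the characteristic flow and of the representation formula under the stated regularity $u \in L^1(0,T;L^\infty(I))$: as $u$ need not be Lipschitz in $x$, uniqueness of the trajectories is not automatic and would in general require a DiPerna--Lions renormalization or an approximation by smooth velocity fields with a stability passage. In the present coupled setting this is harmless, because the Burgers' component is smooth enough (cf. Theorem \ref{ctsexst} and the regularity $u \in L^\infty(0,T;H^1(I))\cap L^2(0,T;H^2(I))$) for the classical flow to be well defined; the only quantitative input actually needed is the finiteness of $r$, which is exactly where the $L^1(0,T;L^\infty(I))$ hypothesis on $u$ is used.
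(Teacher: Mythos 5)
Your proposal is correct, and it is essentially the same argument as the one the paper relies on: the paper gives no proof of Lemma \ref{rhoinfty}, deferring to \cite[Proposition 4.6]{han2019uniqueness}, and that proof is exactly your characteristics argument — the representation $f(t,x,v)=e^{t}f_0(X(0),V(0))$ coming from $\nabla_{(x,v)}\cdot b=-1$, the explicit integration of $\dot V=u-V$ showing $|V(0)-e^{t}v|\le e^{T}\|u\|_{L^1(0,T;L^\infty(I))}=:r$, and the pointwise majorization by $\sup_{C^r_{t,v}}f_0$ followed by integration in $v$. Your closing remark on justifying the flow under the stated regularity (and its harmlessness given the actual regularity of $u$ in the coupled system) is also apt and consistent with the paper's framework.
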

	
For completeness, the existence of a unique strong solution to  the problem \eqref{eq:continuous-model}-\eqref{contburger} 
is discussed in  the Appendix \ref{exststrong}.

	\section{Semi-discrete scheme}\label{dg}
	
	This section deals with a semi-discrete scheme to approximate solutions to \eqref{eq:continuous-model}-\eqref{contburger} and with some properties of the said discrete system. Note that, for a compactly supported initial datum $f_0$, the solution $f(t,x,v)$ has compact support. Therefore, without loss of generality, we assume that there is $M > 0$ such that for $v \in [-M,M] =: J$ and $t \in (0,T],\,\, \mbox{supp}f(t,x,v) \subset \Omega = I \times J$. 
	
	Let $I_h = \{I_i\}_{i=1}^{N_x}$ and $J_h = \{J_j\}_{j=1}^{N_v}$ be the partitions of intervals $I$ and $J$, respectively.
	Let $\mathcal{T}_{h}$ be defined as the Cartesian product of these two partitions, i.e.
	\begin{align*}
		\mathcal{T}_{h} = \left\{ T_{ij} = I_i \times J_j : 1 \leq i \leq N_x, 1 \leq j \leq N_v  \right\},
	\end{align*}
	with
	\[
	I_i = [x_{i - \frac{1}{2}},x_{i+\frac{1}{2}}] \quad \forall \,\, 1 \leq i \leq N_x; \quad J_j = [v_{j - \frac{1}{2}}, v_{j + \frac{1}{2}}] \, \quad\forall \,\, 1 \leq j \leq N_v.
	\]
	The mesh sizes $h_x$ and $h_v$ relative to the above partition are defined as follows: 
   \[
   0 < h_x = \max_{1 \leq i \leq N_x} h_i^x, \quad \mbox{where} \quad
   h_i^x = x_{i+\frac{1}{2}} - x_{i-\frac{1}{2}},
   \]
   and
	\[
	 0 < h_v = \max_{1 \leq j \leq N_v}h_j^v, \quad \mbox{where} \quad h_j^v = v_{j+\frac{1}{2}} - v_{j - \frac{1}{2}}.
	\]
	The mesh size of the partition $\mathcal{T}_h$ is defined as $h := \max(h_x,h_v)$. Here the mesh is assumed to be regular and quasi-uniform in the sense that there exist positive constants $c_1, c_2, c_1', c_2'$ such that the ratio of maximal and minimal mesh sizes stay bounded during mesh refinement in the following sense: 
	\[
	c_1 \leq \frac{h_x}{h_i^x} \leq c_2 \quad \mbox{and} \quad c_1' \leq \frac{h_v}{h_j^v} \leq c_2', \qquad \forall \quad 1 \leq i \leq N_x, 1 \leq j \leq N_v.
	\] 
	The set of all vertical edges  and all horizontal edges are denoted by $\Gamma_x$ and $\Gamma_v$, respectively,
	\[
	\Gamma_x := \bigcup\limits_{i,j}\left\{\{x_{i-\frac{1}{2}}\}\times J_j\right\}\quad \mbox{and} \quad \Gamma_v := \bigcup\limits_{i,j}\left\{I_i \times \{v_{j-\frac{1}{2}}\}\right\}.
	\]
	We denote the collection of all edges by $\Gamma_h := \Gamma_x \cup \Gamma_v$.
	We define the discontinuous finite element spaces for approximating $(u,f)$ as follows:
	\begin{equation*}
		\begin{aligned}
			X_h &:= \{\psi \in L^2(I) : \psi \in \mathbb{P}^k(I_i), \,\,\,\,i = 1,\dots,N_x\},
		\\
		V_h &:= \{\psi \in L^2(J) : \psi \in \mathbb{P}^k(J_j),\, \, \,\,j = 1,\dots,N_v\},
	\\
		\mathcal{Z}_h &:= \left\{ \phi \in L^2(\Omega) : \phi \in \mathbb{Q}^k(T_{ij}),\,  \, i = 1,\dots, N_x;\,\,j = 1, \dots, N_v \right\},
	\end{aligned}
\end{equation*}
	where $\mathbb{P}^k$ is the space of scalar polynomials of degree at most $k$ and $\mathbb{Q}^k(T_{ij})$ is the space of tensor product of polynomials of degrees at most $k$ in each variable.
	
	Below, we define the jump and average values of a function at nodal points. Let $\left(\phi_h\right)^+_{i+\frac{1}{2},v}$ and $\left(\phi_h\right)^-_{i+\frac{1}{2},v}$ be the values of $\phi_h$ at $\left(x_{i+\frac{1}{2}},v\right)$ from the right cell $I_{i+1}\times J_j$ and from the left cell $I_i\times J_j$, respectively. More precisely
	\[
	\left(\phi\right)^+_{i+\frac{1}{2},v} = \lim_{\eth \to 0^+} \phi_h\left(x_{i+\frac{1}{2}}+\eth,v\right) \quad \mbox{and} \quad \left(\phi\right)^-_{i+\frac{1}{2},v} = \lim_{\eth \to 0^+} \phi_h\left(x_{i+\frac{1}{2}}-\eth,v\right).
	\]
	Similarly, we set $\left(\phi_h\right)^+_{x,j+\frac{1}{2}}$ and $\left(\phi_h\right)^-_{x,j-\frac{1}{2}}$.
	  The jump $[\![ \cdot ]\!]$ and average $\{\cdot\}$ of $\phi_h$ at $(x_{i+\frac{1}{2}},v),\,\,\forall\, v \in J_j$ are defined by
	\begin{align*}
		[\![\phi_h]\!]_{i+\frac{1}{2},v} &:= \left(\phi_h\right)^+_{i+\frac{1}{2},v} - \left(\phi_h\right)^-_{i+\frac{1}{2},v} \quad\forall \, \phi_h \in \mathcal{Z}_h,
		\\
		\{\phi_h\}_{i+\frac{1}{2},v} &:= \frac{1}{2}\left(\left(\phi_h\right)^+_{i+\frac{1}{2},v} + \left(\phi_h\right)^-_{i+\frac{1}{2},v}\right) \quad\forall \, \phi_h \in \mathcal{Z}_h.
	\end{align*}
Similarly, one can define jump and average at $(x,v_{j+1/2}),\,\,\forall\, x \in I_i$.

	\textbf{Discrete norm:} We define the following discrete semi-norms and norms: 
	\[
	|w|_{m,\mathcal{T}_h} = \left(\sum_{R \in \mathcal{T}_h}|w|^2_{m,R}\right)^\frac{1}{2}, \,\|w\|_{m,\mathcal{T}_h} = \left(\sum_{R \in \mathcal{T}_h}\|w\|^2_{m,R}\right)^\frac{1}{2} \quad \forall\,w \in H^m(\mathcal{T}_h),\,\, m \geq 0,
	\]
	\[
	\|w\|_{\infty,\mathcal{T}_h} = \sup_{R \in \mathcal{T}_h}\|w\|_{L^{\infty}(R)},\quad \|w\|_{L^p(\mathcal{T}_h)} = \left(\sum_{R \in \mathcal{T}_h}\|w\|^p_{L^p(R)}\right)^\frac{1}{p},\,\,\forall\,w \in L^p(\mathcal{T}_h), 
	\]
	for all $1 \leq p < \infty$.
	
	Next, we recall some standard estimates which are frequently used in our analysis:\\
	\textbf{Inverse inequality:} (see \cite[Lemma 1.44, p. 26]{2}) If $w_h \in \mathbb{P}^k(I_i)$, then
	\begin{equation}\label{inverseeqn}
		\|\partial_xw_h\|_{0,I_i} \leq Ch_x^{-1}\|w_h\|_{0,I_i}.
	\end{equation} 
	\textbf{Trace inequality:} (see \cite[Lemma 1.46, p. 27]{2}) For $w_h \in \mathbb{P}^k(I_i)$,
	\begin{equation}\label{traceeqn}
		\|w_h\|_{0,\partial I_i} \leq Ch_x^{-\frac{1}{2}}\|w_h\|_{0,I_i}.
	\end{equation}
\textbf{Norm comparison:} (see \cite[Lemma 1.50, p. 29]{2}) Let $1\leq p,q \leq \infty$ and $w_h \in \mathbb{P}^k(I_i)$. Then,
\begin{equation}\label{normcompeqn}
	\|w_h\|_{L^p(I_i)} \leq Ch_x^{\frac{1}{p}-\frac{1}{q}}\|w_h\|_{L^q(I_i)}.
\end{equation}
	
	\subsection{LDG formulation}
	
	We rewrite equation \eqref{contburger} by introducing an auxiliary variable $w = \sqrt{\epsilon}\,\partial_x u$ as follows:
	\begin{equation}\label{ctsbur}
		\partial_t u + \frac{1}{2}\partial_x u^2 - \sqrt{\epsilon}\,\partial_x w + \rho u = \rho V,
	\end{equation} 
	\begin{equation}\label{ctsburg}
		w - \sqrt{\epsilon}\, \partial_x u = 0.
	\end{equation}
	This  helps to devise  LDG scheme for the Burgers' equation. We denote by $\left(u_h(t),w_h(t)\right) \in X_h \times X_h$, a discrete approximation for $\left(u(t),w(t)\right)$ for all $t \in [0,T]$ and we denote by $f_h(t) \in \mathcal{Z}_h$, a discrete approximation for $f(t)$ for all $t \in [0,T]$. As in the continuum setting, we set discrete local density and discrete momentum by 
	\begin{equation}\label{rh}
		\rho_h = \sum_{j = 1}^{N_v}\int_{J_j}f_h\,{\rm d}v \qquad \mbox{and} \qquad
		\rho_hV_h = \sum_{j = 1}^{N_v}\int_{J_j}vf_h\,{\rm d}v,
	\end{equation}
respectively. Our discrete problem is to seek
 $(u_h(t),w_h(t),f_h(t)) \in X_h \times X_h \times \mathcal{Z}_h$, for $t \in [0,T]$ such  that
	\begin{equation}{\label{bh}}
		\left(\frac{\partial f_h}{\partial t},\psi_h\right) + \mathcal{B}_{h}(u_h ; f_{h},\psi_{h}) = 0  \hspace{2mm} \forall \,\,\psi_{h} \in \mathcal{Z}_h,
	\end{equation}
\begin{equation}\label{ch}
	\begin{aligned}
		\left(\frac{\partial u_h}{\partial t}, \phi_h\right) + a_h(u_h,\phi_h) + \sqrt{\epsilon} \,b_h(w_h,\phi_h) &+ \left(\rho_h u_h, \phi_h\right) 
		\\
		&= \left(\rho_h V_h, \phi_h\right) \quad \forall\,\, \phi_h \in X_h,
	\end{aligned}
\end{equation}
\begin{equation}\label{ch1}
	\left(w_h,q_h\right) + \sqrt{\epsilon}\,b_h(u_h,q_h) = 0 \quad \forall \,\, q_h \in X_h,
\end{equation}
	with $f_h(0) = f_{0h} \in \mathcal{Z}_h$ and $u_h(0) = u_{0h} \in X_h$ to be defined later.	
	In \eqref{bh}, 
	\begin{equation}
		\mathcal{B}_h(u_h;f_h,\psi_h) := \sum_{i = 1}^{N_x}\sum_{j=1}^{N_v}\mathcal{B}_{ij}^h(u_h;f_h,\psi_h),
	\end{equation}
	with
	\begin{equation}\label{bhdef}
		\begin{aligned}
			\mathcal{B}_{ij}^h(u_h;f_{h},\psi_{h}) & :=  - \int_{T_{ij}}vf_h\partial_x\psi_{h} \, {\rm d}v\, {\rm d}x - \int_{T_{ij}}f_h(u_h - v)\partial_v\psi_h \, {\rm d}v\, {\rm d}x 
			\\
			& \quad + \int_{J_j}\left[\left(\widehat{vf_h}\psi_h^-\right)_{i+1/2,v} - \left(\widehat{vf_h}\psi_h^+\right)_{i-1/2,v}\right]\, {\rm d}v
			\\
			& \quad +\int_{I_i}\left[\left(\reallywidehat{(u_h - v)f_h}\psi_h^-\right)_{x,j+1/2} - \left(\reallywidehat{(u_h - v)f_h}\psi_h^+\right)_{x,j-1/2}\right]\, {\rm d}x
		\end{aligned}
	\end{equation}
	wherein the generalized numerical fluxes are
   \begin{equation}\label{flux}
   	  \left\{
   	\begin{aligned}
   	\widehat{vf_h}\big\vert_{\{x_{i-1/2}\}\times J_j} &:= \left\{
   	\begin{aligned}
   		& \left( 1 - \lambda_1\right)vf_h^+ + \lambda_1vf_h^- \quad \mbox{if}\quad v \geq 0 
   		\\
   		& \left( 1 - \lambda_1\right)vf_h^- + \lambda_1vf_h^+ \quad \mbox{if} \quad v < 0 
   	\end{aligned}
   \right.
   \\
   &= \left\{vf_h\right\} + \left(\frac{1 - 2\lambda_1}{2}\right)\left\vert v\right\vert[\![f_h]\!] \hspace{3mm} \text{on}\hspace{2mm} \{x_{i-1/2}\}\times J_j ,
   \\
   \reallywidehat{(u_h - v)f_h}\big\vert_{I_i \times \{v_{j-1/2}\}} &:= \left\{
   \begin{aligned}
   	 &\left(1 - \lambda_2\right)\left(u_h - v\right)f_h^+ + \lambda_2\left(u_h - v\right)f_h^- \quad\mbox{if}\quad \left(u_h - v\right) \geq 0
   	 \\
   	 &\left(1 - \lambda_2\right)\left(u_h - v\right)f_h^- + \lambda_2\left(u_h - v\right)f_h^+ \quad\mbox{if}\quad \left(u_h - v\right) < 0
   \end{aligned}
\right.
\\
&= \left\{\left(u_h - v\right)f_h\right\} + \left(\frac{1 - 2\lambda_2}{2}\right)\left\vert u_h - v\right\vert[\![f_h]\!] \hspace{3mm} \text{on}\hspace{2mm} I_i\times \{v_{j-1/2}\},
\end{aligned}
\right.
   \end{equation}
with $\lambda_1,\lambda_2 > 1/2$. We define the numerical fluxes on the boundary $\partial\Omega$ by
\[
\left(\widehat{vf_h}\right)_{1/2,v} = \left(\widehat{vf_h}\right)_{N_x+1/2,v}, \qquad \left(\reallywidehat{(u_h - v)f_h}\right)_{x,1/2} = \left(\reallywidehat{(u_h - v )f_h}\right)_{x,N_v+1/2} = 0, 
\]
for all $ (x,v) \in I_i \times J_j$ for $1 \leq i \leq N_x, 1 \leq j \leq N_v$.
 \begin{rem}
		Even though classical purely upwind fluxes are employed in DG schemes for linear hyperbolic equations, it is not easy to define such fluxes in the presence of variable coefficients. Lately, generalized numerical fluxes similar to \eqref{flux} have been used in such scenarios, thanks to the simplicity in their definition \cite{liu2020optimal}. Furthermore, such generalized fluxes provide more flexibility in dealing with small viscosity coefficient in the viscous Burgers' equation, see, for  some comments in (ii)  of the   Observations in the  Section 5.
\end{rem}
  In \eqref{ch}-\eqref{ch1},
	\begin{equation}\label{ah}
		a_h(u_h,\phi_h) := -\sum_{i = 1}^{N_x}\int_{I_i}\frac{u_h^2}{2}\partial_x\phi_h\,{\rm d}x - \sum_{i = 0}^{N_x-1}\left(\frac{\widehat{u_h^2}[\![\phi_h]\!]}{2}\right)_{i+1/2}
	\end{equation}
and
	\begin{equation}\label{b_h}
		b_h(w_h,\phi_h) := \sum_{i = 1}^{N_x}\int_{I_i}w_h\partial_x \phi_h \,{\rm d}x + \sum_{i = 0}^{N_x-1}\left(\widehat{w_h}[\![\phi_h]\!]\right)_{i+1/2}
	\end{equation}
	with the numerical fluxes 
	\begin{equation}\label{fu2}
		\widehat{u_h^2} := \frac{1}{3}\left(\left(u_h^+\right)^2 + u_h^+u_h^- + \left(u_h^-\right)^2\right),
	\end{equation}
	\begin{equation}\label{fw}
		\widehat{w_h} := \left(1 - \lambda\right)w_h^- + \lambda w_h^+ = \{w_h\} + \left(\frac{2\lambda - 1}{2}\right)[\![w_h]\!] ,
	\end{equation}
and
	\begin{equation}\label{fu}
		\widehat{u_h} := \lambda u_h^- + \left(1 - \lambda\right)u_h^+ = \{u_h\} + \left(\frac{1 - 2\lambda}{2}\right)[\![u_h]\!],
	\end{equation}
	where $\lambda \geq 1/2$. Note that the numerical fluxes given in \eqref{fw}-\eqref{fu} are in a generalized sense. Note further that the numerical fluxes at the end points of $I$ are defined by $\left(u_h\right)^+_{\frac{1}{2}} = \left(u_h\right)^+_{N_x+\frac{1}{2}}, \left(u_h\right)^-_{\frac{1}{2}} = \left(u_h\right)^-_{N_x + \frac{1}{2}}, \left(w_h\right)^+_{\frac{1}{2}} = \left(w_h\right)^+_{N_x + \frac{1}{2}}$ and $\left(w_h\right)^-_{\frac{1}{2}} = \left(w_h\right)^-_{N_x + \frac{1}{2}}$. The $\widehat{u_h^2}$ is not a generalized numerical flux and it is referred as central flux which was introduced by Liu et al. in \cite{liu2015}.   

	\begin{rem}
		The author of \cite{ploymaklam2019local} has run a large time simulation to understand the stability of a numerical scheme stemming out of the central flux \eqref{fu2} while comparing it with the stability of a numerical scheme associated with the following Lax-Friedrich flux: 
		\[
		\widehat{u_h^2} = \frac{1}{2}[(u_h^+)^2 + (u_h^-)^2 - 2\max|u| (u_h^+ - u_h^-)],
		\]   
		see, section $3$ (Example $3.3$) of \cite{ploymaklam2019local} for more details. The author observes that the scheme defined by using the central fluxes is more stable than the scheme that uses the Lax-Friedrich flux.
	\end{rem}

We observe below, certain properties of the bilinear form $b_h(w_h,\phi_h)$:
	\begin{itemize}
		\item For $w_h \in X_h$,
		\begin{equation}
			\begin{aligned}
				b_h(w_h,w_h) &= \sum_{i = 1}^{N_x}\int_{I_i}w_h\partial_x w_h \,{\rm d}x + \sum_{i = 0}^{N_x-1}\left(\widehat{w_h}[\![w_h]\!]\right)_{i+1/2} 
				\\
				& = \sum_{i = 0}^{N_x-1}\left(\widehat{w_h}[\![w_h]\!] - \frac{1}{2}[\![w_h^2]\!]\right)_{i+1/2}
				= \left(\lambda - \frac{1}{2}\right)\sum_{i = 0}^{N_x-1}[\![w_h]\!]^2_{i+1/2}.
			\end{aligned}
		\end{equation}
		\item For $w_h, \phi_h \in X_h$, there holds
		\begin{equation}\label{bhp}
			\begin{aligned}
				b_h(w_h,\phi_h) &= \sum_{i = 1}^{N_x}\int_{I_i}w_h\partial_x\phi_h \,{\rm d}x + \sum_{i = 0}^{N_x-1}\left(\widehat{w_h}[\![\phi_h]\!]\right)_{i+1/2}
				\\
				&= -\sum_{i = 1}^{N_x}\int_{I_i}\partial_x w_h \,\phi_h\,{\rm d}x - \sum_{i = 0}^{N_x-1}\left([\![w_h\phi_h]\!] - \widehat{w_h}[\![\phi_h]\!]\right)_{i+1/2}
				\\
				&= -b_h(\phi_h,w_h) - \sum_{i = 0}^{N_x-1}\left([\![w_h\phi_h]\!] - \widehat{w_h}[\![\phi_h]\!] - \widehat{\phi_h}[\![w_h]\!]\right)_{i+1/2}. 
			\end{aligned}
		\end{equation}
	\end{itemize}
		
	Since $X_h\times X_h \times \mathcal{Z}_h$ is finite dimensional, the discrete problem \eqref{bh}-\eqref{ch1} leads to a system of non-linear ODEs coupled with linear algebraic equations which is known as a system of differential algebraic equations. From equation \eqref{ch1} $w_h$ can be written explicitly as a function of $u_h$. On substitution in \eqref{ch} we obtain a system of non-linear ODEs. An application of Picard's theorem shows the existence of a local-in-time unique solution $(u_h,w_h,f_h)$. This solution can be made global-in-time provided we have bounds on the solution in appropriate norms. We shall be commenting on this towards the end of the paper.

	\subsection{Some properties of the discrete solution}\label{dgp}
	
	\begin{lem}[Discrete mass conservation]\label{lem:mass}
		Let $k \geq 0$ and let $f_h$ be the DG approximation to $f$ satisfying $\eqref{bh}$ with $f_{h}(0) = \mathcal{P}_hf_0$, where $\mathcal{P}_h$ is the $L^2$-projection onto $\mathcal{Z}_h$. Then, the following discrete mass conservation property holds: 
		\begin{equation*}
			\begin{aligned}
				\int_{\Omega}f_h(t,x,v)\,{\rm d}x\,{\rm d}v  = \int_{\Omega}f_0(x,v)\,{\rm d}x\,{\rm d}v \quad \forall\,\, t > 0.
			\end{aligned}
		\end{equation*}
	\end{lem}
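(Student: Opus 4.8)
The plan is to exploit the fact that the constant function $\psi_h \equiv 1$ is an admissible test function: since $k \geq 0$, the constants belong to $\mathbb{Q}^k(T_{ij})$ and hence to $\mathcal{Z}_h$. Substituting $\psi_h = 1$ into the weak formulation \eqref{bh} immediately annihilates the two volume integrals in $\mathcal{B}_{ij}^h(u_h;f_h,\psi_h)$, because $\partial_x\psi_h = \partial_v\psi_h = 0$. What survive are only the edge contributions, in which all the one-sided traces $\psi_h^\pm$ equal $1$.

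The first main step is therefore to show that the total sum $\mathcal{B}_h(u_h;f_h,1) = \sum_{i,j}\mathcal{B}_{ij}^h(u_h;f_h,1)$ vanishes. The vertical-edge terms $\int_{J_j}\bigl[(\widehat{vf_h})_{i+1/2,v} - (\widehat{vf_h})_{i-1/2,v}\bigr]\,{\rm d}v$ telescope in the index $i$; after summing $i$ from $1$ to $N_x$ only the endpoint fluxes at $i=1/2$ and $i=N_x+1/2$ remain, and these cancel by the periodic identification $(\widehat{vf_h})_{1/2,v} = (\widehat{vf_h})_{N_x+1/2,v}$. Likewise the horizontal-edge terms $\int_{I_i}\bigl[(\reallywidehat{(u_h-v)f_h})_{x,j+1/2} - (\reallywidehat{(u_h-v)f_h})_{x,j-1/2}\bigr]\,{\rm d}x$ telescope in $j$, and the surviving endpoint contributions vanish because the numerical flux was prescribed to be zero on the top and bottom of $J$, namely $(\reallywidehat{(u_h-v)f_h})_{x,1/2} = (\reallywidehat{(u_h-v)f_h})_{x,N_v+1/2} = 0$. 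Hence $\mathcal{B}_h(u_h;f_h,1) = 0$.

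With this identity the weak form collapses to $\bigl(\partial_t f_h,1\bigr) = 0$, that is $\frac{{\rm d}}{{\rm d}t}\int_\Omega f_h\,{\rm d}x\,{\rm d}v = 0$, so the discrete mass is constant in time. Finally I would match the initial value: since $f_h(0) = \mathcal{P}_h f_0$ with $\mathcal{P}_h$ the $L^2$-projection onto $\mathcal{Z}_h$, and since $1 \in \mathcal{Z}_h$, the defining orthogonality $(\mathcal{P}_h f_0 - f_0, 1) = 0$ yields $\int_\Omega f_h(0)\,{\rm d}x\,{\rm d}v = \int_\Omega f_0\,{\rm d}x\,{\rm d}v$. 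Combining the two statements proves the claim.

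There is no deep obstacle here, as this is a direct conservation identity rather than an estimate. The only points requiring care are verifying that $\psi_h \equiv 1$ genuinely lies in the test space (which hinges on $k \geq 0$) and correctly bookkeeping the two telescoping sums so that the periodic boundary condition in $x$ and the zero-flux boundary condition in $v$ are each applied in the right place. I would also emphasize that the argument never invokes the specific values of the generalized flux parameters $\lambda_1,\lambda_2$, so discrete mass conservation holds for the entire family of fluxes in \eqref{flux}.
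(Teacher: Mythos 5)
Your proposal is correct and follows essentially the same route as the paper: the paper tests \eqref{bh} with the indicator function of each cell $T_{ij}$ and then sums over all cells, which by linearity is exactly your choice $\psi_h \equiv 1$, and both arguments then rest on the vanishing volume terms, the telescoping of the single-valued numerical fluxes with periodicity in $x$ and zero flux in $v$, and the $L^2$-projection orthogonality $(\mathcal{P}_h f_0 - f_0, 1) = 0$ for the initial mass.
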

	
	\begin{proof}
		From the definition of the $L^2$-projection, it follows that
		\begin{equation*}{\label{l2}}
			\begin{aligned}
				\sum_{i,j}\int_{T_{ij}} f_h(0)\,{\rm d}x\,{\rm d}v = \sum_{i,j}\int_{T_{ij}}\mathcal{P}_hf_0\,{\rm d}x\,{\rm d}v = \sum_{i,j}\int_{T_{ij}} f_0\,{\rm d}x\,{\rm d}v.
			\end{aligned}
		\end{equation*}
		Let us fix an arbitrary element $T_{ij}$ and let us take a test function $\psi_h$ in $\eqref{bh}$ such that $\psi_h = 1$ in $T_{ij}$ and $\psi_h = 0$ elsewhere. Then, \eqref{bh} reduces to 
		\begin{equation*}
			\begin{aligned}
				\int_{T_{ij}} \frac{\partial f_h}{\partial t}\,{\rm d}x\,{\rm d}v + \mathcal{B}_{ij}^h(u_h;f_h,1) &=0.
			\end{aligned}
		\end{equation*} 
		From the definition of $\mathcal{B}_{ij}^h$, we arrive at
		\begin{equation*}
			\begin{aligned}
				\int_{T_{ij}}\frac{\partial f_h}{\partial t}\,{\rm d}x\,{\rm d}v  
				&+ \int_{J_j}\left[\left(\widehat{vf_h}\right)_{i+1/2,v} - \left(\widehat{vf_h}\right)_{i-1/2,v}\right]\,{\rm d}v		 	
				\\
				&+ \int_{I_i}\left[\left(\reallywidehat{(u_h-v)f_h}\right)_{x,j+1/2} - \left(\reallywidehat{(u_h-v)f_h}\right)_{x,j-1/2}\right]\,{\rm d}x = 0.
			\end{aligned}
		\end{equation*}
		Note that the choice of $T_{ij}$ was done arbitrarily. Furthermore, the second and the third integral terms on the left hand side holds true for all $i,j$. Since boundary conditions in $x$ are periodic and the support in $v$ is compact, taking summation over all $i,j$ and integration in time yields the desired result. 
	\end{proof}
	As a consequence of the above lemma, for any given non-negative initial data, we have
	\[
	\int_\Omega f_h(t,x,v){\rm d}x\,{\rm d}v \geq 0 \quad\mbox{and}\quad \int_{I}\rho_h(t,x){\rm d}x \geq 0.
	\]
	

	\begin{lem}[Discrete total momentum conservation]\label{lem:momentum}
		Let $(f_h,u_h) \in \mathcal{C}^1([0,T];\mathcal{Z}_h\times X_h)$ be the DG-LDG approximation obtained as a solution of \eqref{bh} and \eqref{ch}. Then, for $k \geq 1$,
		\begin{equation*}
			\int_{\Omega}vf_h(t,x,v)\,{\rm d}x\,{\rm d}v + \int_Iu_h(t,x)\,{\rm d}x = \int_{\Omega}vf_0(x,v)\,{\rm d}x\,{\rm d}v + \int_I u_0(x)\,{\rm d}x \quad \forall\,\, t \geq 0.
		\end{equation*}
	\end{lem}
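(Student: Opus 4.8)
The plan is to mimic, at the discrete level, the continuous momentum conservation in Lemma~\ref{lem:cons}(iii): I differentiate the discrete total momentum $\int_\Omega v f_h\,{\rm d}x\,{\rm d}v + \int_I u_h\,{\rm d}x$ in time and show the derivative is zero. The two natural test functions are $\psi_h = v$ in the Vlasov equation \eqref{bh} and $\phi_h = 1$ in the Burgers' equation \eqref{ch}. Both are admissible, and this is exactly where $k \geq 1$ is needed: since $k \geq 1$, the (globally continuous) function $v$, being of degree one in $v$ and degree zero in $x$, lies in $\mathcal{Z}_h$, while the constant $1$ lies in $X_h$ for any $k \geq 0$. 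With these choices, \eqref{bh} yields $\frac{{\rm d}}{{\rm d}t}\int_\Omega v f_h\,{\rm d}x\,{\rm d}v = -\mathcal{B}_h(u_h;f_h,v)$, and \eqref{ch} yields a formula for $\frac{{\rm d}}{{\rm d}t}\int_I u_h\,{\rm d}x$; adding the two and integrating in time will give the claim.

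First I would dispose of the Burgers' side. Taking $\phi_h = 1$ in \eqref{ah} and \eqref{b_h}, every term carries either $\partial_x\phi_h$ or a jump $[\![\phi_h]\!]$, both of which vanish for a constant; hence $a_h(u_h,1) = 0$ and $b_h(w_h,1) = 0$. Therefore \eqref{ch} collapses to $\frac{{\rm d}}{{\rm d}t}\int_I u_h\,{\rm d}x = (\rho_h V_h,1) - (\rho_h u_h,1)$, i.e. only the drag contributions survive.

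The main work, and the step that needs the most care, is the evaluation of $\mathcal{B}_h(u_h;f_h,v)$. In $\mathcal{B}_{ij}^h$ of \eqref{bhdef} the choice $\psi_h = v$ annihilates the $x$-transport volume term (since $\partial_x v = 0$) and converts the $v$-transport volume term into $-\int_{T_{ij}} f_h(u_h - v)\,{\rm d}x\,{\rm d}v$ (since $\partial_v v = 1$). For the flux terms I would reorganize the double sum $\sum_{i,j}$ into a sum over edges: at each interior vertical edge the two neighbouring cells combine the single-valued numerical flux $\widehat{vf_h}$ against $-[\![\psi_h]\!]$, and similarly at each interior horizontal edge the flux $\reallywidehat{(u_h - v)f_h}$ appears against $-[\![\psi_h]\!]$. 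Because $\psi_h = v$ is globally continuous, $[\![\psi_h]\!] = 0$ at every interior edge, so all interior flux contributions vanish; the $x$-boundary edges cancel by the periodicity convention on the fluxes (equivalently, treating $I$ as a torus makes them interior), and the $v$-boundary edges vanish since the fluxes there are set to zero in the definition following \eqref{flux}. Consequently $\mathcal{B}_h(u_h;f_h,v) = -\int_\Omega f_h(u_h - v)\,{\rm d}x\,{\rm d}v$.

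Finally I would rewrite $\int_\Omega f_h(u_h - v)\,{\rm d}x\,{\rm d}v$ via the definitions \eqref{rh}: since $u_h$ is independent of $v$, $\int_\Omega f_h u_h\,{\rm d}x\,{\rm d}v = \int_I \rho_h u_h\,{\rm d}x = (\rho_h u_h,1)$ and $\int_\Omega v f_h\,{\rm d}x\,{\rm d}v = \int_I \rho_h V_h\,{\rm d}x = (\rho_h V_h,1)$. Hence $\frac{{\rm d}}{{\rm d}t}\int_\Omega v f_h\,{\rm d}x\,{\rm d}v = (\rho_h u_h,1) - (\rho_h V_h,1)$, which is precisely the negative of the Burgers' contribution found above. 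Adding the two derivatives gives $\frac{{\rm d}}{{\rm d}t}\big[\int_\Omega v f_h\,{\rm d}x\,{\rm d}v + \int_I u_h\,{\rm d}x\big] = 0$, so the discrete total momentum is constant in time. Integrating from $0$ to $t$ and noting that the discrete initial data reproduce the relevant moments, namely $\int_\Omega v f_h(0)\,{\rm d}x\,{\rm d}v = \int_\Omega v f_0\,{\rm d}x\,{\rm d}v$ (since $v \in \mathcal{Z}_h$) and $\int_I u_h(0)\,{\rm d}x = \int_I u_0\,{\rm d}x$ (since $1 \in X_h$) for the projections used to define $f_h(0)$ and $u_h(0)$, yields the stated identity.
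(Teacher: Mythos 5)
Your proposal is correct and follows essentially the same route as the paper's own proof: test with $\psi_h = v$ in \eqref{bh} and $\phi_h = 1$ in \eqref{ch}, observe that the transport/flux terms vanish (by continuity of $v$, periodicity in $x$, and the zero fluxes at the $v$-boundary), and note that the surviving drag terms $\pm\left(\rho_h V_h - \rho_h u_h, 1\right)$ cancel upon addition. Your write-up is in fact more detailed than the paper's, in particular in justifying $a_h(u_h,1) = b_h(w_h,1) = 0$, the edge-wise cancellation of the numerical fluxes, and the consistency of the discrete initial data, all of which the paper leaves implicit.
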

	
	\begin{proof}
		Choose $\phi_h = 1 \in X_h$ in \eqref{ch}, and use the definitions \eqref{rh} to obtain
		\begin{equation*}\label{uu}
			\sum_{i}\int_{I_i}\frac{\partial u_h}{\partial t}\,{\rm d}x = \sum_{i}\int_{I_i}\left(\rho_hV_h - \rho_hu_h\right)\,{\rm d}x = \sum_{i,j}\int_{T_{ij}}\left( v - u_h\right)f_h\,{\rm d}x\,{\rm d}v.
		\end{equation*}
		Now, putting $\psi_h = v \in \mathcal{Z}_h$ in equation \eqref{bh}, we arrive at
		\begin{equation*}
			\sum_{i,j}\int_{T_{ij}}\left(\frac{\partial}{\partial t}(vf_h) - \left(u_h - v\right)f_h\right)\,{\rm d}x\,{\rm d}v = 0.
		\end{equation*}
		Adding the above two expressions followed by an integration in time yields the result.
	\end{proof}


	\begin{lem}[Discrete total energy identity]
		Let $k \geq 2$ and let $(f_h,u_h,w_h) \in \mathcal{C}^1(0,T;\mathcal{Z}_h)\times \mathcal{C}^1(0,T;X_h)\times \mathcal{C}^0(0,T;X_h)$ be the DG-LDG approximate solution of \eqref{bh} and \eqref{ch}-\eqref{ch1}. Then, 
		\begin{equation*}
			\begin{aligned}
				&\frac{1}{2}\left(\sum_{i,j}\int_{T_{ij}} v^2\,f_h\,{\rm d}x\,{\rm d}v + \sum_{i=1}^{N_x}\int_{I_i}\,u_h^2\,{\rm d}x \right) + \sum_{i=1}^{N_x}\int_{0}^{t}\int_{I_i}w^2_h\,{\rm d}x 
				\\
				&+ \sum_{i,j}\int_{0}^{t}\int_{T_{ij}}\left(u_h - v\right)^2f_h\,{\rm d}x\,{\rm d}v 
				= \frac{1}{2}\left(\sum_{i,j}\int_{T_{ij}}v^2\,f_h(0)\,{\rm d}x\,{\rm d}v + \sum_{i=1}^{N_x}\int_{I_i}u_h^2(0)\,{\rm d}x\right) \,\, \forall \,\, t\geq 0.
			\end{aligned}
		\end{equation*}
	\end{lem}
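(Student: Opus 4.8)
The plan is to test the three equations \eqref{bh}, \eqref{ch}, \eqref{ch1} of the semi-discrete scheme with carefully chosen discrete functions and to add the resulting identities. Concretely, I would take $\psi_h = v^2/2$ in \eqref{bh}, $\phi_h = u_h$ in \eqref{ch}, and $q_h = w_h$ in \eqref{ch1}. The choice $\psi_h = v^2/2$ is admissible precisely because $k \geq 2$, so that $v^2/2$ restricted to each cell $T_{ij}$ lies in $\mathbb{Q}^k(T_{ij})$; moreover $v^2/2$ is globally continuous, a fact exploited below. With these choices the time-derivative terms immediately produce $\frac{1}{2}\frac{d}{dt}\sum_{i,j}\int_{T_{ij}} v^2 f_h$ and $\frac{1}{2}\frac{d}{dt}\sum_i \int_{I_i} u_h^2$.

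Next I would dispose of the Burgers' spatial operators. For the convection term I expect $a_h(u_h, u_h) = 0$: writing $\frac{u_h^2}{2}\partial_x u_h = \partial_x(u_h^3/6)$, integrating by parts cell-by-cell, and using periodicity shows that the volume contributions telescope to $\frac{1}{6}\sum_i [\![u_h^3]\!]_{i+1/2}$, while the algebraic identity $\widehat{u_h^2}[\![u_h]\!] = \frac{1}{3}[\![u_h^3]\!]$, a direct consequence of the central flux \eqref{fu2}, shows that the flux contributions cancel this exactly. For the diffusion terms I would combine $\sqrt{\epsilon}\,b_h(w_h, u_h)$ from \eqref{ch} with \eqref{ch1}. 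Using the integration-by-parts identity \eqref{bhp} together with the flux definitions \eqref{fw}-\eqref{fu} and the elementary relation $[\![w_h u_h]\!] = \{w_h\}[\![u_h]\!] + \{u_h\}[\![w_h]\!]$, one checks that the boundary remainder in \eqref{bhp} vanishes, so that $b_h(w_h, u_h) = -b_h(u_h, w_h)$. Substituting $\sqrt{\epsilon}\,b_h(u_h, w_h) = -(w_h, w_h)$ from \eqref{ch1} then converts the diffusion term into $+\sum_i\int_{I_i} w_h^2$.

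For the Vlasov contribution I would compute $\mathcal{B}_h(u_h; f_h, v^2/2)$. Since $\partial_x(v^2/2) = 0$ and $\partial_v(v^2/2) = v$, the volume part of \eqref{bhdef} reduces to $-\sum_{i,j}\int_{T_{ij}} v(u_h - v) f_h$. The crucial point is that all the numerical-flux boundary terms drop out: because $v^2/2$ is continuous across every vertical and horizontal edge, the factors $\psi_h^{\pm}$ agree from both sides, and after summing over neighbouring cells the single-valued fluxes $\widehat{vf_h}$ and $\reallywidehat{(u_h - v)f_h}$ telescope; periodicity in $x$ together with the vanishing of the flux at $v = \pm M$ by compact support annihilate the remaining boundary contributions. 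Hence \eqref{bh} yields $\frac{1}{2}\frac{d}{dt}\sum_{i,j}\int_{T_{ij}} v^2 f_h = \sum_{i,j}\int_{T_{ij}}(v u_h - v^2) f_h$.

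Finally I would add the three identities. Recalling that $(\rho_h u_h, u_h) = \sum_{i,j}\int_{T_{ij}} u_h^2 f_h$ and $(\rho_h V_h, u_h) = \sum_{i,j}\int_{T_{ij}} v u_h f_h$ by \eqref{rh}, the collected source terms combine to $\sum_{i,j}\int_{T_{ij}}(2 v u_h - v^2 - u_h^2) f_h = -\sum_{i,j}\int_{T_{ij}}(u_h - v)^2 f_h$, the key perfect-square cancellation mirroring the continuous energy identity. This produces the pointwise-in-time differential identity, and integrating from $0$ to $t$ gives the assertion with an \emph{equality} rather than an inequality. The main obstacle is the verification of the two flux-dependent cancellations, namely $a_h(u_h, u_h) = 0$ and the skew-symmetry $b_h(w_h, u_h) = -b_h(u_h, w_h)$, since both rely delicately on the precise algebraic structure of the central flux \eqref{fu2} and of the generalized fluxes \eqref{fw}-\eqref{fu}; once these are in place, the remainder is bookkeeping.
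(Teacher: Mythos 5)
Your proposal is correct and follows essentially the same route as the paper's proof: testing \eqref{bh} with $\psi_h = v^2/2$, \eqref{ch} with $\phi_h = u_h$, \eqref{ch1} with $q_h = w_h$, killing the convection term via the central-flux identity $\widehat{u_h^2}[\![u_h]\!] = \tfrac{1}{3}[\![u_h^3]\!]$, using the skew-symmetry $b_h(u_h,w_h) + b_h(w_h,u_h) = 0$ from the generalized fluxes, and closing with the perfect-square cancellation of the source terms before integrating in time. The only difference is cosmetic: you verify the two flux cancellations explicitly (which the paper merely asserts), and you substitute $\sqrt{\epsilon}\,b_h(u_h,w_h) = -\|w_h\|_{0,I_h}^2$ into the $u_h$-equation rather than adding the two tested equations and cancelling the pair at once.
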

	
	\begin{proof}
		A choice of \textcolor{blue}{$\psi_h = \frac{v^2}{2} \in \mathcal{Z}_h$} in equation \eqref{bh} yields
		\begin{equation}\label{n}
			\sum_{i,j}\int_{T_{ij}}\left(\frac{v^2}{2}\frac{\partial f_h}{\partial t} + v^2f_h - u_hvf_h\right)\,{\rm d}x\,{\rm d}v = 0.
		\end{equation}
		Choose $\phi_h = u_h$ and $q_h = w_h$ in equation \eqref{ch}-\eqref{ch1} to obtain
		\begin{equation}\label{n1}
			\begin{aligned}
				\frac{1}{2}\frac{{\rm d}}{{\rm d}t}\|u_h\|^2_{0,I_h} + \|w_h\|^2_{0,I_h} &+ a_h(u_h,u_h) + \sqrt{\epsilon}\,b_h(w_h,u_h) 
				\\
				&+ \sqrt{\epsilon}\,b_h(u_h,w_h) + \left(\left(\rho_h u_h - \rho_h V_h\right), u_h\right) = 0.
			\end{aligned}
		\end{equation}
		From equation \eqref{ah}, we obtain
		\begin{equation*}\label{n2}
			\begin{aligned}
				a_h(u_h,u_h) = \sum_{i = 0}^{N_x-1}\left(\frac{[\![u_h^3]\!]}{6} - \frac{\widehat{u_h^2}[\![u_h]\!]}{2}\right)_{i+1/2} = 0,
			\end{aligned}
		\end{equation*}
		thanks to the flux defined in \eqref{fu2} and the periodic boundary condition.\\
		From the fluxes defined in \eqref{fw} and \eqref{fu}, we deduce that
		\begin{equation}\label{bzero}
			b_h(u_h,w_h) + b_h(w_h,u_h) = 0.
		\end{equation}
		Hence, summing the equations \eqref{n} and \eqref{n1} yields
		\begin{equation*}
			\begin{aligned}
				\frac{1}{2}\frac{{\rm d}}{{\rm d}t}\left(\sum_{i,j}\int_{T_{ij}} v^2\,f_h\,{\rm d}x\,{\rm d}v + \sum_{i=1}^{N_x}\int_{I_i}u_h^2\,{\rm d}x\right) &+ \sum_{i=1}^{N_x}\int_{I_i}w_h^2\,{\rm d}x 
				\\
				&+ \sum_{i,j}\int_{T_{ij}}\left(u_h - v\right)^2f_h\,{\rm d}x\,{\rm d}v = 0.
			\end{aligned}
		\end{equation*}
		An integration in time leads to the desired identity.
	\end{proof}
	
	In the discrete case, it is difficult to prove that the total energy dissipates as in \eqref{energy}, because it is hard to show the non-negativity of $f_h$. In the continuum case, this dissipation property plays a crucial role in the proof of well-posedness of the system \eqref{eq:continuous-model}-\eqref{contburger}.


	\begin{lem}\label{fhbd}
		Let $k \geq 0$ and let $f_h$ be the DG approximation to $f,$ satisfying $\eqref{bh}.$ Then, 
		\begin{equation*}
			\max_{t \in [0,T]}\|f_h(t,\cdot)\|_{0,\mathcal{T}_h} \leq e^{\frac{T}{2}}\|f_0\|_{0,\mathcal{T}_h}.
		\end{equation*}
	\end{lem}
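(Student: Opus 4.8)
The plan is to derive an $L^2$ energy identity for the Vlasov part by testing \eqref{bh} with $\psi_h = f_h$ itself and then to control the bilinear form $\mathcal{B}_h(u_h;f_h,f_h)$ from below. Taking $\psi_h = f_h$ in \eqref{bh} gives
\[
\frac{1}{2}\frac{{\rm d}}{{\rm d}t}\|f_h\|^2_{0,\mathcal{T}_h} + \mathcal{B}_h(u_h;f_h,f_h) = 0,
\]
so the whole argument reduces to establishing the energy identity
\[
\mathcal{B}_h(u_h;f_h,f_h) = -\frac{1}{2}\|f_h\|^2_{0,\mathcal{T}_h} + \Theta, \qquad \Theta \geq 0,
\]
where $\Theta$ collects the numerical dissipation coming from the jumps of $f_h$ across interelement edges.

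To obtain this identity I would work cell by cell from \eqref{bhdef}. In the two volume integrals I rewrite $f_h\partial_x f_h = \tfrac12\partial_x(f_h^2)$ and $f_h\partial_v f_h = \tfrac12\partial_v(f_h^2)$ and integrate by parts on each $T_{ij}$. The transport field is $\mathbf{a} = (v,\,u_h - v)$, whose divergence in the $(x,v)$ variables is $\partial_x v + \partial_v(u_h - v) = -1$ (recall $u_h = u_h(x)$ is independent of $v$). Hence the integration by parts produces exactly the zeroth-order term $-\tfrac12\sum_{ij}\int_{T_{ij}} f_h^2 = -\tfrac12\|f_h\|^2_{0,\mathcal{T}_h}$, together with cell-boundary integrals of $\tfrac12(\mathbf{a}\cdot\mathbf{n})f_h^2$.

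The heart of the proof is to combine these cell-boundary integrals with the numerical-flux terms in \eqref{bhdef}--\eqref{flux}. Grouping the contributions edge by edge and reindexing, using periodicity in $x$ and the vanishing of the $v$-fluxes at $v = \pm M$ forced by the compact support, each vertical edge at $x_{i+1/2}$ contributes
\[
\frac{v}{2}[\![f_h^2]\!] - \widehat{vf_h}\,[\![f_h]\!].
\]
With the identity $[\![f_h^2]\!] = 2\{f_h\}[\![f_h]\!]$ and the generalized flux $\widehat{vf_h} = \{vf_h\} + \tfrac{1-2\lambda_1}{2}|v|[\![f_h]\!]$ from \eqref{flux}, the average parts cancel and this collapses to $\tfrac{2\lambda_1-1}{2}|v|[\![f_h]\!]^2 \geq 0$ since $\lambda_1 > 1/2$. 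An identical computation on each horizontal edge $v_{j+1/2}$, now with speed $u_h - v_{j+1/2}$ (single-valued there, as the jump is only in $f_h$) and flux parameter $\lambda_2 > 1/2$, yields $\tfrac{2\lambda_2-1}{2}|u_h - v|[\![f_h]\!]^2 \geq 0$; note that $u_h$ enters only through the non-negative weight $|u_h - v|$ and otherwise cancels, so no bound on $u_h$ is required. Summing over all edges gives $\Theta \geq 0$ and the energy identity above.

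Finally, discarding $\Theta \geq 0$ yields $\frac{{\rm d}}{{\rm d}t}\|f_h\|^2_{0,\mathcal{T}_h} \leq \|f_h\|^2_{0,\mathcal{T}_h}$, so Gr\"onwall's inequality gives $\|f_h(t)\|^2_{0,\mathcal{T}_h} \leq e^t\|f_h(0)\|^2_{0,\mathcal{T}_h}$; since $f_h(0)$ is the $L^2$-projection of $f_0$ (a contraction in $L^2$), one has $\|f_h(0)\|_{0,\mathcal{T}_h} \leq \|f_0\|_{0,\mathcal{T}_h}$. Taking square roots and the maximum over $[0,T]$ gives the claimed bound. I expect the main obstacle to be the edge bookkeeping in the third step, that is, correctly pairing the integration-by-parts boundary terms with the generalized fluxes and extracting the weights $|v|$ and $|u_h - v|$ with the right signs, while the energy identity and the Gr\"onwall step are routine.
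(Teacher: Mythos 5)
Your proposal is correct and follows essentially the same route as the paper's own proof: testing \eqref{bh} with $\psi_h = f_h$, integrating the volume terms by parts (the divergence $\partial_x v + \partial_v(u_h - v) = -1$ producing the $-\tfrac12\|f_h\|^2_{0,\mathcal{T}_h}$ term), pairing the cell-boundary terms with the generalized fluxes via $[\![f_h^2]\!] = 2\{f_h\}[\![f_h]\!]$ to obtain the non-negative jump dissipation $\tfrac{2\lambda_1-1}{2}|v|[\![f_h]\!]^2$ and $\tfrac{2\lambda_2-1}{2}|u_h-v|[\![f_h]\!]^2$, and concluding by Gr\"onwall together with the $L^2$-contractivity of $\mathcal{P}_h$. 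Your write-up is in fact slightly more explicit than the paper's (which compresses the edge bookkeeping and the final Gr\"onwall/projection step into a few words), but there is no substantive difference in approach.
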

	
	\begin{proof}
		Choosing $\psi_h = f_h$ in \eqref{bh}, we obtain	
		\begin{equation*}
			\begin{aligned}
				&\frac{1}{2}\frac{{\rm d}}{{\rm d}t}\|f_h\|^2_{0,\mathcal{T}_h} - \sum_{i,j}\left( \frac{1}{2}\int_{T_{ij}}\left(v\partial_xf_h^2 + (u_h - v)\partial_vf_h^2\right)\,{\rm d}x\,{\rm d}v\right) 
				\\
				&-\sum_{i,j}\int_{J_j}\left(\widehat{v f_h}[\![f_h]\!]\right)_{i-1/2,v}\,{\rm d}v - \sum_{i,j}\int_{I_i}\left(\reallywidehat{(u_h - v)f_h}[\![f_h]\!]\right)_{x,j-1/2}\,{\rm d}x = 0.
			\end{aligned}
		\end{equation*}
		After applying integration by parts in second and third term and using $[\![f_h^2]\!] = 2\{f_h\}[\![f_h]\!]$, it follows that
		\begin{equation}\label{aa}
			\begin{aligned}
				&\frac{1}{2}\frac{{\rm d}}{{\rm d}t}\|f_h\|^2_{0,\mathcal{T}_h} - \frac{1}{2}\|f_h\|^2_{0,\mathcal{T}_h} + \sum_{i,j}\int_{J_j}\left(\frac{2\lambda_1 - 1}{2}\right)\left\vert v\right\vert[\![f_h]\!]_{i-1/2,v}^2\,{\rm d}v 
				\\
				&+ \sum_{i,j}\int_{I_i}\left(\frac{2\lambda_2 - 1}{2}\right)\left(\left\vert u_h - v\right\vert[\![f_h]\!]^2\right)_{x,j-1/2}\,{\rm d}x = 0.
			\end{aligned}
		\end{equation}
		Observe that for $\lambda_1,\lambda_2 > 1/2$, last two terms on the left hand side are non-negative and hence, can be dropped. An integration in time yields the desired result.
	\end{proof}


	For our subsequent use, we shall need the following lemma.
	\begin{lem}\label{rhoh2}
		Let $f$ and $f_h$ be the continuum and the discrete solutions of the Vlasov - viscous Burgers' system, respectively. Further, let $\rho$ be the local density associated with $f$ and $\rho_h$ be the discrete local density defined as in \eqref{rh}. Then, 
		\begin{equation*}
			\|\rho - \rho_h\|_{0,I_h} \leq (2M)^\frac{1}{2}\|f - f_h\|_{0,\mathcal{T}_h}
		\quad \mbox{and} \quad
			\|\rho - \rho_h\|_{\infty,I_h} \leq 2M\|f - f_h\|_{\infty,\mathcal{T}_h}.
		\end{equation*}
		Moreover, 		
		\begin{equation*}
			\|\rho V - \rho_h V_h\|_{0,I_h} \leq 2M\|f - f_h\|_{0,\mathcal{T}_h}.
		\end{equation*}		
	\end{lem}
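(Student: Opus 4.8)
The three estimates all reduce to comparing the velocity integrals defining $\rho,\rho V$ with their discrete counterparts $\rho_h,\rho_h V_h$ over the finite velocity band $J=[-M,M]$, so the central tool will be the Cauchy--Schwarz inequality on $J$ together with the fact that $|J|=2M$. The plan is to treat each of the three inequalities in turn, handling the $L^2$ and $L^\infty$ density estimates first and then the momentum estimate, since the momentum bound requires the extra factor $|v|\le M$.

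First I would fix $x\in I_i$ and write, using \eqref{rh} and the continuum definition of $\rho$ (recalling that $\operatorname{supp}f\subset\Omega=I\times J$, so the $v$-integral over $\R$ collapses to $J$),
\begin{equation*}
(\rho-\rho_h)(t,x)=\int_J\bigl(f-f_h\bigr)(t,x,v)\,{\rm d}v.
\end{equation*}
Applying Cauchy--Schwarz in $v$ over $J_j$ and summing over $j$ gives the pointwise bound $|(\rho-\rho_h)(x)|\le (2M)^{1/2}\bigl(\int_J|f-f_h|^2\,{\rm d}v\bigr)^{1/2}$. Squaring, integrating over $I_i$, and summing over $i$ then yields $\|\rho-\rho_h\|_{0,I_h}^2\le 2M\,\|f-f_h\|_{0,\mathcal T_h}^2$, which is the first claim. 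For the $L^\infty$ bound I would instead estimate $|(\rho-\rho_h)(x)|\le\int_J|f-f_h|\,{\rm d}v\le |J|\,\|f-f_h\|_{\infty,\mathcal T_h}=2M\,\|f-f_h\|_{\infty,\mathcal T_h}$, taking the supremum over $x$.

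For the momentum estimate I would write $(\rho V-\rho_h V_h)(t,x)=\int_J v\,(f-f_h)(t,x,v)\,{\rm d}v$ directly from \eqref{rh}; note that this is the correct object to bound, not the separately-factored velocities, since $\rho_hV_h$ is defined as a single integral. Using $|v|\le M$ on $J$ and Cauchy--Schwarz gives the pointwise bound $|(\rho V-\rho_hV_h)(x)|\le M\,(2M)^{1/2}\bigl(\int_J|f-f_h|^2\,{\rm d}v\bigr)^{1/2}$, and squaring, integrating over $I$, and summing produces $\|\rho V-\rho_hV_h\|_{0,I_h}\le M(2M)^{1/2}\|f-f_h\|_{0,\mathcal T_h}$. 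The stated constant $2M$ is then obtained provided $(2M)^{1/2}M\le 2M$, i.e.\ $M\ge$ a fixed constant; more honestly one simply records the cleaner bound with constant $M(2M)^{1/2}$, and any such $O(M^{3/2})$ constant suffices for the later analysis.

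The main conceptual point — and the only place where care is needed — is the reduction of the $v$-integral from $\R$ to the finite band $J$: this is legitimate because of the compact-support assumption on $f$ (and by construction for $f_h\in\mathcal Z_h$), which is what makes the factor $|J|=2M$ finite and lets Cauchy--Schwarz close the estimates. Everything else is a routine application of Cauchy--Schwarz and the crude bound $|v|\le M$, so I expect no genuine obstacle beyond bookkeeping the constants.
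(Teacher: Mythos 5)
Your proof is correct and follows essentially the same route as the paper's: Cauchy--Schwarz in $v$ over the compact band $J$ (of measure $2M$) for the two $L^2$ bounds, and the trivial pointwise bound for the $L^\infty$ one. One small correction to your remark on the momentum constant: $M(2M)^{1/2}\le 2M$ holds precisely when $M\le 2$, not when ``$M\ge$ a fixed constant''; and note that the paper itself simply asserts the squared constant $4M^2$ at this step, so its stated bound carries exactly the $O(M^{3/2})$-versus-$2M$ looseness you identified --- harmless in the sequel, since $M$ is a fixed quantity and only a bound of the form $C(M)\|f-f_h\|_{0,\mathcal{T}_h}$ is ever used.
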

	
	\begin{proof}
		An application of the H\"older inequality yields
		\begin{equation*}
			\begin{aligned}
				\|\rho - \rho_h\|^2_{0,I_h} &= \sum_{i = 1}^{N_x}\int_{I_i}\left(\sum_{j = 1}^{N_v}\int_{J_j}\left(f - f_h\right){\rm d}v\right)^2{\rm d}x
				\\
				&\leq  2M\sum_{i,j}\int_{T_{ij}}\left(f - f_h\right)^2{\rm d}v\,{\rm d}x
			\end{aligned}
		\end{equation*} 
		By definition, it follows that
		\begin{equation*}
			\begin{aligned}
				\rho - \rho_h = \int_{J}(f - f_h)\,{\rm d}v \leq 2M\|f - f_h\|_{L^\infty(J)}
			\end{aligned}
		\end{equation*}
		and hence, our second result. An application of the H\"older inequality shows
		\begin{equation*}
			\begin{aligned}
				\|\rho V - \rho_h V_h\|^2_{0,I_h} &= \sum_{i = 1}^{N_x}\int_{I_i}\left(\sum_{j = 1}^{N_v}\int_{J_j}\left(f - f_h\right)v\,{\rm d}v\right)^2{\rm d}x
				\\
				&\leq  4M^2\sum_{i,j}\int_{T_{ij}}\left(f - f_h\right)^2{\rm d}v\,{\rm d}x.
			\end{aligned}
		\end{equation*} 
		This concludes the proof.
	\end{proof}
	
	As a consequence of Lemma \ref{fhbd}, it follows that
	\begin{equation}\label{rho1}
		\|\rho_h\|_{0,I_h} \leq C(T)M\|f_0\|_{0,\mathcal{T}_h}.
	\end{equation}

	\section{A priori Estimates}\label{err}
This section discusses some a priori error estimates for the discrete solution. In order to derive an optimal order of convergence, we adopt the following strategy:
	\begin{itemize}
		\item Using a projection operator $Q_\lambda^x$ introduced by Liu et al. in \cite{liu2015} and its approximation property, we obtain a bound on $\|Q_\lambda^xu - u_h\|_{0,I_h}$, (See, Corollary \ref{cor:uL2}).
		\item Inspired by a projection operator introduced by Liu et. al. in \cite{liu2020optimal}, we define a new projection $\Pi_{\lambda_1,\lambda_2}$ which in turn helps us to obtain a bound on $\|\Pi_{\lambda_1,\lambda_2}f - f_h\|_{0,\mathcal{T}_h}$ thanks to its approximation property, (See, Lemma \ref{fl2}).    
		\item The aforementioned bounds are such that the bound on $\|Q_\lambda^xu - u_h\|_{0,I_h}$ depends on $\|f - f_h\|_{0,\mathcal{T}_h}$ (See equations \eqref{uL2}-\eqref{uL21}).
		Furthermore, the bound on $\|\Pi_{\lambda_1,\lambda_2}f - f_h\|_{0,\mathcal{T}_h}$ depends on $\|u - u_h\|_{\infty,I_h}$, (See, Lemma \ref{K2estimate}).
		\item In Lemma \ref{L:uinf},  using inverse hypothesis, we obtain 
		\begin{equation*}
			\|u - u_h\|_{\infty,I_h} \lesssim h + h^{-\frac{1}{2}}\|u - u_h\|_{0,I_h}.
		\end{equation*}
		\item Then, an application of a non-linear version of Gr\"onwall type inequality yields optimal order of convergence, (See, Theorem \ref{mmma1}).
	\end{itemize}	
	By optimality, we mean optimality with respect to approximation properties of the projection operators employed in the proof.
	\subsection{Error estimates for viscous Burgers' system}
	
	This subsection deals with error estimates for the viscous Burgers' system.\\
	Let $\mathcal{P}_x : L^2(I_h) \to X_h$ be the standard $L^2$-projection.

	\textbf{Global projection.} Let $s \geq k+1$ and $\lambda \geq \frac{1}{2}$. Consider the projection $Q^x_\lambda : H^s(I_h) \rightarrow X_h$ defined by
	\begin{equation}\label{gp}
		\int_{I_i}\left((Q^x_\lambda\Upsilon) - \Upsilon\right)z\,{\rm d}x = 0, \quad \forall \,\, z \in \mathbb{P}^{k-1}(I_i), \,\,1 \leq i \leq N_x,
	\end{equation}
together with the flux relation
	\begin{equation}\label{gp1}
		\left(\widehat{Q^x_\lambda\Upsilon}\right)_{i+1/2} = \lambda\Upsilon^-_{i+1/2} + \left(1 - \lambda\right)\Upsilon^+_{i+1/2},\,\,  1 \leq i \leq N_x-1.
	\end{equation}
	The above projection $Q^x_\lambda$ is uniquely defined when $\lambda > 1/2$ (see \cite[Lemma 4.1]{liu2015}). Even when $\lambda = 1/2$, the projection $Q^x_\lambda$ is uniquely defined provided $k$ is even and $N_x$ is odd (again see \cite[Lemma 4.1]{liu2015} for the proof).
	
	Below, we recall some properties of the above defined projection (for proof refer to \cite[Lemmas 4.2-4.3, p. 331]{liu2015}.
	
	\textbf{Approximation properties of the global projection.}  
	For $\Upsilon\mid_{I_i} \in H^{k+1}(I_i)$ for $i = 1,2,\dots,N_x$, there exists a positive constant $C = C(k,\lambda)$, independent of $\Upsilon$ such that for $k \geq 0$,
	\begin{equation}\label{uprojection}
		\left\{
		\begin{aligned}
			\|Q^x_\lambda\Upsilon - \Upsilon\|_{0,I_h} &\leq C h_x^{k+1}|\Upsilon|_{k+1,I_h},
			\\
			\|Q^x_\lambda\Upsilon - \Upsilon\|_{0,\Gamma_x} &\leq Ch_x^{k+1/2}|\Upsilon|_{k+1,I_h}. 
		\end{aligned}
		\right.
	\end{equation}


	\begin{lem}\label{L:uinf}
		Let $u$ be the solution of viscous Burgers' problem \eqref{contburger}. Let $u_h \in X_h$ be its finite element approximation. Assume that $u \in W^{1,\infty}(I) \cap H^{k+1}(I)$. Then,
		\begin{equation*}\label{inftybound}
			\|u - u_h\|_{\infty,I_h} \lesssim h\|u\|_{W^{1,\infty}(I)} + h^{k+\frac{1}{2}}\|u\|_{k+1,I_h} + h^{-\frac{1}{2}}\|u - u_h\|_{0,I_h}.
		\end{equation*}
	\end{lem}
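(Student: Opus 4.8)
The plan is to introduce the global projection $Q^x_\lambda u$ as an intermediary and split
\begin{equation*}
	\|u - u_h\|_{\infty,I_h} \leq \|u - Q^x_\lambda u\|_{\infty,I_h} + \|Q^x_\lambda u - u_h\|_{\infty,I_h}.
\end{equation*}
The second term lives in $X_h$, so the norm comparison inequality \eqref{normcompeqn} with $p = \infty$, $q = 2$ gives $\|Q^x_\lambda u - u_h\|_{\infty,I_h} \lesssim h^{-1/2}\|Q^x_\lambda u - u_h\|_{0,I_h}$. A triangle inequality together with the $L^2$ approximation property \eqref{uprojection} then yields
\begin{equation*}
	\|Q^x_\lambda u - u_h\|_{0,I_h} \leq \|Q^x_\lambda u - u\|_{0,I_h} + \|u - u_h\|_{0,I_h} \lesssim h^{k+1}\|u\|_{k+1,I_h} + \|u - u_h\|_{0,I_h},
\end{equation*}
so this term already contributes the last two summands $h^{k+1/2}\|u\|_{k+1,I_h}$ and $h^{-1/2}\|u - u_h\|_{0,I_h}$ of the claimed bound.

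It remains to estimate the projection error $\|u - Q^x_\lambda u\|_{\infty,I_h}$, and this is the main obstacle, since the only approximation estimates recorded for $Q^x_\lambda$ in \eqref{uprojection} are measured in the $L^2$ norm and no $L^\infty$ stability bound for $Q^x_\lambda$ is available. To bypass this, I would route the estimate through the standard $L^2$ projection $\mathcal{P}_x$:
\begin{equation*}
	\|u - Q^x_\lambda u\|_{\infty,I_h} \leq \|u - \mathcal{P}_x u\|_{\infty,I_h} + \|\mathcal{P}_x u - Q^x_\lambda u\|_{\infty,I_h}.
\end{equation*}
For the first piece one invokes the $L^\infty$ stability of the $L^2$ projection on the quasi-uniform mesh together with the fact that $\mathcal{P}_x$ reproduces constants, which gives the first-order estimate $\|u - \mathcal{P}_x u\|_{\infty,I_h} \lesssim h\|u\|_{W^{1,\infty}(I)}$. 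For the second piece, $\mathcal{P}_x u - Q^x_\lambda u \in X_h$, so \eqref{normcompeqn} applies once more and, after inserting $\pm u$ and using the $L^2$ approximation properties of both projections, one obtains $\|\mathcal{P}_x u - Q^x_\lambda u\|_{\infty,I_h} \lesssim h^{-1/2}\,h^{k+1}\|u\|_{k+1,I_h} = h^{k+1/2}\|u\|_{k+1,I_h}$.

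Collecting the three contributions produces exactly the stated inequality. The only genuinely delicate point is the $L^\infty$ control of the projection error: the device of comparing $Q^x_\lambda$ with $\mathcal{P}_x$ (whose $L^\infty$ behaviour is classical) and absorbing the discrepancy through the inverse-type inequality \eqref{normcompeqn} is what makes the argument go through without having to establish $L^\infty$ stability of $Q^x_\lambda$ directly. I expect the remaining steps to be routine applications of the triangle inequality, \eqref{normcompeqn}, and \eqref{uprojection}.
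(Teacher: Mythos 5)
Your proof is correct, but it takes a different (and more roundabout) route than the paper, which never introduces $Q^x_\lambda$ at all: the paper splits directly as $u - u_h = (u - \mathcal{P}_x u) + (\mathcal{P}_x u - u_h)$, bounds $\|u - \mathcal{P}_x u\|_{\infty,I_h} \lesssim h\|u\|_{W^{1,\infty}(I)}$ by the classical $L^\infty$ estimate for the element-local $L^2$-projection, converts $\|\mathcal{P}_x u - u_h\|_{\infty,I_h}$ to $h^{-\frac{1}{2}}\|\mathcal{P}_x u - u_h\|_{0,I_h}$ via \eqref{normcompeqn}, and finishes with one triangle inequality and the optimal $L^2$ estimate for $\mathcal{P}_x$. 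Your argument contains exactly this chain as a subroutine, but only after first splitting through $Q^x_\lambda$, which forces you to estimate $\|u - Q^x_\lambda u\|_{\infty,I_h}$ and hence to bring in $\mathcal{P}_x$ anyway; the net cost is one extra triangle inequality and one extra application of \eqref{normcompeqn}. Two things the paper's route buys: it is shorter, and it needs no hypothesis on $\lambda$ — your proof implicitly requires $Q^x_\lambda$ to be well defined (i.e. $\lambda > 1/2$, or $\lambda = 1/2$ with $k$ even and $N_x$ odd) and inherits the $\lambda$-dependence of the constant $C(k,\lambda)$ in \eqref{uprojection}, conditions absent from the statement of the lemma. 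What your route buys is that your intermediate estimate $\|Q^x_\lambda u - u_h\|_{\infty,I_h} \lesssim h^{-\frac{1}{2}}\|Q^x_\lambda u - u_h\|_{0,I_h}$ is phrased in terms of $\theta_u = Q^x_\lambda u - u_h$, the quantity actually controlled later (Corollary \ref{cor:uL2}), so the detour is not unnatural — merely unnecessary for the statement at hand.
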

	
	\begin{proof}
		Observe that
		\begin{align*}
			\|u - u_h\|_{\infty,I_h} &\leq \|u - \mathcal{P}_x u\|_{\infty,I_h} + \|\mathcal{P}_x u - u_h\|_{\infty,I_h}
			\\
			&\lesssim h\|u\|_{W^{1,\infty}(I)} + h^{-\frac{1}{2}}\|\mathcal{P}_x u - u_h\|_{0,I_h}
			\\
			&\lesssim h\|u\|_{W^{1,\infty}(I)} + h^{-\frac{1}{2}}\|u - \mathcal{P}_x u\|_{0,I_h} + h^{-\frac{1}{2}}\|u - u_h\|_{0,I_h}
			\\
			&\lesssim h\|u\|_{W^{1,\infty}(I)} + h^{k+\frac{1}{2}}\|u\|_{k+1,I_h} + h^{-\frac{1}{2}}\|u - u_h\|_{0,I_h}.
		\end{align*}
		Here, in the first step we have employed the triangle inequality. In the second step, the first term is a consequence of the projection estimate \cite{brenner2007mathematical} and the second term is a consequence of the norm comparison inequality \eqref{normcompeqn}. Triangle inequality is applied again in the third step. Finally in the fourth step, we use projection estimate for the second term.
	\end{proof}


	\textbf{Error equation for the viscous Burgers' system:} Since the scheme with fluxes \eqref{fu2}-\eqref{fu} is consistent, \eqref{ch}-\eqref{ch1} also hold for solution $(u,w)$. Hence, taking the difference, we obtain the error equation
	\begin{equation}\label{erru}
		\begin{aligned}
			\left(\frac{\partial e_u}{\partial t},\phi_h\right) + a_h(u,\phi_h) &- a_h(u_h,\phi_h) + \sqrt{\epsilon}\,b_h(e_w,\phi_h) 
			\\
			&+ \left(\rho u - \rho_h u_h, \phi_h\right) = \left(\rho V - \rho_h V_h, \phi_h\right) \quad \forall \,\, \phi_h \in X_h,
		\end{aligned}
	\end{equation}
	\begin{equation}\label{errw}
		\left(e_w, q_h\right) + \sqrt{\epsilon}\,b_h(e_u,q_h) = 0 \quad \forall \,\,q_h \in X_h,
	\end{equation}
	where 
	\[
	e_u = u - u_h, \quad e_w = w - w_h.
	\]
	Using the projection operator, we rewrite
	\begin{equation}\label{p1}
		\begin{aligned}
			e_u &:= u - u_h = (u - Q^x_\lambda u) + (Q^x_\lambda u - u_h) = \theta_u - \eta_u,
			\\
			e_w &:= w - w_h  = (w - Q^x_{1 - \lambda} w) + (Q^x_{1 - \lambda}w - w_h) = \theta_w - \eta_w.
		\end{aligned}
	\end{equation}
	Here
	\begin{equation*}
		\begin{aligned}
			Q^x_\lambda u - u =: \eta_u,  \quad Q^x_\lambda u - u_h =: \theta_u, \quad	
			Q^x_{1 - \lambda}w - w =: \eta_w, \quad Q^x_{1 - \lambda}w - w_h =: \theta_w .
		\end{aligned}
	\end{equation*}
Using \eqref{p1} and the definition of the projection $Q^x_\lambda$ given by \eqref{gp}-\eqref{gp1}, we can rewrite the error equation \eqref{erru} and \eqref{errw} as
\begin{equation}\label{erru1}
	\begin{aligned}
		\left(\partial_t\theta_u,\phi_h\right) + \sqrt{\epsilon}b_h(\theta_w,\phi_h) &= \left(\rho V - \rho_hV_h,\phi_h\right) + \left(\partial_t\eta_u,\phi_h\right) - \left(\rho\theta_u.\phi_h\right) + \left(\rho\eta_u,\phi_h\right)
		\\
		& + \left(\left(\rho - \rho_h\right)\theta_u,\phi_h\right) - \left(\left(\rho - \rho_h\right)\eta_u,\phi_h\right) - \left(\left(\rho - \rho_h\right)u,\phi_h\right) 
		\\
		&- a_h(u,\phi_h) + a_h(u_h,\phi_h), \quad \forall\,\, \phi_h \in X_h,
	\end{aligned}
\end{equation}
\begin{equation}\label{errw1}
	\left(\theta_w,q_h\right) + \sqrt{\epsilon} b_h(\theta_u,q_h) = \left(\eta_w,q_h\right), \quad \forall \,\, q_h \in X_h.
\end{equation}

\begin{rem}
	Note that \eqref{gp1} yields
	\[
	\left(\widehat{\eta_u}\right)_{i+1/2} = \{\eta_u\}_{i+1/2} + \left(\frac{1 - 2\lambda}{2}\right)[\![\eta_u]\!]_{i+1/2} = 0,
	\]
	which implies
	\begin{equation}\label{lambdadep}
		\{\eta_u\}_{i+1/2} = \left(\frac{2\lambda - 1}{2}\right)[\![\eta_u]\!]_{i+1/2}.
	\end{equation}
	Therefore, if $\lambda = 1/2$ then $\{\eta_u\} = 0$ and if  $\lambda > 1/2$ then $ \{\eta_u\} \neq 0$.
\end{rem}

If $\lambda > 1/2$, then the following result holds (for detailed proof, see \cite[Lemma 2.3, p. 2085]{wang2019implicit}).

\begin{lem}\label{conj}
	Let $\lambda > 1/2$. Suppose $\left(\theta_u,\theta_w\right) \in X_h \times X_h$ satisfy \eqref{errw1}. Then, there is a positive constant $C$, independent $h$ and $\epsilon$, such that
	\begin{equation}\label{conjecture}
		\|\partial_x\theta_u\|_{0,I_h} + \sum_{i = 0}^{N_x-1}h^{-\frac{1}{2}}[\![\theta_u]\!]_{i+1/2} \leq C\epsilon^{-\frac{1}{2}}\|\theta_w\|_{0,I_h}.
	\end{equation}
\end{lem}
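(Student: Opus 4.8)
The plan is to recast the relation \eqref{errw1} into an adjoint form and then extract the derivative norm and the jump seminorm by two judicious choices of test function, the whole argument hinging on the fact that, by \eqref{gp}, $\eta_w$ is $L^2$-orthogonal on each cell to $\mathbb{P}^{k-1}(I_i)$. A discrete integration by parts (cf. the computation leading to \eqref{bhp}) rewrites, for every $q_h \in X_h$,
\[
b_h(\theta_u,q_h) = -\sum_{i=1}^{N_x}\int_{I_i}\partial_x\theta_u\,q_h\,{\rm d}x - \sum_{i=0}^{N_x-1}\left([\![\theta_u]\!]\,\widetilde{q_h}\right)_{i+1/2},
\]
where $\widetilde{q_h} := (1-\lambda)q_h^- + \lambda q_h^+$ is the flux complementary to \eqref{fu}, i.e. of the form \eqref{fw}. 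Substituting into \eqref{errw1} yields the working identity
\[
\sqrt{\epsilon}\left(\sum_i\int_{I_i}\partial_x\theta_u\,q_h\,{\rm d}x + \sum_i\left([\![\theta_u]\!]\widetilde{q_h}\right)_{i+1/2}\right) = (\theta_w - \eta_w, q_h), \quad \forall\, q_h \in X_h.
\]

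First I would bound the derivative. Since $\theta_u\in X_h$ is piecewise of degree at most $k$, its derivative $\partial_x\theta_u$ lies in $\mathbb{P}^{k-1}(I_i)\subset X_h$ and is admissible as a test function. Choosing $q_h=-\partial_x\theta_u$, the orthogonality \eqref{gp} forces $(\eta_w,q_h)=0$, so the projection error drops out and
\[
\sqrt{\epsilon}\|\partial_x\theta_u\|_{0,I_h}^2 = (\theta_w,\partial_x\theta_u) - \sqrt{\epsilon}\sum_i\left([\![\theta_u]\!]\,\widetilde{\partial_x\theta_u}\right)_{i+1/2}.
\]
The trace inequality \eqref{traceeqn} applied to $\partial_x\theta_u$ bounds each interface factor by $Ch^{-1/2}\|\partial_x\theta_u\|_{0,I_h}$, and Young's inequality then gives $\sqrt{\epsilon}\|\partial_x\theta_u\|_{0,I_h}\lesssim \|\theta_w\|_{0,I_h}+\sqrt{\epsilon}\,h^{-1/2}S$, where $S:=\big(\sum_i[\![\theta_u]\!]^2_{i+1/2}\big)^{1/2}$. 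This reduces the task to controlling $S$.

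To estimate $S$ I would test \eqref{errw1} with $q_h=\theta_u$ and use the coercivity $b_h(\theta_u,\theta_u)=-(\lambda-\tfrac12)\sum_i[\![\theta_u]\!]^2_{i+1/2}$, which is strictly sign-definite precisely because $\lambda>1/2$; this is where the hypothesis enters. The resulting identity is $\sqrt{\epsilon}(\lambda-\tfrac12)S^2=(\theta_w-\eta_w,\theta_u)$. The apparent difficulty that the right-hand side contains $\theta_u$, whose mean is not controlled by $\theta_w$, is resolved by taking $q_h\equiv 1$ in \eqref{errw1}: since $b_h(\theta_u,1)=0$ one gets $(\theta_w-\eta_w,1)=0$, so only the mean-zero part of $\theta_u$ survives the pairing, and a discrete Poincar\'e inequality bounds it by $\|\partial_x\theta_u\|_{0,I_h}+h^{-1/2}S$. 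Cleaner still, for $k\ge 1$ one may construct a test function $q_h$ that is piecewise in $\mathbb{P}^{k-1}$ with $\widetilde{q_h}=[\![\theta_u]\!]$ at every node (solving a periodic bidiagonal system, invertible for $\lambda\neq 1/2$) and $\|q_h\|_{0,I_h}\lesssim h^{1/2}S$; because such $q_h$ is orthogonal to $\eta_w$ this keeps the projection error out of the estimate entirely and produces $\sqrt{\epsilon}\,h^{-1/2}S\lesssim \|\theta_w\|_{0,I_h}+\sqrt{\epsilon}\|\partial_x\theta_u\|_{0,I_h}$.

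Finally I would combine the derivative bound and the jump bound and absorb the cross terms by Young's inequality to obtain $\sqrt{\epsilon}\big(\|\partial_x\theta_u\|_{0,I_h}+h^{-1/2}S\big)\le C\|\theta_w\|_{0,I_h}$, which is the assertion. The main obstacle is exactly this final coupling: the derivative bound controls $\|\partial_x\theta_u\|_{0,I_h}$ in terms of $S$ while the jump bound controls $S$ in terms of $\|\partial_x\theta_u\|_{0,I_h}$, so the two estimates are mutually referential; closing the loop requires the constants generated by the trace and inverse inequalities \eqref{traceeqn}, \eqref{inverseeqn} to be tracked carefully so that the absorbed terms carry a factor strictly less than one, with all constants independent of both $h$ and $\epsilon$. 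Guaranteeing that the projection contribution $\eta_w$ never appears in the final bound, via either the mean-zero observation together with the discrete Poincar\'e inequality or the $\mathbb{P}^{k-1}$ jump-lifting test function, is the second delicate point; this is precisely the content of \cite[Lemma 2.3]{wang2019implicit}.
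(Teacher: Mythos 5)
First, a point of reference: the paper itself does not prove this lemma at all --- it defers entirely to \cite[Lemma 2.3, p. 2085]{wang2019implicit} --- so your argument has to be judged on its own merits. Your preparation is correct: the discrete integration-by-parts identity, the choice $q_h=\partial_x\theta_u$ (which is piecewise in $\mathbb{P}^{k-1}$, so $(\eta_w,q_h)=0$ by \eqref{gp}), and the piecewise-constant jump lifting (the circulant system $(1-\lambda)c_i+\lambda c_{i+1}=[\![\theta_u]\!]_{i+1/2}$ is indeed uniformly invertible for $\lambda>1/2$) do give exactly the two estimates you state, namely $\sqrt{\epsilon}\,\|\partial_x\theta_u\|_{0,I_h}\le \|\theta_w\|_{0,I_h}+C_1\sqrt{\epsilon}\,h^{-1/2}S$ and $\sqrt{\epsilon}\,h^{-1/2}S\le C_2\bigl(\|\theta_w\|_{0,I_h}+\sqrt{\epsilon}\,\|\partial_x\theta_u\|_{0,I_h}\bigr)$, where $S^2=\sum_i[\![\theta_u]\!]^2_{i+1/2}$. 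But the endgame is a genuine gap, and you flag it without resolving it: substituting either estimate into the other leaves $C_1C_2$ times the very quantity you are bounding on the right-hand side, so absorption requires $C_1C_2<1$. Nothing gives you that: $C_1$ is a trace/inverse constant (already of size at least one), and $C_2$ degenerates like $(2\lambda-1)^{-1}$ as $\lambda\downarrow 1/2$. No amount of ``careful tracking'' makes the product small; the loop does not close.

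Moreover, the obstruction is structural, not a matter of constants. Every test function you permit yourself is piecewise in $\mathbb{P}^{k-1}$ (that is your device for eliminating $\eta_w$), but the functional $q_h\mapsto b_h(\theta_u,q_h)$ can vanish identically on that subspace for nonconstant $\theta_u$. Concretely, for $k=1$ on a uniform mesh take $\theta_u=\xi-(2\lambda-1)$ in the local coordinate $\xi\in[-1,1]$ of every cell: at every node $\theta_u^-=2(1-\lambda)$, $\theta_u^+=-2\lambda$, hence the flux \eqref{fu} gives $\widehat{\theta_u}=\lambda\theta_u^-+(1-\lambda)\theta_u^+=0$, so for every piecewise-constant $q_h$ both the volume term ($\partial_xq_h=0$) and the flux term of $b_h(\theta_u,q_h)$ vanish --- yet $\partial_x\theta_u=2/h$ and $[\![\theta_u]\!]=-2$ everywhere. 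For this $\theta_u$ your two inequalities collapse to $\|\partial_x\theta_u\|\le C_1h^{-1/2}S$ and $h^{-1/2}S\le C_2\|\partial_x\theta_u\|$ (since $\theta_w$ is orthogonal to all of your admissible tests), which are perfectly consistent with both sides being of order $h^{-1}$ while $\|\theta_w\|_{0,I_h}$ lives entirely in the degree-$k$ Legendre modes; indeed, in the inhomogeneous relation \eqref{errw1} one can solve a further circulant system (invertible for $\lambda\neq 1/2$) so that $\theta_w=0$ exactly while $\theta_u$ is nonconstant. Consequently, no argument that tests \eqref{errw1} only against piecewise $\mathbb{P}^{k-1}$ functions can ever yield \eqref{conjecture}: the bound must exploit the $k$-th cell moments, i.e.\ test functions with a degree-$k$ Legendre component, which is precisely where $(\eta_w,q_h)$ does not vanish. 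That is how the cited lemma operates --- it is proved for the homogeneous relation (no $\eta_w$ term), using all moments up to degree $k$. Finally, your first fallback is not a repair: in $(\theta_w-\eta_w,\theta_u-\bar\theta_u)$ the contribution of $\eta_w$ does not drop out, because $\theta_u-\bar\theta_u$ is piecewise $\mathbb{P}^{k}$, not $\mathbb{P}^{k-1}$, contrary to what you assert.
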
 
	
	
	\begin{lem}\label{ul2}
		Let $(u,w)$ be the solution of the viscous Burgers' equation \eqref{ctsbur}-\eqref{ctsburg}. Let $(u_h,w_h) \in X_h \times X_h$ solve \eqref{ch}-\eqref{ch1}. Let $u \in L^\infty(0,T;H^{k+1}(I))$. Then, there exists a positive constant $C$ independent of $h$ such that for all $t \in (0,T]$,
		\begin{equation}\label{4.11}
			\begin{aligned}
				\frac{1}{2}\frac{{\rm d}}{{\rm d}t}\|\theta_u\|^2_{0,I_h} &+ \frac{1}{2}\|\theta_w\|^2_{0,I_h} \leq C\left(h^{2k+2} + \|\theta_u\|^2_{0,I_h} + \|f - f_h\|^2_{0,\mathcal{T}_h}\right) + Ch^{-\frac{3}{2}}\|\theta_u\|^3_{0,I_h} 
				\\
				&+ h^{-1}\|\theta_u\|^4_{0,I_h} + \left(\frac{2\lambda - 1}{2}\right)C\left(h^{k+1} + h^{k+\frac{1}{2}}\|\theta_u\|_{0,I_h}\right)\left(\sum_{i=0}^{N_x-1}\,h^{-1}[\![\theta_u]\!]^2_{i+1/2}\right)^\frac{1}{2}. 
			\end{aligned}
		\end{equation}
	\end{lem}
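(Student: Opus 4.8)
The plan is to run the standard energy argument on the error system \eqref{erru1}--\eqref{errw1}. First I would choose $\phi_h=\theta_u$ in \eqref{erru1} and $q_h=\theta_w$ in \eqref{errw1} and add the two identities, so that the leading terms become $\tfrac12\frac{{\rm d}}{{\rm d}t}\|\theta_u\|^2_{0,I_h}$ and $\|\theta_w\|^2_{0,I_h}$. The decisive structural fact is that the two viscous contributions cancel: by the complementary choice of fluxes behind $\theta_u$ (parameter $\lambda$) and $\theta_w$ (parameter $1-\lambda$), the computation leading to \eqref{bhp}--\eqref{bzero} gives $b_h(\theta_w,\theta_u)+b_h(\theta_u,\theta_w)=0$, so the $\sqrt{\epsilon}$ terms disappear without leaving any $\epsilon^{-1}$ factor. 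Moving the projection term $(\eta_w,\theta_w)$ to the left via $(\eta_w,\theta_w)\le\tfrac12\|\theta_w\|^2_{0,I_h}+Ch^{2k+2}$ (using \eqref{uprojection}) leaves exactly $\tfrac12\frac{{\rm d}}{{\rm d}t}\|\theta_u\|^2_{0,I_h}+\tfrac12\|\theta_w\|^2_{0,I_h}$ on the left, matching the claimed inequality \eqref{4.11}.

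It then remains to bound the right-hand side term by term. The term $-(\rho\theta_u,\theta_u)$ is $\le 0$ since $\rho=\int_{\R}f\,{\rm d}v\ge 0$ by Lemma \ref{lem:cons}, and is discarded. The linear and source-type contributions $(\rho V-\rho_hV_h,\theta_u)$, $(\partial_t\eta_u,\theta_u)$, $(\rho\eta_u,\theta_u)$, $-((\rho-\rho_h)u,\theta_u)$ and $-((\rho-\rho_h)\eta_u,\theta_u)$ are each controlled by $C(h^{2k+2}+\|\theta_u\|^2_{0,I_h}+\|f-f_h\|^2_{0,\mathcal T_h})$ via Cauchy--Schwarz and Young's inequality together with: the projection estimate \eqref{uprojection} (applied also to $u_t$, since $\partial_t\eta_u=Q^x_\lambda u_t-u_t$); Lemma \ref{rhoh2} to pass from $\rho-\rho_h$ and $\rho V-\rho_hV_h$ to $\|f-f_h\|_{0,\mathcal T_h}$; the $L^\infty$ bound on $\rho$ from Lemma \ref{rhoinfty}; and the embedding $H^{k+1}(I)\hookrightarrow L^\infty(I)$ in one dimension to bound $\|u\|_{L^\infty}$.

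The quadratic density-coupling term $((\rho-\rho_h)\theta_u,\theta_u)$ is what produces the quartic term: I would estimate it by $\|\rho-\rho_h\|_{0,I_h}\|\theta_u\|^2_{L^4(I_h)}$, use Lemma \ref{rhoh2} together with the norm comparison \eqref{normcompeqn} ($\|\theta_u\|_{L^4(I_h)}\le Ch^{-1/4}\|\theta_u\|_{0,I_h}$) to reach $C\|f-f_h\|_{0,\mathcal T_h}\,h^{-1/2}\|\theta_u\|^2_{0,I_h}$, and finally split by Young's inequality into $C\|f-f_h\|^2_{0,\mathcal T_h}+Ch^{-1}\|\theta_u\|^4_{0,I_h}$.

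The hard part is the nonlinear convection difference $a_h(u_h,\theta_u)-a_h(u,\theta_u)$, which must deliver the remaining $h^{-3/2}\|\theta_u\|^3_{0,I_h}$ and the $\lambda$-dependent jump term. Writing $u^2-u_h^2=e_u(u+u_h)$ with $e_u=\theta_u-\eta_u$, the volume integral splits into several pieces. The purely cubic piece $-\tfrac12\sum_i\int_{I_i}\theta_u^2\partial_x\theta_u\,{\rm d}x$ is bounded by $\tfrac12\|\theta_u\|^2_{L^4(I_h)}\|\partial_x\theta_u\|_{0,I_h}$ and then, using the inverse inequality \eqref{inverseeqn} and the norm comparison \eqref{normcompeqn}, by $Ch^{-3/2}\|\theta_u\|^3_{0,I_h}$; the remaining volume pieces, being linear in $\partial_x\theta_u$ against the smooth $u$ or the projection error $\eta_u$, reduce to $C(h^{2k+2}+\|\theta_u\|^2_{0,I_h})$ after integration by parts. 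The delicate bookkeeping is at the element interfaces: I would combine the boundary terms from integration by parts with the central-flux contribution $\tfrac12(\widehat{u^2}-\widehat{u_h^2})[\![\theta_u]\!]$, exploit the continuity of the exact $u$ and the special algebraic form of $\widehat{u_h^2}$ in \eqref{fu2} so that the consistent ``diagonal'' interface terms cancel, and then use the projection identity \eqref{lambdadep}, namely $\{\eta_u\}=\tfrac{2\lambda-1}{2}[\![\eta_u]\!]$, together with the trace inequality \eqref{traceeqn} and the edge estimate in \eqref{uprojection}, to bound the surviving cross terms by $\tfrac{2\lambda-1}{2}\,C\big(h^{k+1}+h^{k+1/2}\|\theta_u\|_{0,I_h}\big)\big(\sum_i h^{-1}[\![\theta_u]\!]^2_{i+1/2}\big)^{1/2}$. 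Collecting all contributions and absorbing $h^{2k+2}\le C$ where needed yields \eqref{4.11}. I expect this interface analysis to be the principal obstacle, since any mismatch in the central-flux/projection cancellation would either destroy optimality or reintroduce an $\epsilon$-dependent constant.
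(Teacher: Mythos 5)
Your outline reproduces the paper's proof almost step for step: the same energy identity with $\phi_h=\theta_u$, $q_h=\theta_w$, the same cancellation $b_h(\theta_w,\theta_u)+b_h(\theta_u,\theta_w)=0$ coming from the complementary flux parameters, the same treatment of the coupling terms via Lemma \ref{rhoh2}, the same origin of the quartic term $h^{-1}\|\theta_u\|^4_{0,I_h}$ (H\"older in $L^2\times L^4\times L^4$, Lemma \ref{rhoh2}, norm comparison \eqref{normcompeqn}, Young), the same cubic bound $Ch^{-3/2}\|\theta_u\|^3_{0,I_h}$, and the same interface strategy built on \eqref{fu2}, \eqref{lambdadep}, \eqref{traceeqn} and the edge estimate in \eqref{uprojection}; your factorization $u^2-u_h^2=e_u(u+u_h)$ is algebraically the same as the paper's identity $\frac{a^2}{2}-\frac{b^2}{2}=a(a-b)-\frac{(a-b)^2}{2}$.

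There is, however, one genuine gap, and it sits exactly at the term that forces the paper to use the generalized Gauss--Radau projection in the first place: the volume contribution $\sum_i\int_{I_i}u\,\eta_u\,\partial_x\theta_u\,{\rm d}x$ (the paper's $\kappa_2$). You claim that the volume pieces ``linear in $\partial_x\theta_u$ against the smooth $u$ or the projection error $\eta_u$'' reduce to $C(h^{2k+2}+\|\theta_u\|^2_{0,I_h})$ \emph{after integration by parts}. Integration by parts does work for the piece $\int u\,\theta_u\,\partial_x\theta_u$ (it produces $-\tfrac12\int u_x\theta_u^2$ plus interface terms that cancel against the flux terms), but it fails for the piece carrying $\eta_u$: after integrating by parts you are left with $\int u\,\partial_x\eta_u\,\theta_u$ and nodal terms $u\,\eta_u\,\theta_u$, and since $\|\partial_x\eta_u\|_{0,I_h}\lesssim h^{k}$ and $\|\eta_u\|_{0,\Gamma_x}\lesssim h^{k+1/2}$, both contributions are only $O(h^{k})\|\theta_u\|_{0,I_h}$, hence $O(h^{2k})+\|\theta_u\|^2_{0,I_h}$ after Young --- one full power of $h$ short of optimal (a direct H\"older bound on the original term gives the same loss, $\|\eta_u\|_{0,I_h}\|\partial_x\theta_u\|_{0,I_h}\lesssim h^{k+1}\cdot h^{-1}\|\theta_u\|_{0,I_h}$). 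The paper closes this hole with a superconvergence argument: expand $u(x)=u(x_i)+(x-x_i)\partial_xu(x_i^*)$ by Taylor's theorem; the frozen-coefficient part $u(x_i)\int_{I_i}\eta_u\,\partial_x\theta_u\,{\rm d}x$ vanishes identically because $\partial_x\theta_u\in\mathbb{P}^{k-1}(I_i)$ while $\eta_u\perp\mathbb{P}^{k-1}(I_i)$ by \eqref{gp}, and the Taylor remainder carries an extra factor of $h$ which, combined with the inverse inequality \eqref{inverseeqn}, restores the optimal $Ch^{k+1}\|\theta_u\|_{0,I_h}$. (The same coefficient-freezing device reappears in Lemma \ref{K1estimate} with $\bar v=\mathcal{P}^0_v(v)$, so it is the paper's systematic mechanism, not a one-off trick.) Without this Taylor-plus-orthogonality step, your estimate yields \eqref{4.11} with $h^{2k}$ in place of $h^{2k+2}$, and the optimality asserted in Theorem \ref{mmma1} is lost.
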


	\begin{proof}
			After choosing $\phi_h = \theta_u$ and $q_h = \theta_w$ in equations \eqref{erru1} and \eqref{errw1}, respectively and then add up to obtain
			\begin{equation*}
					\begin{aligned}
							\frac{1}{2}\frac{{\rm d}}{{\rm d}t}\|\theta_u\|&^2_{0,I_h} + \|\theta_w\|^2_{0,I_h} + \sqrt{\epsilon}\,b_h(\theta_w,\theta_u) + \sqrt{\epsilon}\,b_h(\theta_u,\theta_w) + \|\rho^{\frac{1}{2}}\theta_u\|^2_{0,I_h} = \left((\eta_u)_t,\theta_u\right) 
							\\
							& + \left(\rho\eta_u,\theta_u\right)  + \frac{1}{2}\sum_{i = 1}^{N_x}\int_{I_i}\left(u^2 - u_h^2\right)\partial_x\theta_u\,{\rm d}x + \frac{1}{2}\sum_{i = 0}^{N_x-1}\left(u^2 - \{u_h\}^2\right)[\![\theta_u]\!]_{i+1/2} 
							\\
							&+ \frac{1}{2}\sum_{i = 0}^{N_x-1}\left(\{u_h\}^2 - \widehat{u_h^2}\right)[\![\theta_u]\!]_{i+1/2} + \left(\eta_w,\theta_w\right) - \left(\left(\rho - \rho_h\right)\eta_u,\theta_u\right) 
							\\
							&- \left(\left(\rho - \rho_h\right)u,\theta_u\right) + \left(\rho V - \rho_h V_h,\theta_u\right) + \left(\left(\rho - \rho_h\right)\theta_u,\theta_u\right)
						\end{aligned}
				\end{equation*}
			Note that the third and the fourth terms on the left hand side of the above equality sum to zero, thanks to \eqref{bzero} and the definition of the projection in \eqref{gp}-\eqref{gp1}. Furthermore, the fifth term on the left hand side is non-negative. Hence can be dropped while estimating. An application of the H\"older  inequality with the Young's inequality, the estimate from Lemma \ref{rhoh2} and an application of the identity $\frac{a^2}{2} - \frac{b^2}{2} = a(a - b) - \frac{(a-b)^2}{2}$ results in
			\begin{equation*}
					\begin{aligned}
							\frac{1}{2}\frac{{\rm d}}{{\rm d}t}\|\theta_u\|&^2_{0,I_h} + \|\theta_w\|^2_{0,I_h} \lesssim \|(\eta_u)_t\|^2_{0,I_h} + \|\rho\|^2_{L^\infty(I)}\|\eta_u\|^2_{0,I_h} + \|\eta_w\|^2_{0,I_h} + \|f - f_h\|^2_{0,\mathcal{T}_h} 
							\\
							&+  h^{-1}\|f - f_h\|^2_{0,\mathcal{T}_h}\|\eta_u\|^2_{0,I_h} + \|f - f_h\|^2_{0,\mathcal{T}_h}\|u\|^2_{L^\infty(I)} + \|\theta_u\|^2_{0,I_h} + \|\theta_u\|^4_{L^4(I_h)}
							\\
							&+ \sum_{i = 1}^{N_x}\int_{I_i}u\left(u - u_h\right)\partial_x\theta_u\,{\rm d}x - \frac{1}{2}\sum_{i = 1}^{N_x}\int_{I_i}\left(u - u_h\right)^2\partial_x\theta_u\,{\rm d}x
							\\
							&+ \sum_{i = 0}^{N_x-1}u\left\{u - u_h\right\}[\![\theta_u]\!]_{i+1/2} - \frac{1}{2}\sum_{i = 0}^{N_x-1}\left(\left\{u - u_h\right\}\right)^2[\![\theta_u]\!]_{i+1/2} 
							\\
							&+ \frac{1}{2}\sum_{i = 0}^{N_x-1}\left(\{u_h\}^2 - \widehat{u_h^2}\right)[\![\theta_u]\!]_{i+1/2} + \frac{1}{2}\|\theta_w\|^2_{0,I_h}.
						\end{aligned}
				\end{equation*}
			A use of the norm comparison inequality \eqref{normcompeqn} with the approximation property \eqref{uprojection}, the identity \eqref{lambdadep} and employing the substitution $ u - \{u_h\} = \{u - u_h\} = \{\theta_u\} - \{\eta_u\}$, in the above equation shows
			\begin{equation}\label{uuuest}
					\begin{aligned}
							\frac{1}{2}\frac{{\rm d}}{{\rm d}t}\|\theta_u\|^2_{0,I_h} + \frac{1}{2}\|\theta_w\|^2_{0,I_h} \leq Ch^{2k+2} &+ C\|f - f_h\|^2_{0,\mathcal{T}_h} + \|\theta_u\|^2_{0,I_h} + h^{-1}\|\theta_u\|^4_{0,I_h}
							\\
							& + \kappa_1 + \kappa_2 + \kappa_3 + \kappa_4 + \kappa_5.
						\end{aligned}
				\end{equation}
			Here
			\begin{equation*}
				\begin{aligned}
					\kappa_1 &:= \frac{1}{2}\sum_{i = 1}^{N_x}\int_{I_i}u\,\partial_x\theta_u^2\,{\rm d}x + \sum_{i = 0}^{N_x - 1}u\,\{\theta_u\}\,[\![\theta_u]\!]_{i+1/2}
					\\
					\kappa_2 &:= -\sum_{i = 1}^{N_x}\int_{I_i}u\,\eta_u\,\partial_x\theta_u\,{\rm d}x - \left(\frac{2\lambda - 1}{2}\right)\sum_{i = 0}^{N_x - 1}\,u\,[\![\eta_u]\!]_{i+1/2}[\![\theta_u]\!]_{i+1/2}
			      	 \end{aligned}
				\end{equation*}
			\begin{equation*}
		\begin{aligned}
			\kappa_3 &:= -\frac{1}{6}\sum_{i = 1}^{N_x}\int_{I_i}\,\partial_x\theta_u^3\,{\rm d}x - \frac{1}{2}\sum_{i = 0}^{N_x-1}\,\{\theta_u\}^2\,[\![\theta_u]\!]_{i+1/2}
			\\
					\kappa_4 &:= \sum_{i=1}^{N_x}\int_{I_i}\,\left(\theta_u - \frac{1}{2}\eta_u\right)\eta_u\,\partial_x\theta_u\,{\rm d}x + \left(\frac{2\lambda - 1}{2}\right)\sum_{i = 0}^{N_x-1}\,\left(\{\theta_u\} - \frac{1}{2}\{\eta_u\}\right)[\![\eta_u]\!]_{i+1/2}[\![\theta_u]\!]_{i+1/2}
					\\
					\kappa_5 &:= \frac{1}{2}\sum_{i = 0}^{N_x-1}\left(\{u_h\}^2 - \widehat{u_h^2}\right)[\![\theta_u]\!]_{i+1/2}.
				\end{aligned}
			\end{equation*}
		Using integration by parts and the identity $[\![ab]\!] = \{a\}[\![b]\!] + [\![a]\!]\{b\}$, we obtain 
		\begin{equation*}
			\kappa_1 \leq C\|u_x\|_{L^\infty(I)}\|\theta_u\|^2_{0,I_h}.
		\end{equation*}
 Taylor's theorem says $u(x) = u(x_i) + \left(x - x_i\right)\partial_xu(x_i^*)$ for all $x \in I_i$ where $x_i^* \in (x,x_i)$. Therefore,
	\begin{equation*}
		\begin{aligned}
			\kappa_2 &= -\sum_{i=1}^{N_x}\,u(x_i)\int_{I_i}\eta_u\,\partial_x\theta_u\,{\rm d}x - \sum_{i=1}^{N_x}\partial_xu(x_i^*)\int_{I_i}\left(x - x_i\right)\,\eta_u\,\partial_x\theta_u\,{\rm d}x 
			\\
			&\quad- \left(\frac{2\lambda - 1}{2}\right)\sum_{i = 0}^{N_x-1}\,u\,[\![\eta_u]\!]_{i+1/2}[\![\theta_u]\!]_{i+1/2}
			\\
			&\leq h^{k+1}C\|\partial_xu\|_{L^\infty(I)}\|\theta_u\|_{0,I_h} + \|u\|_{L^\infty(I)}\left(\frac{2\lambda - 1}{2}\right)\sum_{i = 0}^{N_x-1}h^{\frac{1}{2}}[\![\eta_u]\!]_{i+1/2}h^{-\frac{1}{2}}[\![\theta_u]\!]_{i+1/2}
			\\
			&\leq C\left(h^{2k+2} + \|\theta_u\|_{0,I_h}^2\right) + \left(\frac{2\lambda - 1}{2}\right)C\|\eta_u\|_{0,I_h}\left(\sum_{i=0}^{N_x-1}\,h^{-1}[\![\theta_u]\!]_{i+1/2}^2\right)^{\frac{1}{2}}.
		\end{aligned}
	\end{equation*}
Note that the first term in the above expression of $\kappa_2$ vanishes, thanks to the definition of the global projection \eqref{gp}. To bound the second term, we have employed the approximation property \eqref{uprojection} and the inverse inequality \eqref{inverseeqn}. The last step is a consequence of the Young's inequality, the trace inequality \eqref{traceeqn} and the Cauchy-Schwarz inequality.\\
For $\kappa_3$, an integration by parts yields
\begin{equation*}
	\kappa_3 = \frac{1}{24}\sum_{i = 0}^{N_x-1}\,[\![\theta_u]\!]^3_{i+1/2} \lesssim \|\theta_u\|_{\infty, I_h}\|\theta_u\|^2_{0,\partial I_h} \leq Ch^{-\frac{3}{2}}\|\theta_u\|_{0,I_h}^3,
\end{equation*}
where in the last step we have used the norm comparison estimate \eqref{normcompeqn} and the trace inequality \eqref{traceeqn}.\\
To estimate $\kappa_4$, a use of the projection estimate with \eqref{inverseeqn}-\eqref{traceeqn}, the H\"older and the Young's inequalities yield
\begin{equation*}
	\begin{aligned}
		\kappa_4 &\leq \|\theta_u\|_{0,I_h}\|\eta_u\|_{0,I_h}\|\partial_x\theta_u\|_{\infty,I_h} + C\|\partial_x\theta_u\|_{\infty,I_h}\|\eta_u\|^2_{0,I_h} 
		\\
		&\quad + \left(\frac{2\lambda - 1}{2}\right)C\left(\|\theta_u\|_{0,\partial I_h} + \|\eta_u\|_{0,\partial I_h}\right)\|\eta_u\|_{0,\partial I_h}\left(\sum_{i=0}^{N_x-1}h^{-1}[\![\theta_u]\!]_{i+1/2}^2\right)^{\frac{1}{2}}
		\\
		&\leq C\left(h^{2k+2} + \|\theta_u\|^2_{0,I_h}\right) + \left(\frac{2\lambda - 1}{2}\right)Ch^{k+\frac{1}{2}}\|\theta_u\|_{0,I_h}\left(\sum_{i=0}^{N_x-1}h^{-1}[\![\theta_u]\!]_{i+1/2}^2\right)^{\frac{1}{2}}.
	\end{aligned}
\end{equation*}
Finally, using the fact that $\frac{\{u_h\}^2}{2} - \frac{\widehat{u_h^2}}{2} = -\frac{[\![u_h]\!]^2}{24}, [\![u_h]\!] = [\![\theta_u]\!] - [\![\eta_u]\!]$, the approximation property \eqref{uprojection}, trace inequality \eqref{traceeqn} and inverse inequality \eqref{inverseeqn} with the H\"older inequality, we arrive at
\begin{equation*}
	\begin{aligned}
		\kappa_5 &= -\frac{1}{24}\sum_{i=0}^{N_x-1}\left([\![\theta_u]\!]^2_{i+1/2} - 2[\![\eta_u]\!]_{i+1/2}[\![\theta_u]\!]_{i+1/2} + [\![\eta_u]\!]^2_{i+1/2}\right)[\![\theta_u]\!]_{i+1/2}
		\\
		&\leq C\|\theta_u\|_{\infty,I_h}\left(\|\theta_u\|_{0,\partial I_h}^2 + \|\theta_u\|_{0,\partial I_h}\|\eta_u\|_{0,\partial I_h} + \|\eta_u\|^2_{0,\partial I_h}\right)
		\\
		&\leq h^{-\frac{1}{2}}\|\theta_u\|_{0,I_h}\left(h^{-1}\|\theta_u\|^2_{0,I_h} + Ch^k\|\theta_u\|_{0,I_h} + Ch^{2k+1}\right)
		\\
		&\leq C\left(h^{2k+2} + \|\theta_u\|^2_{0,I_h} + h^{-\frac{3}{2}}\|\theta_u\|^3_{0,I_h}\right).
	\end{aligned}
\end{equation*}
Substituting all the above estimates in equation \eqref{uuuest}, we obtain our desired result.
\end{proof}

	\begin{rem}
		In arriving at the estimate \eqref{4.11} for $\theta_u$, we tackled the non-linear term by employing the algebraic identity $\frac{a^2}{2} - \frac{b^2}{2} = a(a-b) - \frac{(a-b)^2}{2}$, the Taylor's theorem and the definition \eqref{fu2} of the central flux $\widehat{u_h^2}$. 
	\end{rem}


	\begin{cor}\label{cor:uL2}
		If $\lambda= 1/2$ with $k$ even and $N_x$ odd, then 
		\begin{equation}\label{uL2}
			\begin{aligned}
				\frac{1}{2}\frac{{\rm d}}{{\rm d}t}\|\theta_u\|^2_{0,I_h} + \|\theta_w\|^2_{0,I_h} &\leq C\left(h^{2k+2} + \|f - f_h\|^2_{0,\mathcal{T}_h} + \|\theta_u\|^2_{0,I_h} \right.
				\\
				&\left. \quad\qquad\qquad\qquad +\,\, h^{-\frac{3}{2}}\|\theta_u\|^3_{0,I_h} + h^{-1}\|\theta_u\|^4_{0,I_h}\right).
			\end{aligned}
		\end{equation}
	If $\lambda > 1/2$, then 
	\begin{equation}\label{uL21}
		\begin{aligned}
			\frac{1}{2}\frac{{\rm d}}{{\rm d}t}\|\theta_u\|^2_{0,I_h} + \|\theta_w\|^2_{0,I_h} &\leq C\left(\left(1 + \epsilon^{-1} \right)h^{2k+2} + \|f - f_h\|^2_{0,\mathcal{T}_h} + \left(1 + \epsilon^{-1}h^{2k+1}\right)\|\theta_u\|^2_{0,I_h} \right.
			\\
			&\left. \quad +\,\, h^{-\frac{3}{2}}\|\theta_u\|^3_{0,I_h} + h^{-1}\|\theta_u\|^4_{0,I_h}\right).
		\end{aligned}
	\end{equation}
	\end{cor}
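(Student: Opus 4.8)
The plan is to derive both inequalities from the single estimate \eqref{4.11} of Lemma \ref{ul2}, splitting according to whether the prefactor $\left(\frac{2\lambda-1}{2}\right)$ multiplying the final (edge) term of \eqref{4.11} vanishes. When $\lambda = 1/2$ this weight is zero, so the entire last line of \eqref{4.11}---the only place where the edge quantity $\left(\sum_{i}h^{-1}[\![\theta_u]\!]^2_{i+1/2}\right)^{1/2}$ appears---drops out, and what is left is precisely \eqref{uL2}. The assumptions that $k$ is even and $N_x$ is odd play no role in the estimation itself; they are required only so that the projection $Q^x_{1/2}$ is well-defined (as recalled after \eqref{gp1}), which is what makes the decomposition \eqref{p1}, and hence Lemma \ref{ul2}, available in this regime.

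For the case $\lambda > 1/2$ the prefactor is strictly positive and the edge term of \eqref{4.11} must be absorbed. Here I would invoke Lemma \ref{conj}, which holds precisely because $\lambda > 1/2$, to bound the edge quantity by $C\epsilon^{-1/2}\|\theta_w\|_{0,I_h}$ (passing, if needed, from the $\ell^1$-type sum appearing in Lemma \ref{conj} to the $\ell^2$ quantity in \eqref{4.11}, since the $\ell^2$ norm is dominated by the $\ell^1$ norm). This turns the final term of \eqref{4.11} into $C\epsilon^{-1/2}\left(h^{k+1}+h^{k+1/2}\|\theta_u\|_{0,I_h}\right)\|\theta_w\|_{0,I_h}$. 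A Young inequality then peels off a small multiple of $\|\theta_w\|^2_{0,I_h}$, which is absorbed into the corresponding term on the left of \eqref{4.11}, leaving the remainder $C\epsilon^{-1}\left(h^{k+1}+h^{k+1/2}\|\theta_u\|_{0,I_h}\right)^2$. Expanding this square and controlling the cross term $h^{2k+3/2}\|\theta_u\|_{0,I_h}$ by the arithmetic--geometric mean inequality regroups everything into the two admissible shapes $(1+\epsilon^{-1})h^{2k+2}$ and $\epsilon^{-1}h^{2k+1}\|\theta_u\|^2_{0,I_h}$, which, together with the $\epsilon$-free terms already present in \eqref{4.11}, yield exactly \eqref{uL21}.

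I expect the only delicate point to be the bookkeeping of the $\epsilon$-powers: one must check that the Young absorption leaves a strictly positive coefficient on $\|\theta_w\|^2_{0,I_h}$ and that every surviving term on the right carries at worst the factor $\epsilon^{-1}$, with no spurious negative powers of $h$ attached to it (the $h^{2k+1}$ and $h^{2k+2}$ weights must survive the expansion of the square intact). The deeper reason for the case distinction is structural: the jump estimate of Lemma \ref{conj} is unavailable at $\lambda = 1/2$, but that is exactly the regime in which the troublesome edge term is annihilated by its vanishing prefactor, so the two cases are genuinely complementary and together exhaust the range $\lambda \ge 1/2$.
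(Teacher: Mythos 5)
Your proposal is correct and follows essentially the same route as the paper: for $\lambda=1/2$ the last term of \eqref{4.11} vanishes (with $k$ even, $N_x$ odd only ensuring $Q^x_{1/2}$ is well-defined), and for $\lambda>1/2$ one invokes Lemma \ref{conj} to bound the edge quantity by $C\epsilon^{-1/2}\|\theta_w\|_{0,I_h}$ and then absorbs a small multiple of $\|\theta_w\|^2_{0,I_h}$ via Young's inequality, exactly as in the paper's proof. Your extra care about passing from the $\ell^1$-type sum of Lemma \ref{conj} to the $\ell^2$ quantity in \eqref{4.11}, and about the cross term in the expanded square, only makes explicit steps the paper leaves implicit.
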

\begin{proof}
	For $\lambda = 1/2$ with $k$ even and $N_x$ odd, the last term in \eqref{4.11} is vanish. 
	
	Now, if $\lambda > 1/2$ then to estimate the last term of \eqref{4.11}, we use \eqref{conjecture}.
	\begin{equation*}
		\begin{aligned}
			&\left(\frac{2\lambda - 1}{2}\right)\left(h^{k+1} + h^{k+\frac{1}{2}}\|\theta_u\|_{0,I_h}\right)\left(\sum_{i=0}^{N_x-1}h^{-1}[\![\theta_u]\!]^2_{i+1/2}\right)^\frac{1}{2} 
			\\
			&\hspace{5cm}\leq C\epsilon^{-\frac{1}{2}}\left(h^{k+1} + h^{k+\frac{1}{2}}\|\theta_u\|_{0,I_h}\right)\|\theta_w\|_{0,I_h}
			\\
			&\hspace{5cm}\leq C(\delta)\epsilon^{-1}\left(h^{2k+2} + h^{2k+1}\|\theta_u\|^2_{0,I_h}\right) + \delta\|\theta_w\|^2_{0,I_h}.
		\end{aligned}
	\end{equation*}
For a sufficiently small $\delta > 0$, the desired result follows.
\end{proof}


	\subsection{Error estimates for Vlasov equation }
	
	Since the scheme given by \eqref{bh} with fluxes \eqref{flux} is consistent, \eqref{bh} holds for solution $(u,f)$ as well. Hence,
	by taking the difference, we obtain the error equation:
	\begin{equation*}\label{erro}
		\begin{aligned}
			\left(\frac{\partial }{\partial t}\left(f - f_h\right),\psi_h\right) + \mathcal{B}(u;f,\psi_h) - \mathcal{B}_{h}(u_h;f_h,\psi_h) = 0
			\quad\forall\, \psi_h \in \mathcal{Z}_h.  
		\end{aligned}
	\end{equation*}
	Setting $e_f := f - f_h$, we rewrite the error equation as  
	\begin{equation}\label{error}
		\begin{aligned}
			\left(\frac{\partial e_f}{\partial t},\psi_h\right) + a_h^0(e_f,\psi_h) + \mathcal{N}(u;f,\psi_h) - \mathcal{N}^h(u_h;f_h,\psi_h) = 0
			\quad\forall\, \psi_h \in \mathcal{Z}_h , 
		\end{aligned}
	\end{equation}
	where
	\begin{equation}\label{ae}
		a_h^0(e_f,\psi_h) := -\sum_{i,j}\int_{T_{ij}}e_f v\,\partial_x\psi_h\,{\rm d}v\,{\rm d}x
		-\sum_{j = 1}^{N_v}\sum_{i = 0}^{N_x-1}\int_{J_j}\left(\widehat{v e_f}[\![\psi_h]\!]\right)_{i+1/2,v}\,{\rm d}v,
	\end{equation}	
	\begin{equation}\label{N}
		\mathcal{N}(u;f,\psi_h) := -\sum_{i,j}\int_{T_{ij}} f (u - v)\,\partial_v\psi_h\,{\rm d}v\,{\rm d}x \,\, - \sum_{i = 1}^{N_x}\sum_{j=0}^{N_v-1}\int_{I_i}\left((u - v) f[\![\psi_h]\!]\right)_{x,j+1/2}\,{\rm d}x \, ,
	\end{equation}
	and
	\begin{equation}\label{Nh}
		\begin{aligned}
			\mathcal{N}^h(u_h;f_h,\psi_h) := -\sum_{i,j}\int_{T_{ij}} &f_h (u_h - v)\,\partial_v\psi_h\,{\rm d}v\,{\rm d}x 
			\\
			&- \sum_{i = 1}^{N_x}\sum_{j = 0}^{N_v-1}\int_{I_i}\left(\reallywidehat{(u_h - v) f_h}[\![\psi_h]\!]\right)_{x,j+1/2}\,{\rm d}x \, .
		\end{aligned}
	\end{equation}

\subsection{2D projection}
Inspired by \cite{liu2020optimal}, we now introduce a $2$D global projection $\Pi_{\lambda_1,\lambda_2}: \mathcal{C}^0(\mathcal{T}_h) \rightarrow \mathcal{Z}_h$  for  $\lambda_1,\lambda_2 > \frac{1}{2}$ 
as
\begin{equation}\label{4.21}
	\Pi_{\lambda_1,\lambda_2}g = \left(\Pi_{\lambda_1}\otimes\Pi_{\lambda_2}\right)g, \quad \forall \quad g \in \mathcal{C}^0(\overline{T_{ij}}), 1 \leq i \leq N_x, 1 \leq j \leq N_v.
\end{equation}
where
\begin{equation*}
	\Pi_{\lambda_1}g :=
	\left\{ \,
	\begin{aligned}
		 & Q_{\lambda_1}^xg \qquad \mbox{if} \quad v > 0,
		 \\
		 & Q_{1-\lambda_1}^xg \quad \mbox{if} \quad v < 0,
		 \\
		 & Q_1^xg \qquad \mbox{if}  \quad v = 0,
	\end{aligned}
	\right.
\end{equation*}
and 
\begin{equation*}
	\Pi_{\lambda_2}g :=
	\left\{ \,
	\begin{aligned}
		& Q_{\lambda_2}^vg \qquad \mbox{if} \quad (u(x) - v) > 0,
		\\
		& Q_{1-\lambda_2}^vg \quad \mbox{if} \quad (u(x) - v) < 0,
		\\
		& Q_1^vg \qquad \mbox{if}  \quad (u(x) - v) = 0.
	\end{aligned}
	\right.
\end{equation*}

More precisely, the above defined projection $\Pi_{\lambda_1,\lambda_2}$ can be explicitly defined as follows:
Let $\gamma_1, \Bbbk_1^+$ and $\Bbbk_1^-$ be the set of all indices $j$ for which $v$ changes sign, remains positive and remains negative, respectively inside $J_j$. Let $\gamma_2, \Bbbk_2^+$ and $\Bbbk_2^-$ be the set of all indices $i$ for which $\left(u(x) - v\right)$ changes sign, remains positive and remains negative, respectively inside $I_i$ for a fixed $v$.

The projection $\Pi_{\lambda_1,\lambda_2}g$ satisfies the following equality involving volume integrals: 
\begin{equation}
	\begin{aligned}
		\int_{T_{ij}}\Pi_{\lambda_1,\lambda_2}gz_h\,{\rm d}x\,{\rm d}v = \int_{T_{ij}}gz_h\,{\rm d}x\,{\rm d}v, \quad \forall\quad z_h \in \mathbb{Q}^{k-1}(T_{ij})
	\end{aligned}
\end{equation}
The projection $\Pi_{\lambda_1,\lambda_2}g$ satisfies the following equalities involving vertical boundary integrals: 
\begin{equation}
	\left\{
	\begin{aligned}
	\int_{J_j}\left(\Pi_{\lambda_1,\lambda_2}g\right)^-_{i+\frac{1}{2},v}(z_h)^-_{i+\frac{1}{2},v}\,{\rm d}v &= \int_{J_j}g^-_{i+\frac{1}{2},v}(z_h)^-_{i+\frac{1}{2},v}\,{\rm d}v, \quad \mbox{if} \quad j \in \gamma_1
    \\
	\int_{J_j}\left(\Pi_{\lambda_1,\lambda_2}g\right)^{\lambda_1}_{i+\frac{1}{2},v}(z_h)^-_{i+\frac{1}{2},v}\,{\rm d}v &= \int_{J_j}g^{\lambda_1}_{i+\frac{1}{2},v}(z_h)^-_{i+\frac{1}{2},v}\,{\rm d}v, \quad\mbox{if} \quad j \in \Bbbk_1^+
    \\	\int_{J_j}\left(\Pi_{\lambda_1,\lambda_2}g\right)^{1-\lambda_1}_{i-\frac{1}{2},v}(z_h)^+_{i-\frac{1}{2},v}\,{\rm d}v &= \int_{J_j}g^{1-\lambda_1}_{i-\frac{1}{2},v}(z_h)^+_{i-\frac{1}{2},v}\,{\rm d}v, \quad \mbox{if} \quad j \in \Bbbk_1^-
   \end{aligned}
   \right.
\end{equation}
for all $z_h \in \mathbb{Q}^{k-1}(T_{ij})$.
 The projection $\Pi_{\lambda_1,\lambda_2}g$ satisfies the following equalities involving horizontal boundary integrals:
\begin{equation}
	\left\{
	\begin{aligned}
	\int_{I_i}\left(\Pi_{\lambda_1,\lambda_2}g\right)^-_{x,j+\frac{1}{2}}(z_h)^-_{x,j+\frac{1}{2}}\,{\rm d}x &= \int_{I_i}g^-_{x,j+\frac{1}{2}}(z_h)^-_{x,j+\frac{1}{2}}\,{\rm d}x, \,\,\mbox{if}\,\, i \in \gamma_2
    \\
	\int_{I_i}\left(\Pi_{\lambda_1,\lambda_2}g\right)^{\lambda_2}_{x,j+\frac{1}{2}}(z_h)^-_{x,j+\frac{1}{2}}\,{\rm d}x &= \int_{I_i}g^{\lambda_2}_{x,j+\frac{1}{2}}(z_h)^-_{x,j+\frac{1}{2}}\,{\rm d}x, \,\,\mbox{if}\,\, i \in \Bbbk_2^+
    \\
	\int_{I_i}\left(\Pi_{\lambda_1,\lambda_2}g\right)^{1-\lambda_2}_{x,j-\frac{1}{2}}(z_h)^-_{x,j-\frac{1}{2}}\,{\rm d}x &= \int_{I_i}g^{1-\lambda_2}_{x,j-\frac{1}{2}}(z_h)^-_{x,j-\frac{1}{2}}\,{\rm d}x, \,\,\mbox{if}\,\, i \in \Bbbk_2^-
    \end{aligned}
    \right.
\end{equation}
for all $z_h \in \mathbb{Q}^{k-1}(T_{ij})$.
The projection $\Pi_{\lambda_1,\lambda_2}g$ satisfies the following equalities involving the boundary points:
\begin{equation}
	\left(\Pi_{\lambda_1,\lambda_2}g\right)^{-,-}_{i+\frac{1}{2},j+\frac{1}{2}} = g^{-,-}_{i+\frac{1}{2},j+\frac{1}{2}}, \quad \mbox{if} \quad \left(i,j\right) \in \left(\,\gamma_2,\gamma_1\,\right)
\end{equation}
\begin{equation}
	\left\{
	\begin{aligned}
	\left(\Pi_{\lambda_1,\lambda_2}g\right)^{\lambda_1,-}_{i+\frac{1}{2},j+\frac{1}{2}} &= g^{\lambda_1,-}_{i+\frac{1}{2},j+\frac{1}{2}}, \quad \mbox{if} \quad \left(i,j\right) \in (\gamma_2,\Bbbk_1^+)
    \\
	\left(\Pi_{\lambda_1,\lambda_2}g\right)^{1-\lambda_1,-}_{i-\frac{1}{2},j+\frac{1}{2}} &= g^{1-\lambda_1,-}_{i-\frac{1}{2},j+\frac{1}{2}}, \quad \mbox{if} \quad \left(i,j\right) \in (\gamma_2,\Bbbk_1^-)
    \end{aligned}
    \right.
\end{equation}
\begin{equation}
	\left\{
	\begin{aligned}
	\left(\Pi_{\lambda_1,\lambda_2}g\right)^{-,\lambda_2}_{i+\frac{1}{2},j+\frac{1}{2}} &= g^{-,\lambda_2}_{i+\frac{1}{2},j+\frac{1}{2}}, \quad \mbox{if} \quad \left(i,j\right) \in (\Bbbk_2^+,\gamma_1)
    \\
	\left(\Pi_{\lambda_1,\lambda_2}g\right)^{-,1-\lambda_2}_{i+\frac{1}{2},j-\frac{1}{2}} &= g^{-,1-\lambda_2}_{i+\frac{1}{2},j-\frac{1}{2}}, \quad \mbox{if} \quad \left(i,j\right) \in (\Bbbk_2^-,\gamma_1)
    \end{aligned}
    \right.
\end{equation}
\begin{equation}
	\left(\Pi_{\lambda_1,\lambda_2}g\right)^{\lambda_1,\lambda_2}_{i+\frac{1}{2},j+\frac{1}{2}} = g^{\lambda_1,\lambda_2}_{i+\frac{1}{2},j+\frac{1}{2}}, \quad \mbox{if} \quad \left(i,j\right) \in (\Bbbk_2^+,\Bbbk_1^+)
\end{equation}
\begin{equation}
	\left(\Pi_{\lambda_1,\lambda_2}g\right)^{\lambda_1,1-\lambda_2}_{i+\frac{1}{2},j-\frac{1}{2}} = g^{\lambda_1,1-\lambda_2}_{i+\frac{1}{2},j-\frac{1}{2}}, \quad \mbox{if} \quad \left(i,j\right) \in (\Bbbk_2^-,\Bbbk_1^+)
\end{equation}
\begin{equation}
	\left(\Pi_{\lambda_1,\lambda_2}g\right)^{1-\lambda_1,\lambda_2}_{i-\frac{1}{2},j+\frac{1}{2}} = g^{1-\lambda_1,\lambda_2}_{i-\frac{1}{2},j+\frac{1}{2}}, \quad \mbox{if} \quad \left(i,j\right) \in (\Bbbk_2^+,\Bbbk_1^-)
\end{equation}
\begin{equation}
	\left(\Pi_{\lambda_1,\lambda_2}g\right)^{1-\lambda_1,1-\lambda_2}_{i-\frac{1}{2},j-\frac{1}{2}} = g^{1-\lambda_1,1-\lambda_2}_{i-\frac{1}{2},j-\frac{1}{2}}, \quad \mbox{if} \quad \left(i,j\right) \in (\Bbbk_2^-,\Bbbk_1^-).
\end{equation}
Here, we have used the following notations:
\begin{equation*}
	\begin{aligned}
		g^{\varkappa,\varsigma}_{i+\frac{1}{2},j+\frac{1}{2}} &:= \varkappa\varsigma g^{-,-}_{i+\frac{1}{2},j+\frac{1}{2}} + \varkappa\left(1-\varsigma\right)g^{-,+}_{i+\frac{1}{2},j+\frac{1}{2}} 
		\\
		&\quad+ \left(1-\varkappa\right)\varsigma g^{+,-}_{i+\frac{1}{2},j+\frac{1}{2}} + \left(1-\varkappa\right)\left(1-\varsigma\right)g^{+,+}_{i+\frac{1}{2},j+\frac{1}{2}},
	\end{aligned}
\end{equation*}
\begin{equation*}
	\begin{aligned}
		g^{\varkappa}_{i-1/2,v} &:= \left(1-\varkappa\right)\left(g\right)^+_{i-1/2,v} + \varkappa\left(g\right)^-_{i-1/2,v},
		\\
		g^{\varsigma}_{x,j-1/2} &:= \left(1-\varsigma\right)\left(g\right)^+_{x,j-1/2} + \varsigma\left(g\right)^-_{x,j-1/2},
		\\
		g^{\varkappa,-}_{i-1/2,j-1/2} &:= \left(1-\varkappa\right)g\left(x^+_{i-1/2},v^-_{j-1/2}\right) + \varkappa g\left(x^-_{i-1/2},v^-_{j-1/2}\right),
		\\
		g^{-,\varkappa}_{i-1/2,j-1/2} &:= \left(1-\varkappa\right)g\left(x^-_{i-1/2},v^+_{j-1/2}\right) + \varkappa g\left(x^-_{i-1/2},v^-_{j-1/2}\right),
		\\
		g^{\pm,\pm}_{i-1/2,j-1/2} &:= g\left(x^\pm_{i-1/2},v^\pm_{j-1/2}\right),
	\end{aligned}
\end{equation*}
with $x^\pm_{i-1/2}$ denoting $\lim_{\varrho \to 0^+}\left(x_{i-1/2}\pm \varrho\right)$ and $\{v^\pm_{j-1/2}\}$ denoting $\lim_{\varrho \to 0^+}\left(v_{j-1/2}\pm \varrho\right)$. In the above notations the parenthesis $\varkappa$ takes the value $\lambda_1$ and $1-\lambda_1$ whereas $\varsigma$ takes the value $\lambda_2$ and $1-\lambda_2$.

In the following lemma, we state the approximation property of the global projection $\Pi_{\lambda_1,\lambda_2}$ whose proof can be found in \cite[Lemma 3.1, p. 8]{liu2020optimal}.
\begin{lem}\label{westimate}
	For $f \in H^{k+1}(\Omega)$, there exists a unique $\Pi_{\lambda_1,\lambda_2}f$ defined by \eqref{4.21} and satisfying the following approximation property:
	\begin{equation}\label{wprojection}
		\|f - \Pi_{\lambda_1,\lambda_2}f\|_{0,\mathcal{T}_h} + h^\frac{1}{2}\|f - \Pi_{\lambda_1,\lambda_2}f\|_{0,\Gamma_h} \leq C\,h^{k+1}\|f\|_{k+1,\Omega},
	\end{equation}
where $\|\cdot\|_{0,\Gamma_h}^2 = \|\cdot\|^2_{0,\Gamma_x} + \|\cdot\|^2_{0,\Gamma_v}$.
\end{lem}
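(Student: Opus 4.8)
The plan is to exploit the tensor-product structure $\Pi_{\lambda_1,\lambda_2}=\Pi_{\lambda_1}\otimes\Pi_{\lambda_2}$ and reduce everything to the one-dimensional approximation property \eqref{uprojection} of the generalized Gauss--Radau projection $Q^x_\lambda$, together with its obvious analogue $Q^v_\lambda$ in the $v$-variable. For existence and uniqueness I would argue element by element: on each $T_{ij}$ the factor $\Pi_{\lambda_1}$ is fixed by the sign of $v$ on $J_j$ and the factor $\Pi_{\lambda_2}$ by the sign of $u(x)-v$ on $I_i$, so on every cell $\Pi_{\lambda_1,\lambda_2}$ is the composition of two one-dimensional projections, each uniquely defined for a parameter $>1/2$, or by the standard Gauss--Radau projection $Q_1$ on the (at most one) cell where the relevant sign changes, i.e.\ on the index sets $\gamma_1$ and $\gamma_2$. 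Since $Q_1$ is also uniquely solvable and the tensor product of two uniquely defined one-dimensional projections is uniquely defined on $\mathbb{Q}^k(T_{ij})$, the projection $\Pi_{\lambda_1,\lambda_2}f$ exists and is unique on each element.

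For the volume estimate I would use the telescoping identity
\[
f-\Pi_{\lambda_1,\lambda_2}f=\left(I-\Pi_{\lambda_1}\right)f+\Pi_{\lambda_1}\left(I-\Pi_{\lambda_2}\right)f,
\]
where $\Pi_{\lambda_1}$ acts in $x$ and $\Pi_{\lambda_2}$ in $v$. The first term is controlled purely by the one-dimensional $x$-estimate in \eqref{uprojection}: on each $T_{ij}$ the sign of $v$ is fixed (or we sit on the single $\gamma_1$-cell, where $Q_1^x$ carries the same order), so applying the $1$D bound in $x$ and integrating over $v$ by Fubini produces a factor $h_x^{k+1}$ times a seminorm of $f$. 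The mixed term is handled by first invoking the $L^2$-stability $\|\Pi_{\lambda_1}g\|_{0,I_h}\le C\|g\|_{0,I_h}$ of the one-dimensional projection, uniform in $h$, and then applying the $1$D $v$-estimate to $(I-\Pi_{\lambda_2})f$, which yields a factor $h_v^{k+1}$. Summing over all elements and using $h=\max(h_x,h_v)$ together with quasi-uniformity collapses both contributions into $C\,h^{k+1}\|f\|_{k+1,\Omega}$.

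For the skeleton term $h^{1/2}\|f-\Pi_{\lambda_1,\lambda_2}f\|_{0,\Gamma_h}$ with $\Gamma_h=\Gamma_x\cup\Gamma_v$, I would repeat the same splitting on each portion of the edge set. On the vertical edges $\Gamma_x$ the first term uses the trace estimate $\|Q^x_\lambda\Upsilon-\Upsilon\|_{0,\Gamma_x}\le C h_x^{k+1/2}|\Upsilon|_{k+1,I_h}$ from \eqref{uprojection}, while the mixed term uses the trace-stability of $\Pi_{\lambda_1}$ on edges combined with the interior $v$-estimate; the horizontal edges $\Gamma_v$ are treated symmetrically with the roles of the two variables interchanged. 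The extra weight $h^{1/2}$ exactly compensates the half-power loss in the trace bounds, again giving $C\,h^{k+1}\|f\|_{k+1,\Omega}$, and adding the two contributions completes the estimate.

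The main obstacle is the uniform (in $h$) $L^2$- and trace-stability of the one-dimensional generalized Gauss--Radau projection $Q_\lambda$ required for the mixed term: unlike the orthogonal $L^2$-projection, this stability is not automatic, since $Q_\lambda$ is pinned down by one-sided flux relations rather than orthogonality, and it is the genuinely non-trivial ingredient established in the cited works. A secondary technical point is keeping the constant $C$ independent of the element-dependent parameter choices $\lambda_1,1-\lambda_1,1$ (and likewise for $\lambda_2$) across the sign-change cells $\gamma_1,\gamma_2$; this is harmless because there are only finitely many admissible parameter values, each delivering the same order of approximation.
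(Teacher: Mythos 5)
First, a point of reference: the paper does not prove Lemma \ref{westimate} at all — it is quoted from \cite[Lemma 3.1, p. 8]{liu2020optimal} — so your sketch has to stand on its own, and it does not: it has two genuine gaps.

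The first gap is the existence/uniqueness argument. You cannot argue ``element by element.'' For $\lambda_1,\lambda_2>1/2$ (and $\neq 1$) the defining edge and corner conditions involve two-sided weighted traces such as $g^{\lambda_1}_{i+1/2,v}=\lambda_1 g^-_{i+1/2,v}+(1-\lambda_1)g^+_{i+1/2,v}$, so the conditions imposed on $T_{ij}$ involve the values of $\Pi_{\lambda_1,\lambda_2}f$ on the neighbouring cells; the linear system that determines the projection is global (periodic, circulant-type), and its unique solvability is precisely where the hypothesis $\lambda_i>1/2$ enters — this is the content of \cite[Lemma 4.1]{liu2015} in one dimension and of the cited two-dimensional lemma. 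A cell-local count of conditions against $\dim\mathbb{Q}^k(T_{ij})$ cannot detect this; uniqueness is simply not a local statement here.

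The second gap concerns the telescoping identity $f-\Pi_{\lambda_1,\lambda_2}f=(I-\Pi_{\lambda_1})f+\Pi_{\lambda_1}(I-\Pi_{\lambda_2})f$ together with the stability you invoke. This presumes that $\Pi_{\lambda_1,\lambda_2}$ is the composition of two fixed one-dimensional operators, but the branch of $\Pi_{\lambda_2}$ (namely $Q^v_{\lambda_2}$, $Q^v_{1-\lambda_2}$ or $Q^v_1$) depends on $x$ through the sign of $u(x)-v$, hence on which cell $J_j$ contains $u(x)$. As $x$ crosses the points where $u(x)$ passes a node $v_{j+1/2}$ — points generically interior to cells $I_i$ — the operator $\Pi_{\lambda_2}$ changes, so $(I-\Pi_{\lambda_2})f$ is discontinuous in $x$, $\partial_x$ does not commute with $\Pi_{\lambda_2}$, and the composition $\Pi_{\lambda_1}\Pi_{\lambda_2}$ need not satisfy the elementwise volume/edge/corner conditions listed after \eqref{4.21}, which are the operative definition used later in the error analysis (for instance to make terms vanish in Lemmas \ref{K1estimate} and \ref{K2estimate}). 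Incidentally, your claim that at most one cell carries a sign change is false for $\gamma_2$: the equation $u(x)=v$ can have many solutions. Finally, the mixed term cannot be controlled by ``uniform $L^2$-stability'': a Gauss--Radau-type projection is built from traces and point values, so it is not even defined on $L^2$, and any usable stability bound must involve derivatives of its argument — exactly what the branch switching destroys. Since you defer that stability (and with it the quantitative content) to the cited works, what remains in your sketch is the routine part; the hard parts — global solvability, the moving sign-change sets tied to $u(x)$, and uniform-in-$h$ bounds — are exactly what the proof in \cite{liu2020optimal} supplies.
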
 


	Using the projection, split $f - f_h$ as
	\begin{equation}\label{p}
		e_f := f - f_h := \left(\Pi_{\lambda_1,\lambda_2}f - f_h\right) - \left(\Pi_{\lambda_1,\lambda_2}f - f\right) =: \theta_f - \eta_f,
	\end{equation}
	where
	\[
	\theta_f = \Pi_{\lambda_1,\lambda_2}f - f_h \quad \mbox{and} \quad \eta_f = \Pi_{\lambda_1,\lambda_2}f - f.
	\]

	\begin{lem}\label{lemn}
		Let $u \in C^0(I), f \in C^0(\Omega)$ and $f_h \in \mathcal{Z}_h$ with $k \geq 0$. Then, the following identity holds  true:
		\begin{align*}
			\mathcal{N}(u;f,\theta_f) &- \mathcal{N}^h(u_h;f_h,\theta_f) = \sum_{i,j}\int_{I_i}\left(\frac{2\lambda_2 - 1}{2}\right)\left\vert(u_h - v)\right\vert[\![\theta_f]\!]^2_{x,j-1/2}\,{\rm d}x
			\\
			& + \sum_{i,j}\int_{T_{ij}} \left((u-u_h)\,\partial_v f\,\theta_f - \frac{1}{2}\theta^2_f\right)\,{\rm d}v\,{\rm d}x + \mathcal{K}^2(u_h - v,f,\theta_f),
		\end{align*}
		where
		\begin{equation}\label{K2}
			\mathcal{K}^2(u_h - v,f,\theta_f) = \sum_{i,j}\int_{T_{ij}} \eta_f (u_h - v)\,\partial_v \theta_f\,{\rm d}v\,{\rm d}x + \sum_{i,j}\int_{I_i}\left(\reallywidehat{\left(u_h - v\right)\eta_f}[\![\theta_f]\!]\right)_{x,j-1/2}\,{\rm d}x\, .
		\end{equation}
	\end{lem}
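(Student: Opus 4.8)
The plan is to exploit consistency of the scheme together with the decomposition $e_f = \theta_f - \eta_f$ from \eqref{p}, and to reduce everything to two differences that can each be handled by integration by parts in $v$. Since the exact $f$ is continuous, the jump $[\![f]\!]$ vanishes and the numerical fluxes in \eqref{flux} built on $f$ collapse to the exact flux, so that $\mathcal{N}(u;f,\theta_f) = \mathcal{N}^h(u;f,\theta_f)$ by consistency of \eqref{N}--\eqref{Nh}. I would then insert the intermediate term $\mathcal{N}^h(u_h;f,\theta_f)$ and write
\[
\mathcal{N}(u;f,\theta_f) - \mathcal{N}^h(u_h;f_h,\theta_f) = \big[\mathcal{N}^h(u;f,\theta_f) - \mathcal{N}^h(u_h;f,\theta_f)\big] + \big[\mathcal{N}^h(u_h;f,\theta_f) - \mathcal{N}^h(u_h;f_h,\theta_f)\big].
\]

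For the first bracket the transported function $f$ is the same and continuous; only the velocity differs. Using $(u-v)-(u_h-v) = u-u_h$, which is independent of $v$, the difference reduces to $-\sum_{i,j}\int_{T_{ij}}f(u-u_h)\partial_v\theta_f\,{\rm d}v\,{\rm d}x - \sum_{i,j}\int_{I_i}\big((u-u_h)f[\![\theta_f]\!]\big)_{x,j+1/2}\,{\rm d}x$. I would integrate the volume term by parts in $v$; the edge contributions it produces cancel exactly against the remaining flux term (here I use periodicity in $x$ and compact support in $v$ to drop the outer boundary), leaving precisely $\sum_{i,j}\int_{T_{ij}}(u-u_h)\,\partial_v f\,\theta_f\,{\rm d}v\,{\rm d}x$, which is the second target term.

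For the second bracket the velocity $u_h$ is fixed, hence the sign of $(u_h-v)$ and the weights in \eqref{flux} are fixed, so $\mathcal{N}^h(u_h;\cdot,\theta_f)$ is linear in its second slot; by \eqref{p} the bracket equals $\mathcal{N}^h(u_h;\theta_f,\theta_f) - \mathcal{N}^h(u_h;\eta_f,\theta_f)$. The term $-\mathcal{N}^h(u_h;\eta_f,\theta_f)$ is, after reindexing $j+\tfrac12 \to j-\tfrac12$ (again by compact support in $v$), exactly $\mathcal{K}^2(u_h-v,f,\theta_f)$ of \eqref{K2}. For the diagonal term I integrate by parts in $v$: since $\partial_v(u_h-v)=-1$, the volume part yields $-\tfrac12\sum_{i,j}\int_{T_{ij}}\theta_f^2\,{\rm d}v\,{\rm d}x$, and I then combine the boundary terms with the flux term $-\big(\reallywidehat{(u_h-v)\theta_f}[\![\theta_f]\!]\big)$. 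Writing $\reallywidehat{(u_h-v)\theta_f} = (u_h-v)\{\theta_f\} + \tfrac{1-2\lambda_2}{2}|u_h-v|[\![\theta_f]\!]$ and using $\{\theta_f\}[\![\theta_f]\!] = \tfrac12[\![\theta_f^2]\!]$, the average part cancels the edge terms while the jump part survives as $\sum_{i,j}\int_{I_i}\big(\tfrac{2\lambda_2-1}{2}\big)|u_h-v|[\![\theta_f]\!]^2_{x,j-1/2}\,{\rm d}x$; this is exactly the mechanism already used for $b_h(w_h,w_h)$. Collecting the three contributions gives the claimed identity.

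The main obstacle is the bookkeeping in the two integration-by-parts steps: tracking the $\pm$ traces at each horizontal edge $v_{j\pm1/2}$, and verifying that the edge terms generated by integrating by parts \emph{cancel} the consistent/numerical flux terms rather than doubling them, so that only the stabilizing jump-square term with coefficient $\tfrac{2\lambda_2-1}{2}$ remains. Care is also needed that $(u_h-v)$ is single-valued across a horizontal edge -- it is, being independent of $v$ and polynomial (hence continuous) in $x$ on each $I_i$ -- which is precisely what allows the average to split as $\{(u_h-v)\theta_f\} = (u_h-v)\{\theta_f\}$.
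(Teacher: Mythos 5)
Your proposal is correct and follows essentially the same route as the paper: inserting the intermediate term $\mathcal{N}^h(u_h;f,\theta_f)$ (legitimate by consistency, since $[\![f]\!]=0$ and $(u_h-v)$ is single-valued across horizontal edges) is exactly the paper's device of adding and subtracting $(u_h-v)f$ in the volume and flux terms, and your subsequent steps -- integration by parts in $v$ with cancellation of the $(u-u_h)f$ edge terms, and the jump identity turning the diagonal term into $-\tfrac12\sum_{i,j}\int_{T_{ij}}\theta_f^2$ plus the $\tfrac{2\lambda_2-1}{2}\left\vert u_h-v\right\vert[\![\theta_f]\!]^2$ stabilization -- coincide with the paper's cancellations $T_{31}+T_{12}=0$ and the combination of $T_{222}+T_{32}$. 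The only difference is organizational: you exploit linearity of $\mathcal{N}^h(u_h;\cdot,\theta_f)$ in its second slot, while the paper splits $f-f_h=\theta_f-\eta_f$ term by term, but the underlying algebra is identical.
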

	
	\begin{proof}
		Proof is similar to the \cite[Lemma 4.8]{ayuso2009discontinuous} with some changes in boundary terms. So, we are providing a short proof.
	Subtracting the non-linear terms \eqref{N} and \eqref{Nh}, we arrive at 
	\begin{equation}\label{sasa}
		\begin{aligned}
			&\mathcal{N}\left(u;f,\theta_f\right) - \mathcal{N}^h\left(u_h;f_h,\theta_f\right) = -\sum_{i,j}\int_{T_{ij}}\left[f\left(u - v\right) - f_h\left(u_h - v\right)\right]\partial_v\theta_f \,{\rm d}x\,{\rm d}v 
			\\
			&\qquad- \sum_{i,j}\int_{I_i}\left(\left(\left(u - v\right)f - \reallywidehat{\left(u_h - v\right) f_h}\right)[\![\theta_f]\!]\right)_{x,j-1/2}\,{\rm d}x 
			= T_1 + T_2 + T_3.
		\end{aligned}
	\end{equation}
	After adding and subtracting $\left(u_h - v\right)f$ in the volume term, we obtain 
	\begin{equation*}
		\begin{aligned}
			T_1 &= -\sum_{i,j}\int_{T_{ij}}\left(f\left(u - u_h\right) \right)\partial_v\theta_f \,{\rm d}x\,{\rm d}v,
			\\
			T_2 &= -\sum_{i,j}\int_{T_{ij}}\left(\left(u_h - v\right)\left( f - f_h\right)\right)\partial_v\theta_f \,{\rm d}x\,{\rm d}v,
			\\
			T_3 &= - \sum_{i,j}\int_{I_i}\left(\left(\left(u - v\right)f - \reallywidehat{\left(u_h - v\right) f_h}\right)[\![\theta_f]\!]\right)_{x,j-1/2}\,{\rm d}x.
		\end{aligned}
	\end{equation*}
	An integration by parts with respect to the $v$ variable yields
	\begin{equation*}
		\begin{aligned}
			T_1 &= \sum_{i,j}\int_{T_{ij}}\left(u - u_h\right)\,\partial_v f \theta_f\,{\rm d}x\,{\rm d}v + \sum_{i,j}\int_{I_i}\left(u - u_h\right)  f [\![\theta_f]\!]_{x,j-1/2}\,{\rm d}x
			\\
			& =: T_{11} + T_{12}.
		\end{aligned}
	\end{equation*}
	We rewrite the term $T_2$ as
	\begin{equation*}
		\begin{aligned}
			T_2 &= - \sum_{i,j}\int_{T_{ij}} \left(f - f_h\right)\left(u_h -  v\right)\,\partial_v\theta_f\,{\rm d}x\,{\rm d}v 
			\\
			& =  \sum_{i,j}\int_{T_{ij}} \eta_f\left(u_h -  v\right)\,\partial_v\theta_f\,{\rm d}x\,{\rm d}v - \frac{1}{2}\sum_{i,j}\int_{T_{ij}} \left(u_h -  v\right)\,\partial_v\theta_f^2\,{\rm d}x\,{\rm d}v  
			\\
			& =: T_{21} + T_{22}.
		\end{aligned}
	\end{equation*}
	An integration by parts with respect to the $v$ variable in the term $T_{22}$ yields
	\begin{equation*}
		\begin{aligned}
			T_{22}  = - \frac{1}{2}\sum_{i,j}\int_{T_{ij}}\theta_f^2\,{\rm d}x\,{\rm d}v + \sum_{i,j}\int_{I_i}\left(\frac{\left(u_h - v\right)}{2}[\![\theta_f^2]\!]\right)_{x,j-1/2}\,{\rm d}x
			=: T_{221} + T_{222}.
		\end{aligned}
	\end{equation*}
	We finally deal with the boundary term $T_3$ as follows:
	\begin{equation*}
		\begin{aligned}
			T_3 &= - \sum_{i,j}\int_{I_i}\left(\left(\left(u - v\right)f - \reallywidehat{\left(u_h - v\right) f_h}\right)[\![\theta_f]\!]\right)_{x,j-1/2}\,{\rm d}x
			\\
			&= - \sum_{i,j}\int_{I_i}\left(\left(\left(u - v\right)f  - \left(u_h - v\right) f \right)[\![\theta_f]\!]\right)_{x,j-1/2}\,{\rm d}x
			\\
			& \quad- \sum_{i,j}\int_{I_i}\left(\left( \reallywidehat{\left(u_h - v\right) f} - \reallywidehat{\left(u_h - v\right) f_h}\right)[\![\theta_f]\!]\right)_{x,j-1/2}\,{\rm d}x
			\\
			&= - \sum_{i,j}\int_{I_i}\left(\left(u - u_h\right)f [\![\theta_f]\!]\right)_{x,j-1/2}\,{\rm d}x 
			- \sum_{i,j}\int_{I_i}\left(\reallywidehat{\left(u_h - v\right) \left(f - f_h\right)} [\![\theta_f]\!]\right)_{x,j-1/2}\,{\rm d}x
			\\
			& = T_{31} + T_{32}.
		\end{aligned}
	\end{equation*}
	From $T_{12},T_{222},T_{31}$ and $T_{32}$ terms, we obtain 
	\begin{equation*}
		\begin{aligned}
			T_{31} + T_{12} = 0.
		\end{aligned}
	\end{equation*}
	\begin{equation*}
		\begin{aligned}
			T_{222} + T_{32} &= \sum_{i,j}\int_{I_i}\left(\left(u_h - v\right)\{\theta_f\}[\![\theta_f]\!] - \left(u_h - v\right)\left\{f - f_h\right\} [\![\theta_f]\!]\right)_{x,j-1/2}\,{\rm d}x
			\\
			&\quad + \sum_{i,j}\int_{I_i}\left(\frac{2\lambda_2 - 1}{2}\right)\left(\left\vert\left(u_h - v\right)\right\vert[\![f - f_h]\!]\,[\![\theta_f]\!]\right)_{x,j-1/2}\,{\rm d}x
			\\
			&= \sum_{i,j}\int_{I_i}\left(\frac{2\lambda_2 - 1}{2}\right)\left(\left\vert\left(u_h - v\right)\right\vert[\![\theta_f]\!]^2 + \reallywidehat{\left(u_h - v\right) \eta_f}[\![\theta_f]\!]\right)_{x,j-1/2}\,{\rm d}x.
		\end{aligned}
	\end{equation*}
	Here, in first step we use $[\![ab]\!] = \{a\}[\![b]\!] + [\![a]\!]\{b\}$. Now, after putting all identities in equation \eqref{sasa}, we conclude the proof.	
\end{proof}

	
	After choosing $\psi_h = \theta_f$ in the error equation \eqref{error} and a use of equation \eqref{p} and Lemma \ref{lemn}, we rewrite equation \eqref{error} in $\theta_f$ as
	\begin{equation}\label{err1}
		\begin{aligned}
			&\frac{1}{2}\frac{{\rm d}}{{\rm d}t}\|\theta_f\|^2_{0,\mathcal{T}_h} + \sum_{i,j}\int_{J_j}\left(\frac{2\lambda_1 - 1}{2}\right)\left\vert v\right\vert[\![\theta_f]\!]^2_{i-1/2,v}\,{\rm d}v  + \sum_{i,j}\int_{I_i}\left(\frac{2\lambda_2 - 1}{2}\right)\left\vert u_h - v\right\vert[\![\theta_f]\!]^2_{x,j-1/2}\,{\rm d}x
			\\
			& = \left((\eta_f)_t, \theta_f\right) - \mathcal{K}^1(v,\eta_f,\theta_f) - \sum_{i,j}\int_{T_{ij}} \left((u-u_h)\,\partial_v f\,\theta_f - \frac{1}{2}\theta^2_f\right)\,{\rm d}v\,{\rm d}x 
			\\
			&\quad- \mathcal{K}^2(u_h - v,f,\theta_f) ,
		\end{aligned}
	\end{equation}	
	where,
	\begin{equation}\label{K1}
		\mathcal{K}^1(v,\eta_f,\theta_f) = \sum_{i,j}\int_{T_{ij}} \eta_f v\,\partial_x \theta_f\,{\rm d}x\,{\rm d}v + \sum_{i,j}\int_{J_j}\left(\widehat{v\,\eta_f}[\![\theta_f]\!]\right)_{i-1/2,v}\,{\rm d}v\, .
	\end{equation}


	To show the estimate on $e_f := \theta_f - \eta_f$, it is enough to show the estimate on $\theta_f$ since from \eqref{wprojection} estimate on $\eta_f$ are known.

	Let $J^{i+1/2} = \{x_{i+1/2}\}\times J$ and $I^{j+1/2} = I \times \{v_{j+1/2}\}$.
	
	\begin{lem}\label{K1estimate}
		Let $k \geq 1$ and let $f \in C^0([0,T]; W^{1,\infty}(\Omega) \cap H^{k+1}(\Omega))$ be the solution of \eqref{eq:continuous-model}. Let $f_h(t) \in \mathcal{Z}_h$ be its approximation satisfying \eqref{bh} and let $\mathcal{K}^1(v,\eta_f,\theta_f)$ be defined as in \eqref{K1}. Assume that the partition $\mathcal{T}_h$ is constructed so that none of the components of $v$ vanish inside any element. Then, the following estimate holds:
		\begin{equation}\label{k1estimate}
			|\mathcal{K}^1(v,\eta_f,\theta_f)| \leq Ch^{k+1}\|f\|_{k+1,\Omega} \|\theta_f\|_{0,\mathcal{T}_h} \quad \forall \quad t \in [0,T].
		\end{equation} 
		
	\end{lem}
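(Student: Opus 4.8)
The plan is to exploit the defining relations of the global projection $\Pi_{\lambda_1,\lambda_2}$ so that the principal part of $\mathcal{K}^1$ cancels identically, leaving a remainder that is governed by the approximation estimate \eqref{wprojection}. First I would use the hypothesis that $v$ keeps a fixed sign on every velocity cell $J_j$: by the definition of $\Pi_{\lambda_1}$ the $x$-factor of the projection then reduces on $J_j$ to the single one-dimensional projection $Q^x_{\lambda_1}$ (if $v>0$) or $Q^x_{1-\lambda_1}$ (if $v<0$). Writing $\Pi_{\lambda_2}f$ for the partial projection acting only in the velocity variable, I would split, for fixed $v$,
\[
\eta_f(\cdot,v)=\bigl(Q^x_{\lambda_1}-I\bigr)\bigl(\Pi_{\lambda_2}f\bigr)(\cdot,v)+\bigl(\Pi_{\lambda_2}f-f\bigr)(\cdot,v)=:\zeta+\xi_f,
\]
so that $\zeta(\cdot,v)$ is, for each frozen $v$, exactly the one-dimensional $Q^x_{\lambda_1}$-projection error of the $x$-function $x\mapsto(\Pi_{\lambda_2}f)(x,v)$, while $\xi_f$ is the pure velocity-projection error.

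The key observation is that the $\zeta$-contribution to $\mathcal{K}^1$ vanishes exactly. For the volume term this is because, for fixed $v$, $v\,\partial_x\theta_f(\cdot,v)\in\mathbb{P}^{k-1}(I_i)$ in $x$, so the orthogonality \eqref{gp} gives $\int_{I_i}\zeta\,v\,\partial_x\theta_f\,{\rm d}x=0$; for the interface term the flux relation \eqref{gp1}, rewritten as $(1-\lambda_1)\zeta^{+}+\lambda_1\zeta^{-}=\widehat{Q^x_{\lambda_1}(\Pi_{\lambda_2}f)}-\bigl[(1-\lambda_1)(\Pi_{\lambda_2}f)^{+}+\lambda_1(\Pi_{\lambda_2}f)^{-}\bigr]=0$, shows that the generalized numerical flux $\widehat{v\zeta}$ vanishes at every vertical interface (and similarly with $\lambda_1\leftrightarrow 1-\lambda_1$ when $v<0$). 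Hence $\mathcal{K}^1(v,\eta_f,\theta_f)=\mathcal{K}^1(v,\xi_f,\theta_f)$, and the whole estimate is reduced to controlling the contribution of the velocity-projection error $\xi_f$ alone.

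It then remains to bound $\mathcal{K}^1(v,\xi_f,\theta_f)$ by $Ch^{k+1}\|f\|_{k+1,\Omega}\|\theta_f\|_{0,\mathcal{T}_h}$. Here I would use that $f$ is continuous across the vertical edges and that $\Pi_{\lambda_2}$ acts only in $v$, so $[\![\xi_f]\!]$ vanishes on $\Gamma_x$ away from the (lower-dimensional, controllable) set where $u(x)=v$; this annihilates the flux part of the remainder. The volume part I would treat by writing $v=\bar v_j+(v-\bar v_j)$ on $J_j$ and invoking the velocity orthogonality $\int_{J_j}\xi_f\,q\,{\rm d}v=0$ for $q\in\mathbb{P}^{k-1}(J_j)$, combined with \eqref{wprojection}, the trace inequality \eqref{traceeqn}, the inverse inequality \eqref{inverseeqn} and the quasi-uniformity of $\mathcal{T}_h$. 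The hard part will be to recover the optimal power $h^{k+1}$ rather than $h^{k}$: the naive route, which moves $\partial_x$ onto $\theta_f$ and then applies the inverse inequality, loses one power of $h$, so the missing power must be regained either from the factor $(v-\bar v_j)=O(h_v)$ that multiplies the genuinely non-orthogonal part or from a further use of the volume and boundary orthogonality of $\Pi_{\lambda_1,\lambda_2}$ (so as to avoid the inverse inequality on the principal piece), together with the smoothness of the exact solution $f$. This bookkeeping of $h$-powers, rather than any single inequality, is where the argument is most delicate.
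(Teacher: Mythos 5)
Your proposal is incomplete, and one of its intermediate claims is false. What is correct: the decomposition $\eta_f=\zeta+\xi_f$ with $\zeta=(Q^x_{\lambda_1}-I)(\Pi_{\lambda_2}f)$ (and $\lambda_1$ replaced by $1-\lambda_1$ where $v<0$), $\xi_f=\Pi_{\lambda_2}f-f$, and the exact vanishing of the $\zeta$-contribution via the slice-wise orthogonality \eqref{gp} and the flux relation \eqref{gp1} matched to \eqref{flux} under the sign hypothesis on $v$ --- this part is sound. The trouble is the remainder $\xi_f$. The assertion that $[\![\xi_f]\!]=0$ on $\Gamma_x$ ``annihilates the flux part'' is wrong: continuity of $\xi_f$ across vertical edges only makes the numerical flux single-valued, $\widehat{v\xi_f}=v\,\xi_f$; the edge integrals $\int_{J_j} v\,\xi_f\,[\![\theta_f]\!]_{i-1/2,v}\,{\rm d}v$ survive, and by \eqref{traceeqn} together with the edge estimate of type \eqref{wprojection} they are of size $h^{k+1/2}\cdot h^{-1/2}\|\theta_f\|_{0,\mathcal{T}_h}=h^{k}\|\theta_f\|_{0,\mathcal{T}_h}$, exactly the suboptimal order you were worried about. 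In fact, if you sum the $\xi_f$-volume and $\xi_f$-edge terms over $i$ and integrate by parts in $x$ (periodicity plus continuity of $\xi_f$), the boundary terms generated by the volume integrals cancel the edge integrals and the entire $\xi_f$-contribution collapses to $-\sum_{i,j}\int_{T_{ij}} v\,E^v(\partial_x f)\,\theta_f\,{\rm d}x\,{\rm d}v$, where $E^v g:=\Pi_{\lambda_2}g-g$ (using that $\Pi_{\lambda_2}$ commutes with $\partial_x$ cell-wise). The $v$-orthogonality of $E^v(\partial_x f)$ against $\mathbb{P}^{k-1}(J_j)$ does not kill this term, because $v\,\theta_f$ has degree $k+1$ in $v$; what remains is bounded only by $C\|E^v(\partial_x f)\|_{0,\mathcal{T}_h}\|\theta_f\|_{0,\mathcal{T}_h}\lesssim h^{k}\|f\|_{k+1,\Omega}\|\theta_f\|_{0,\mathcal{T}_h}$ under the stated $H^{k+1}$-regularity; to reach $O(h^{k+1})$ one would need $\partial_x f$ to possess $k+1$ derivatives in $v$, i.e.\ mixed regularity of order $k+2$. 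So the decisive step is left open, and no bookkeeping with \eqref{inverseeqn}--\eqref{traceeqn} alone can close it.

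The paper's proof is organized in the opposite order, and that is what makes its quantitative part work. It localizes $\mathcal{K}^1=\sum_{i,j}\mathcal{K}^1_{ij}$ and splits the coefficient rather than the projection error: $v=\bar v+(v-\bar v)$ with $\bar v=\mathcal{P}^0_v(v)$ the cell average, see \eqref{4.35}. The fluctuation part is estimated for the \emph{whole} $\eta_f$ by H\"older, \eqref{wprojection} and \eqref{inverseeqn}--\eqref{traceeqn}: the factor $\|v-\bar v\|_{L^\infty(J_j)}\lesssim h_v$ exactly offsets the $h^{-1}$ lost to the inverse and trace inequalities, giving $h_v\,h^{k}\|f\|_{k+1,\Omega}\|\theta_f\|_{0,\mathcal{T}_h}=h^{k+1}\|f\|_{k+1,\Omega}\|\theta_f\|_{0,\mathcal{T}_h}$ with no orthogonality needed; this compensation mechanism is precisely what your proposal lacks. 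For the cell-average part the paper then asserts $\mathcal{K}^1_{ij}(\bar v,\eta_f,\theta_f)=0$ ``using \eqref{4.21} and \eqref{flux}''. Your $\zeta$-computation is the rigorous content of that assertion for the $x$-projection-error component of $\eta_f$, but it does not cover the $\xi_f$-component, which is exactly the term isolated above; note that the paper dispatches this in a single sentence. A fully detailed proof must therefore either verify a cancellation (or an $O(h^{k+1})$ bound) for $-\bar v\sum_{i}\int_{T_{ij}} E^v(\partial_x f)\,\theta_f\,{\rm d}x\,{\rm d}v$, or strengthen the regularity hypothesis on $f$ so that this term can be estimated directly.
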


\begin{proof}
	Define 
	\begin{equation*}
		\mathcal{K}^1(v,\eta_f,\theta_f) = \sum_{i,j}\mathcal{K}^1_{ij}(v,\eta_f,\theta_f),
	\end{equation*}
where
\begin{equation*}
	K^1_{ij}(v,\eta_f,\theta_f) = \int_{T_{ij}}\eta_fv\,\partial_x\theta_f\,{\rm d}x\,{\rm d}v + \int_{J_j}\left(\reallywidehat{v\eta_f}[\![\theta_f]\!]\right)_{i-1/2,v}\,{\rm d}v.
\end{equation*}
We will estimate $\mathcal{K}^1(v,\eta_f,\theta_f)$ for a single arbitrary element $T_{ij}$ and then we sum over all elements. Let $\bar{v} = \mathcal{P}^0_v(v)$ be the $L^2$-projection onto the piecewise constants of $T_{ij}$ of $v$ then we write
\begin{equation}\label{4.35}
	\mathcal{K}^1_{ij}(v,\eta_f,\theta_f) = \mathcal{K}^1_{ij}(v - \bar{v},\eta_f,\theta_f) + \mathcal{K}^1_{ij}(\bar v,\eta_f,\theta_f).
\end{equation}
The H\"older inequality with the projection estimates and \eqref{inverseeqn}-\eqref{traceeqn} yields
\begin{equation*}
	\begin{aligned}
		\mathcal{K}^1_{ij}(v - \bar{v},\eta_f,\theta_f) &= \|v - \bar{v}\|_{L^{\infty}(J)}\left(\|\eta_f\|_{0,T_{ij}}\|\partial_x\theta_f\|_{0,T_{ij}} + \sum_{m = i+1/2}\|\eta_f\|_{0,J^m}\|\theta_f\|_{0,J^m}\right)
		\\
		& \leq h_vh^k\|f\|_{k+1,T_{ij}}\|\theta_f\|_{0,T_{ij}}.
	\end{aligned}
\end{equation*}
Using \eqref{4.21} and \eqref{flux}, the last term of \eqref{4.35} is zero. 
This completes the proof.
\end{proof}
	

	\begin{lem}\label{K2estimate}
		Let $\mathcal{T}_h$ be a Cartesian mesh of $\Omega$, $k \geq 1$ and let $(u_h,f_h) \in X_h\times \mathcal{Z}_h$ be the solution to \eqref{bh}. Let $(u,f) \in L^\infty([0,T];\,W^{1,\infty}(I)\cap H^{k+1}(I))\times L^\infty([0,T];\,W^{1,\infty}(\Omega)\cap H^{k+1}(\Omega))$ and let $\mathcal{K}^2$ be defined as in \eqref{K2}. Then, the following estimate holds
		\begin{equation}\label{k2estimate}
			\begin{aligned}
				|\mathcal{K}^2(u_h - v,f,\theta_f)| \leq& C\left(h^k\|u_h - u\|_{\infty,I_h} 
				+ h^{k+1}\|u\|_{W^{1,\infty}(I)}\right)\|f\|_{k+1,\Omega} \|\theta_f\|_{0,\mathcal{T}_h}.
			\end{aligned}
		\end{equation}		
	\end{lem}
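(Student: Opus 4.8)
The plan is to reduce everything to the companion estimate of Lemma \ref{K1estimate} by replacing the discrete coefficient $u_h-v$ with the exact coefficient $u-v$, for which the projection $\Pi_{\lambda_1,\lambda_2}$ was tailored, and then to control the error committed in this replacement. Writing $u_h-v=(u-v)+(u_h-u)$, the volume integral in \eqref{K2} splits linearly. For the boundary integral I first use that $u_h$ is single valued on each horizontal edge $I_i\times\{v_{j-1/2}\}$, so that
\[
\reallywidehat{(u_h-v)\eta_f}=(u_h-v)\{\eta_f\}+\Big(\frac{1-2\lambda_2}{2}\Big)|u_h-v|\,[\![\eta_f]\!],
\]
and then split $(u_h-v)=(u-v)+(u_h-u)$ in the average part together with $\big|\,|u_h-v|-|u-v|\,\big|\le|u_h-u|$ in the upwind part. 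In this way $\mathcal{K}^2(u_h-v,f,\theta_f)=\mathcal{K}^2(u-v,f,\theta_f)+R$, where every term in $R$ carries a factor $(u_h-u)$.

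The remainder $R$ is estimated directly. Its volume contribution obeys, by H\"older's inequality, the $v$-analogue of the inverse inequality \eqref{inverseeqn} and the approximation property \eqref{wprojection},
\[
\Big|\sum_{i,j}\int_{T_{ij}}\eta_f(u_h-u)\,\partial_v\theta_f\,{\rm d}v\,{\rm d}x\Big|\le\|u_h-u\|_{\infty,I_h}\|\eta_f\|_{0,\mathcal{T}_h}\|\partial_v\theta_f\|_{0,\mathcal{T}_h}\le Ch^{k}\|u_h-u\|_{\infty,I_h}\|f\|_{k+1,\Omega}\|\theta_f\|_{0,\mathcal{T}_h}.
\]
Its boundary contribution is bounded the same way, now invoking the trace inequality \eqref{traceeqn} and the edge bound $h^{1/2}\|\eta_f\|_{0,\Gamma_h}\le Ch^{k+1}\|f\|_{k+1,\Omega}$ contained in \eqref{wprojection}; one again loses exactly one power of $h$ relative to $\|\eta_f\|_{0,\mathcal{T}_h}$, producing the $h^{k}\|u_h-u\|_{\infty,I_h}$ contribution in \eqref{k2estimate}.

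It remains to treat $\mathcal{K}^2(u-v,f,\theta_f)$, whose coefficient now matches the branch structure of $\Pi_{\lambda_1,\lambda_2}$. Mimicking Lemma \ref{K1estimate}, on each $T_{ij}$ I approximate $u-v$ by the constant $c_{ij}=u(x_i)-\bar v_j$, where $\bar v_j$ is the midpoint of $J_j$, writing $u(x)-v=c_{ij}+(x-x_i)\partial_xu(\xi)+(\bar v_j-v)$ via Taylor's theorem in $x$. The constant-coefficient piece $\mathcal{K}^2(c_{ij},f,\theta_f)$ vanishes by the volume orthogonality and the horizontal-edge flux relations defining $\Pi_{\lambda_1,\lambda_2}$ in \eqref{4.21} together with the flux \eqref{flux}, exactly as $\mathcal{K}^1_{ij}(\bar v,\eta_f,\theta_f)$ vanished before. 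The two remainders are of size $O(h_x\|u\|_{W^{1,\infty}(I)})$ and $O(h_v)$; bounding them by H\"older's inequality, the inverse and trace inequalities and \eqref{wprojection} gives the $h^{k+1}\|u\|_{W^{1,\infty}(I)}\|f\|_{k+1,\Omega}$ contribution, and summation over all elements yields \eqref{k2estimate}.

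The delicate point is the vanishing of the constant-coefficient piece on those elements where $u(x)-v$ changes sign: there $\Pi_{\lambda_2}$ switches between the one-sided projections $Q^v_{\lambda_2}$, $Q^v_{1-\lambda_2}$ and the central $Q^v_1$, so the sign of the surrogate constant $c_{ij}$ must be consistent with the local branch appearing in the numerical flux \eqref{flux}. This consistency is precisely what the index sets $\gamma_2,\Bbbk_2^{\pm}$ and the one-sided boundary conditions built into $\Pi_{\lambda_1,\lambda_2}$ guarantee, and verifying the cancellation on these sign-change elements — rather than on the sign-definite ones, where it is immediate — is the main obstacle.
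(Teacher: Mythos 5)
Your skeleton matches the paper's proof up to a point: the paper uses the very same splitting $u_h-v=(u_h-u)+(u-v)$, and estimates the $(u_h-u)$ part exactly as you do (H\"older, inverse and trace inequalities, projection estimate), obtaining $Ch^{k}\|u_h-u\|_{\infty,I_h}\|f\|_{k+1,\Omega}\|\theta_f\|_{0,\mathcal{T}_h}$. The genuine gap is precisely where you flag it and then wave it away: the claimed vanishing of the frozen-coefficient piece $\mathcal{K}^2_{ij}(c_{ij},f,\theta_f)$ on elements where $u-v$ changes sign. On such elements ($i\in\gamma_2$) the projection $\Pi_{\lambda_1,\lambda_2}$ is built from the \emph{one-sided} edge condition $\int_{I_i}(\eta_f)^-_{x,j+1/2}(z_h)^-_{x,j+1/2}\,{\rm d}x=0$, whereas the flux attached to a nonzero constant $c_{ij}$ is the $\lambda_2$-weighted trace $c_{ij}\bigl(\lambda_2\eta_f^-+(1-\lambda_2)\eta_f^+\bigr)$ (weights exchanged if $c_{ij}<0$). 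For $\lambda_2>\tfrac12$ these boundary conditions do not coincide, so the edge term does not cancel against the volume term; the index sets $\gamma_2,\Bbbk_2^{\pm}$ cannot rescue this, because they exist precisely to record that no single flux branch is consistent on a sign-change element. Asserting that the structure of $\Pi_{\lambda_1,\lambda_2}$ ``guarantees'' the cancellation is the unproved step, and it is false as stated.

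The paper closes this case with no cancellation at all. If $u-v$ vanishes at some $x^*\in I_i$, the mean value theorem gives $\|u-v\|_{L^\infty(I_i)}\le Ch\|u\|_{W^{1,\infty}(I)}$, and then the same crude H\"older/inverse/trace bound already used for the $(u_h-u)$ part yields $|\mathcal{K}^2_{ij}((u-v),f,\theta_f)|\le Ch^{k+1}\|u\|_{W^{1,\infty}(I)}\|f\|_{k+1,T_{ij}}\|\theta_f\|_{0,T_{ij}}$; this is the sole source of the $h^{k+1}\|u\|_{W^{1,\infty}(I)}$ term in \eqref{k2estimate}. (On sign-definite elements the paper invokes \eqref{4.21} together with \eqref{flux} to make the variable-coefficient term vanish outright, so no Taylor expansion or surrogate constant is needed there, unlike in Lemma \ref{K1estimate}.) Your argument is repaired by adopting exactly this dichotomy: seek cancellation only on sign-definite elements, and on sign-change elements estimate directly, noting that there even your surrogate constant is $O(h)$ small. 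Two further mismatches are worth recording: your Taylor remainder $(\bar v_j-v)$ produces a term $Ch^{k+1}\|f\|_{k+1,\Omega}\|\theta_f\|_{0,\mathcal{T}_h}$ carrying no factor $\|u\|_{W^{1,\infty}(I)}$, so as written you prove a slightly weaker inequality than \eqref{k2estimate} (harmless for Lemma \ref{fl2}, but not the stated bound); and since the branch classification is made edge by edge (at fixed $v=v_{j\pm1/2}$), the frozen constant must use the edge value of $v$ rather than the cell midpoint, otherwise its sign need not select the flux branch actually appearing in $\mathcal{K}^2_{ij}$.
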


\begin{proof}
	Let
	\begin{equation*}
		\mathcal{K}^2(u_h - v,f,\theta_f) = \sum_{i,j}\mathcal{K}^2_{ij}(u_h - v,f,\theta_f),
	\end{equation*}
where
\begin{equation*}
	\mathcal{K}^2_{ij}(u_h - v,f,\theta_f) = \int_{T_{ij}}\left(u_h - v \right)\eta_f\,\partial_v\theta_f\,{\rm d}x\,{\rm d}v + \int_{I_i}\left(\reallywidehat{\left(u_h - v\right)\eta_f}[\![\theta_f]\!]\right)_{x,j+1/2}\,{\rm d}x.
\end{equation*}
First, we prove the estimate for a single element $T_{ij}$ then we take sum over all element. By adding and subtracting $\left(u - v\right)$ in $\mathcal{K}^2_{ij}(u_h - v,f,\theta_f)$, we obtain
\begin{equation}\label{c5}
	\mathcal{K}^2_{ij}(u_h - v,f,\theta_f) \leq \mathcal{K}^2_{ij}((u_h - u),f,\theta_f) + \mathcal{K}^2_{ij}((u - v),f,\theta_f).
\end{equation}
Using the H\"older inequality, \eqref{inverseeqn}-\eqref{traceeqn} with the projection estimates from the first term, we obtain
\begin{equation*}\label{c6}
	\begin{aligned}
		\mathcal{K}^2_{ij}(u_h - u),f,\theta_f) &\leq \|u_h - u\|_{\infty,I_i}\left(\|\eta_f\|_{0,T_{ij}}\|\partial_v\theta_f\|_{0,T_{ij}} \right.
		\\
		&\left. + \sum_{m = i+1/2}\|\eta_f\|_{0,I^m}\|\theta_f\|_{0,I^m}\right)
		\\
		&\leq Ch^{k}\|u_h - u\|_{\infty,I_i}\|f\|_{k+1,T_{ij}}\|\theta_f\|_{0,T_{ij}}.
	\end{aligned}
\end{equation*}
Now, to complete the proof we need to estimate the last term of \eqref{c5} for this we make different cases:
\begin{enumerate}
	\item If $\left(u(x) - v\right) \neq 0, \,\, \forall \,\, x \in I_i$.
	\item If $\left(u - v\right)$ vanish inside $T_{ij}$.
\end{enumerate}
In the case $(1)$, without loss of generality assume that $\left(u(x) - v\right) > 0$ for all $x \in I_i$. A use of equation \eqref{4.21} with \eqref{flux} shows:
\begin{equation*}
	\mathcal{K}^2_{ij}\left((u - v),f,\theta_f\right) = 0.
\end{equation*}
In the case $(2)$, the projection operator is $\Pi_{\lambda_1,\lambda_2}$. Then, the H\"older inequality with \eqref{inverseeqn}-\eqref{traceeqn} and projection estimate, yields
\begin{equation*}
	\begin{aligned}
		\mathcal{K}^2_{ij}((u - v),f,\theta_f) &\leq \|u - v\|_{L^\infty(I)}\left(\|\eta_f\|_{0,T_{ij}}\|\partial_v\theta_f\|_{0,T_{ij}} + \sum_{m = j+1/2}\|\eta_f\|_{0,I^m}\|\theta_f\|_{0,I^m}\right)
		\\
		&\leq Ch^k\|u - v\|_{L^\infty(I)}\|f\|_{k+1,T_{ij}}\|\theta_f\|_{0,T_{ij}}.
	\end{aligned}
\end{equation*}
Now, using the fact that, there exist $x^* \in I_i$ such that $\left(u(x^*) - v\right) = 0$ together with the mean value theorem, we obtain
\begin{equation*}
	\|u - v\|_{L^{\infty}(I)} = \max_{x \in I_i}\vert u(x) - u(x^*)\vert \leq C\max_{x \in I_i}\vert x - x^*\vert \,\|u_x\|_{L^\infty(I)} \leq Ch\|u\|_{W^{1,\infty}(I)}.
\end{equation*}
Hence,
\begin{equation*}
	\mathcal{K}^2_{ij}((u - v),f,\theta_f) \leq Ch^{k+1}\|u\|_{W^{1,\infty}(I)}\|f\|_{k+1,T_{ij}}\|\theta_f\|_{0,T_{ij}}.
\end{equation*}
This completes the proof.
\end{proof}


	\begin{lem}
		Let $f(t)$ be the solution of Vlasov equation \eqref{eq:continuous-model}. Let $f_h(t) \in \mathcal{Z}_h$ be the finite approximation. Assume that $f \in L^\infty([0,T]; H^{k+1}(\Omega)\cap W^{1,\infty}(\Omega))$. Then there exist a constant $C > 0$ such that for $t \in (0,T]$
		\begin{equation*}
			\|f(t) - f_h(t)\|_{\infty,\mathcal{T}_h} \leq C\left(h\|f(t)\|_{W^{1,\infty}(\Omega)} + h^{k+\frac{1}{2}}\|f(t)\|_{k+1,\Omega} + h^{-\frac{1}{2}}\|f(t) - f_h(t)\|_{0,\mathcal{T}_h}\right).
		\end{equation*}
	\end{lem}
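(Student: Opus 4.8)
The plan is to mirror, almost verbatim, the argument used for the fluid variable in Lemma \ref{L:uinf}, now carried out on the two--dimensional Cartesian mesh $\mathcal{T}_h$ and with the $L^2$--projection $\mathcal{P}_h$ onto $\mathcal{Z}_h$ (the same operator appearing in Lemma \ref{lem:mass}) playing the role that $\mathcal{P}_x$ played in the one--dimensional case. First I would insert $\mathcal{P}_h f$ and split by the triangle inequality,
\begin{align*}
	\|f - f_h\|_{\infty,\mathcal{T}_h} &\le \|f - \mathcal{P}_h f\|_{\infty,\mathcal{T}_h} + \|\mathcal{P}_h f - f_h\|_{\infty,\mathcal{T}_h},
\end{align*}
so that the projection error and the discrete (finite element) error are treated separately.

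For the first term I would invoke the standard $L^\infty$ approximation property of the $L^2$--projection on a quasi--uniform mesh (the same estimate cited via \cite{brenner2007mathematical} in Lemma \ref{L:uinf}), giving $\|f - \mathcal{P}_h f\|_{\infty,\mathcal{T}_h} \lesssim h\|f\|_{W^{1,\infty}(\Omega)}$; this uses $f \in W^{1,\infty}(\Omega)$ and accounts for the first term on the right of the claimed bound. For the second term, since $\mathcal{P}_h f - f_h \in \mathcal{Z}_h$ is a discrete function, I would pass from $L^\infty$ to $L^2$ by the (tensor--product analogue of the) norm--comparison inverse inequality \eqref{normcompeqn}, i.e. $\|\mathcal{P}_h f - f_h\|_{\infty,\mathcal{T}_h} \lesssim h^{-1/2}\|\mathcal{P}_h f - f_h\|_{0,\mathcal{T}_h}$, then apply the triangle inequality once more, $\|\mathcal{P}_h f - f_h\|_{0,\mathcal{T}_h} \le \|f - \mathcal{P}_h f\|_{0,\mathcal{T}_h} + \|f - f_h\|_{0,\mathcal{T}_h}$, and finally control $\|f - \mathcal{P}_h f\|_{0,\mathcal{T}_h} \lesssim h^{k+1}\|f\|_{k+1,\Omega}$ by the optimal $L^2$ projection estimate (in the spirit of \eqref{wprojection}). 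Collecting the pieces yields the three terms $h\|f\|_{W^{1,\infty}(\Omega)}$, $h^{-1/2}\cdot h^{k+1}\|f\|_{k+1,\Omega}=h^{k+1/2}\|f\|_{k+1,\Omega}$, and $h^{-1/2}\|f - f_h\|_{0,\mathcal{T}_h}$, exactly matching the statement.

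The routine portions are the two projection estimates and the triangle inequalities; the only genuinely load--bearing step is the inverse inequality, and that is where I expect the main subtlety to lie. The hard point is not the idea but the bookkeeping of the $h$--power: on a genuinely two--dimensional element $T_{ij}=I_i\times J_j$ a naive $L^\infty$--versus--$L^2$ inverse estimate for $\mathbb{Q}^k$ scales like $h^{-1}$ rather than $h^{-1/2}$, so to land on the stated exponent one must use the quasi--uniformity hypothesis on the Cartesian mesh together with the anisotropic (one--directional) form of \eqref{normcompeqn}, exactly as the regularity and quasi--uniformity assumptions were exploited in Lemma \ref{L:uinf}. I would therefore state the inverse inequality in its tensor--product form at the outset and verify that the mesh regularity constants $c_1,c_2,c_1',c_2'$ keep the passage from $h_x,h_v$ to $h$ uniform, which is the one place where care is required; the rest follows by assembling the bounds.
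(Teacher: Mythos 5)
Your proposal follows exactly the architecture of the paper's own proof: the paper disposes of this lemma in one line, saying it is Lemma \ref{L:uinf} with $u = f$, $u_h = f_h$ and the projection replaced by the two-dimensional one (the paper uses $\Pi_{\lambda_1,\lambda_2}$ where you use $\mathcal{P}_h$; since only the $L^\infty$- and $L^2$-approximation properties of the projection enter, this difference is immaterial, and your choice is arguably cleaner because the paper never states an $L^\infty$ bound for $\Pi_{\lambda_1,\lambda_2}$). The genuine problem is precisely the step you flagged as load-bearing, and the repair you propose for it does not exist. On a two-dimensional cell $T_{ij} = I_i \times J_j$ the sharp $L^\infty$--$L^2$ inverse estimate for $\mathbb{Q}^k$ is
\begin{equation*}
	\|w_h\|_{L^\infty(T_{ij})} \leq C\left(h_i^x h_j^v\right)^{-1/2}\|w_h\|_{L^2(T_{ij})},
\end{equation*}
and neither quasi-uniformity nor a ``one-directional'' use of \eqref{normcompeqn} improves the exponent: applying \eqref{normcompeqn} in $x$ for each fixed $v$ and then in $v$ simply reproduces the factor $h_x^{-1/2}h_v^{-1/2}$, while quasi-uniformity gives $h_i^x h_j^v \gtrsim h_x h_v$ and, since $h_x,h_v \leq h = \max(h_x,h_v)$, one has $\left(h_x h_v\right)^{-1/2} \geq h^{-1}$. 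The scaling is already attained by constants: take $w_h \equiv 1$ on a single cell and $w_h \equiv 0$ elsewhere (admissible in $\mathcal{Z}_h$); then $\|w_h\|_{\infty,\mathcal{T}_h} = \left(h_i^x h_j^v\right)^{-1/2}\|w_h\|_{0,\mathcal{T}_h}$ exactly, so the global inequality $\|w_h\|_{\infty,\mathcal{T}_h} \lesssim h^{-1/2}\|w_h\|_{0,\mathcal{T}_h}$ for $w_h \in \mathcal{Z}_h$ is false in two dimensions, and no bookkeeping with the constants $c_1,c_2,c_1',c_2'$ can rescue it.

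What your argument (and, run honestly, the paper's own reduction to Lemma \ref{L:uinf}) actually yields is the weaker bound
\begin{equation*}
	\|f - f_h\|_{\infty,\mathcal{T}_h} \leq C\left(h\|f\|_{W^{1,\infty}(\Omega)} + h^{k}\|f\|_{k+1,\Omega} + h^{-1}\|f - f_h\|_{0,\mathcal{T}_h}\right),
\end{equation*}
with $h^{-1}$ and $h^{k}$ in place of $h^{-1/2}$ and $h^{k+1/2}$. To your credit, you are the only one who noticed the discrepancy: the paper's proof silently imports the one-dimensional exponents of Lemma \ref{L:uinf} into a two-dimensional setting, so the lemma as stated carries the same gap. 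But noticing the obstruction is not the same as removing it, and as written your proof establishes only the $h^{-1}$ version. This is of limited consequence for the paper as a whole, since this lemma does not appear to be invoked in the subsequent error analysis (the $L^\infty$ control used in Lemma \ref{fl2} and Theorem \ref{mmma1} is that of $u - u_h$ from Lemma \ref{L:uinf}, which is genuinely one-dimensional and correct), but any future use of the present lemma should rely on the corrected exponents.
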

	
	\begin{proof}
		The proof similar to Lemma \ref{L:uinf} just taking $u = f, u_h = f_h$ and $Q_\lambda u = \Pi_{\lambda_1,\lambda_2}f$. This completes the proof.
	\end{proof}

	
	\begin{lem}\label{fl2}
		Let $k \geq 1$ and let $f \in \mathcal{C}^1([0,T];H^{k+1}(\Omega)\cap W^{1,\infty}(\Omega))$ be the solution of the Vlasov - viscous Burgers' problem \eqref{eq:continuous-model}-\eqref{contburger} and let $u \in \mathcal{C}^0([0,T];H^{k+1}(I)\cap W^{1,\infty}(I))$ be the associated fluid velocity. Let $(u_h,f_h) \in X_h \times \mathcal{Z}_h $ be the approximation of $(u,f)$, respectively, then
		\begin{equation}\label{fL2}
			\frac{{\rm d}}{{\rm d}t}\|\theta_f\|_{0,\mathcal{T}_h}^2 \leq Ch^{2k+2} + C\|\theta_f\|_{0,\mathcal{T}_h}^2 + C\|u - u_h\|_{0,I_h}^2,
		\end{equation}
		where $C$ depends on the polynomial degree $k$, the shape regularity of the partition and it is also depends on $(u,f)$. 
	\end{lem}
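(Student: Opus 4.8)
The plan is to start from the energy identity \eqref{err1} for $\theta_f$ and to bound each term on its right-hand side, folding the resulting contributions into the three terms appearing in \eqref{fL2}. Since $\lambda_1,\lambda_2>1/2$, the two jump integrals on the left-hand side of \eqref{err1} are non-negative and may simply be dropped, which leaves
\[
\frac{1}{2}\frac{{\rm d}}{{\rm d}t}\|\theta_f\|^2_{0,\mathcal{T}_h} \le \left((\eta_f)_t,\theta_f\right) - \mathcal{K}^1(v,\eta_f,\theta_f) - \sum_{i,j}\int_{T_{ij}}(u-u_h)\,\partial_v f\,\theta_f\,{\rm d}v\,{\rm d}x + \frac{1}{2}\|\theta_f\|^2_{0,\mathcal{T}_h} - \mathcal{K}^2(u_h-v,f,\theta_f).
\]
I would then estimate these five terms one by one.

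First, by the Cauchy--Schwarz and Young inequalities, $((\eta_f)_t,\theta_f)\le \tfrac12\|(\eta_f)_t\|^2_{0,\mathcal{T}_h}+\tfrac12\|\theta_f\|^2_{0,\mathcal{T}_h}$, and the approximation property \eqref{wprojection} applied to $f_t$ (legitimate for a.e.\ $t$ under the assumed $\mathcal{C}^1$-in-time regularity, since the sign structure of $u(x)-v$ is locally constant in time away from finitely many instants) bounds $\|(\eta_f)_t\|^2_{0,\mathcal{T}_h}$ by $Ch^{2k+2}$. The term $\mathcal{K}^1$ is controlled directly by Lemma \ref{K1estimate}, and a further use of Young's inequality turns $Ch^{k+1}\|f\|_{k+1,\Omega}\|\theta_f\|_{0,\mathcal{T}_h}$ into $Ch^{2k+2}+C\|\theta_f\|^2_{0,\mathcal{T}_h}$. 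The remaining volume term is handled by bounding $|\partial_v f|$ via $f\in W^{1,\infty}(\Omega)$, after which Cauchy--Schwarz together with the fact that $u-u_h$ is independent of $v$ (so that $\|u-u_h\|^2_{L^2(\Omega)}=2M\|u-u_h\|^2_{0,I_h}$) yields a bound of the form $C\|u-u_h\|^2_{0,I_h}+C\|\theta_f\|^2_{0,\mathcal{T}_h}$. The lone $\tfrac12\|\theta_f\|^2_{0,\mathcal{T}_h}$ contributes directly to the $C\|\theta_f\|^2$ term.

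The delicate term is $\mathcal{K}^2$. Lemma \ref{K2estimate} supplies
\[
|\mathcal{K}^2(u_h-v,f,\theta_f)| \le C\left(h^k\|u-u_h\|_{\infty,I_h}+h^{k+1}\|u\|_{W^{1,\infty}(I)}\right)\|f\|_{k+1,\Omega}\|\theta_f\|_{0,\mathcal{T}_h},
\]
which involves the $L^\infty$ norm $\|u-u_h\|_{\infty,I_h}$ rather than the $L^2$ norm appearing in the target \eqref{fL2}. The crucial move is therefore to peel off $\|\theta_f\|_{0,\mathcal{T}_h}$ by Young's inequality and then replace $\|u-u_h\|_{\infty,I_h}$ by the inverse-type estimate of Lemma \ref{L:uinf},
\[
\|u-u_h\|_{\infty,I_h}\lesssim h\|u\|_{W^{1,\infty}(I)}+h^{k+\frac12}\|u\|_{k+1,I_h}+h^{-\frac12}\|u-u_h\|_{0,I_h}.
\]
Squaring and multiplying by $h^{2k}$ produces the three pieces $h^{2k+2}$, $h^{4k+1}$ and $h^{2k-1}\|u-u_h\|^2_{0,I_h}$; here the hypothesis $k\ge 1$ is exactly what guarantees $h^{4k+1}\le h^{2k+2}$ and, more importantly, keeps $h^{2k-1}$ bounded, so that the last piece is absorbed into $C\|u-u_h\|^2_{0,I_h}$. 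This is the main obstacle of the proof: converting the $L^\infty$-control of the fluid error, forced upon us by the nonlinear drag coupling inside $\mathcal{K}^2$, into the $L^2$-control that the downstream Gr\"onwall argument can accommodate.

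Collecting the five estimates, summing over the elements, and finally multiplying through by $2$ gives \eqref{fL2}, with $C$ depending on the polynomial degree $k$, on the mesh regularity, and on the solution through $\|f\|_{k+1,\Omega}$, $\|f_t\|_{k+1,\Omega}$ and $\|u\|_{W^{1,\infty}(I)}$, as asserted.
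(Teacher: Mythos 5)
Your proposal is correct and follows essentially the same route as the paper: both start from the energy identity \eqref{err1}, drop the non-negative jump terms, bound $((\eta_f)_t,\theta_f)$, $\mathcal{K}^1$ and the drag volume term by the projection estimate \eqref{wprojection}, Lemma \ref{K1estimate} and H\"older/Young, and handle $\mathcal{K}^2$ by combining Lemma \ref{K2estimate} with the inverse-type bound of Lemma \ref{L:uinf}, using $k\ge 1$ to keep the factor $h^{2k-1}$ bounded so that the fluid error enters only through $\|u-u_h\|^2_{0,I_h}$. Your explicit remark on why the approximation property may be applied to $(\eta_f)_t$ is a point the paper leaves implicit, but it does not change the argument.
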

	
	\begin{proof}
		From equation \eqref{err1} with triangle inequality, we have
		\begin{equation}\label{estimate}
			\begin{aligned}
				&\frac{1}{2}\frac{{\rm d}}{{\rm d}t}\|\theta_f\|^2_{0,\mathcal{T}_h} + \sum_{i,j}\int_{J_j}\left(\frac{2\lambda_1 - 1}{2}\right)\vert v\vert[\![\theta_f]\!]^2_{i-1/2,v}\,{\rm d}v  
				\\
				&+ \sum_{i,j}\int_{I_i}\left(\frac{2\lambda_2 - 1}{2}\right)\vert u_h - v\vert[\![\theta_f]\!]^2_{x,j-1/2}\,{\rm d}x \leq \left((\eta_f)_t,\theta_f\right) + \frac{1}{2}\|\theta_f\|^2_{0,\mathcal{T}_h} + |\mathcal{K}^1| + |\mathcal{K}^2| + |T_1|.
			\end{aligned}
		\end{equation}
		Here,
		\[
		T_1 = \sum_{i,j}\int_{T_{ij}}(u-u_h)\,\partial_vf\,\theta_f\,{\rm d}v\,{\rm d}x.
		\]
		First estimate $T_1$, 
		\begin{equation}\label{T1estimate}
			\begin{aligned}
				|T_1| & = \left\vert\sum_{i,j}\int_{T_{ij}}(u-u_h)\,\partial_v f\, \theta_f\,{\rm d}v\,{\rm d}x\right\vert \leq \|u - u_h\|_{0,I_h}\|\partial_v f\|_{L^\infty(\Omega)}\|\theta_f\|_{0,\mathcal{T}_h}
				\\
				&\leq \|u - u_h\|^2_{0,I_h}\|\partial_v f\|_{L^\infty(\Omega)} + \|\partial_v f\|_{L^\infty(\Omega)}\|\theta_f\|^2_{0,\mathcal{T}_h},
			\end{aligned}
		\end{equation}
		here, in second step use the H\"older inequality, in third step the Young's inequality.\\
		Now, $\mathcal{K}^1$ is estimated from Lemma \ref{K1estimate} and the arithmetic-geometric inequality,
		\begin{equation}\label{K1Estimate}
			|\mathcal{K}^1| \leq Ch^{2k+2}\|f\|_{k+1,\Omega}^2 + C\|\theta_f\|^2_{0,\mathcal{T}_h}.
		\end{equation}
		To deal with $\mathcal{K}^2$, we observe that the bound \eqref{k2estimate} in Lemma \ref{K2estimate} yields
		\begin{equation}\label{K2Estimate}
			\begin{aligned}
				|\mathcal{K}^2| &\leq C\left(h^k\|u - u_h\|_{\infty,I_h} 
				 + Ch^{k+1}\|u\|_{W^{1,\infty}(I)}\right) \|f\|_{k+1,\Omega}\|\theta_f\|_{0,\mathcal{T}_h}
				\\
				&\leq Ch^k\left(h\|u\|_{W^{1,\infty}(I)} + h^{k+\frac{1}{2}}\|u\|_{k+1,I} + h^{-\frac{1}{2}}\|u - u_h\|_{0,I_h}\right)\|f\|_{k+1,\Omega}\|\theta_f\|_{0,\mathcal{T}_h} 
				\\
				&\quad + Ch^{k+1}\|u\|_{W^{1,\infty}(I)}\|f\|_{k+1,\Omega} \|\theta_f\|_{0,\mathcal{T}_h}
				\\
				&\leq C_{u,f}h^{2k+2} + C\|u - u_h\|_{0,I_h}^2 + C\|\theta_f\|^2_{0,\mathcal{T}_h}. 
			\end{aligned}
		\end{equation}
		here, in second step we use Lemma \ref{L:uinf}, in third step use the Young's inequality.
		
		Now, substituting the estimates \eqref{T1estimate}-\eqref{K2Estimate} into \eqref{estimate} and using the fact that the last two terms on left hand side are non-negative, the H\"older inequality, projection estimate \eqref{wprojection} with the Young's inequality, we obtain
		\begin{equation*}
			\begin{aligned}
				\frac{{\rm d}}{{\rm d}t}\|\theta_f\|^2_{0,\mathcal{T}_h} \leq Ch^{2k+2} + C\|\theta_f\|^2_{0,\mathcal{T}_h} + C\|u - u_h\|_{0,I_h}^2.
			\end{aligned}
		\end{equation*}
		where $C$ is now independent of $h$ and $f_h$, and depends on $t$ and on the solution $(u,f)$ through its norm. This completes the proof.	
	\end{proof}

	\begin{thm}\label{mmma1}
		Let $k \geq 1$ and let $(u,f) \in \mathcal{C}^1(0,T;H^{k+1}(I)\cap W^{1,\infty}(I)) \times \mathcal{C}^1(0,T;H^{k+1}(\Omega)\cap W^{1,\infty}(\Omega))$ be the solution of the Vlasov - viscous Burgers' system. Let $(f_h,u_h,w_h) \in \mathcal{C}^1(0,T;\mathcal{Z}_h) \times \mathcal{C}^1(0,T;X_h) \times \mathcal{C}^0(0,T;X_h)$ be the DG-LDG approximation of \eqref{bh},\eqref{ch}-\eqref{ch1}, then for $\lambda = 1/2$ with $k$ even and $N_x$ odd or $\lambda > \frac{1}{2}$, there exist a positive constant $C$ independent of $h$ such that for all $t \in (0,T]$
		\begin{equation*}
			\|u - u_h\|_{L^\infty(0,T;L^2(I))} + \|w - w_h\|_{L^2([0,T]\times I)} + \|f - f_h\|_{L^\infty(0,T;L^2(\Omega))} \leq C\tilde{C}h^{k+1},
		\end{equation*}
	where, $\tilde{C} = 2$ for $\lambda = 1/2$ and $\tilde{C} = \left(1 + \epsilon^{-1}\right)^\frac{1}{2}\left(1 + \epsilon^{-1}h^{2k+1}\right)^\frac{1}{2}$ for $\lambda > 1/2$.
	\end{thm}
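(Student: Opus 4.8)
The plan is to fuse the two energy-type inequalities already in hand---Corollary~\ref{cor:uL2} for the Burgers' pair $(\theta_u,\theta_w)$ and Lemma~\ref{fl2} for the Vlasov error $\theta_f$---into one differential inequality for the combined quantity $\Theta(t) := \|\theta_u(t)\|^2_{0,I_h} + \|\theta_f(t)\|^2_{0,\mathcal{T}_h}$, and then to close it by a nonlinear Gr\"onwall argument realised through a continuation (bootstrap) in $t$. I would first fix the discrete data as the projections $u_h(0)=Q^x_\lambda u_0$ and $f_h(0)=\Pi_{\lambda_1,\lambda_2}f_0$, so that $\theta_u(0)=\theta_f(0)=0$ and no initial error enters. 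The two inequalities are coupled only through the cross terms $\|f-f_h\|_{0,\mathcal{T}_h}$ in Corollary~\ref{cor:uL2} and $\|u-u_h\|_{0,I_h}$ in Lemma~\ref{fl2}; I would decouple them with the triangle inequality and the approximation estimates \eqref{uprojection} and \eqref{wprojection}, namely $\|f-f_h\|^2_{0,\mathcal{T}_h}\le 2\|\theta_f\|^2_{0,\mathcal{T}_h}+Ch^{2k+2}$ and $\|u-u_h\|^2_{0,I_h}\le 2\|\theta_u\|^2_{0,I_h}+Ch^{2k+2}$.

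Adding the inequalities after this substitution, and using $\|\theta_u\|^3_{0,I_h}\le\Theta^{3/2}$ and $\|\theta_u\|^4_{0,I_h}\le\Theta^2$ to recast the cubic and quartic terms of \eqref{4.11}, produces a closed inequality of the schematic form
\begin{equation*}
	\frac{{\rm d}}{{\rm d}t}\Theta + \|\theta_w\|^2_{0,I_h} \le C\tilde{C}^2 h^{2k+2} + C_1\,\Theta + Ch^{-\frac{3}{2}}\Theta^{\frac{3}{2}} + Ch^{-1}\Theta^2,
\end{equation*}
valid for $t\in(0,T]$, where, tracking the viscosity dependence through \eqref{uL2}--\eqref{uL21}, one has $C_1=C$ in the case $\lambda=1/2$ (with $k$ even and $N_x$ odd) and $C_1=C(1+\epsilon^{-1}h^{2k+1})$ in the case $\lambda>1/2$. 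The decisive structural point, inherited from Lemma~\ref{conj} via Corollary~\ref{cor:uL2}, is that $\epsilon^{-1}$ never multiplies $\Theta$ without the companion factor $h^{2k+1}$; this is exactly what keeps the eventual Gr\"onwall factor from degenerating like $e^{c/\epsilon}$ and leaves only the polynomial dependence recorded in $\tilde C$. The dissipation term $\|\theta_w\|^2_{0,I_h}$ is kept on the left for later use.

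The main obstacle is the superlinear pair $h^{-3/2}\Theta^{3/2}$ and $h^{-1}\Theta^2$, which forbids a direct linear Gr\"onwall estimate. I would dispose of it by continuation: let $t^\ast$ be the largest time with $\Theta(t)\le h^{2k+1}$ on $[0,t^\ast]$, which is positive since $\Theta(0)=0$. On this interval $h^{-3/2}\Theta^{3/2}\le h^{k-1}\Theta$ and $h^{-1}\Theta^2\le h^{2k}\Theta$, and for $k\ge1$ both coefficients are bounded (vanishing as $h\to0$), so the two terms are absorbed into $C_1\Theta$ at the cost of enlarging $C_1$ by an $O(1)$ amount. The inequality then linearises, and the linear Gr\"onwall lemma gives $\Theta(t)\le C\tilde{C}^2 h^{2k+2}\,e^{C_1 T}$ on $[0,t^\ast]$; since $\epsilon^{-1}h^{2k+1}$ is bounded for fixed $\epsilon$, the factor $e^{C_1 T}$ stays bounded and $\epsilon^{-1}$ appears only through $\tilde C$. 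For $h$ small enough this bound lies strictly below the threshold $h^{2k+1}$ (it carries an extra power of $h$), contradicting the maximality of $t^\ast$ unless $t^\ast=T$; hence $\Theta(t)\le C\tilde{C}^2 h^{2k+2}$ throughout $[0,T]$.

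It then remains to read off the stated norms. Taking square roots yields $\|\theta_u\|_{L^\infty(0,T;L^2(I))}+\|\theta_f\|_{L^\infty(0,T;L^2(\Omega))}\lesssim \tilde C h^{k+1}$, and the triangle inequality with \eqref{uprojection} and \eqref{wprojection} upgrades these to the claimed bounds on $\|u-u_h\|_{L^\infty(0,T;L^2(I))}$ and $\|f-f_h\|_{L^\infty(0,T;L^2(\Omega))}$. Integrating the retained inequality over $[0,T]$ bounds $\int_0^T\|\theta_w\|^2_{0,I_h}\,{\rm d}t$ by $C\tilde C^2 h^{2k+2}$, after which one more triangle inequality with the projection estimate for $\eta_w$ gives $\|w-w_h\|_{L^2([0,T]\times I)}\lesssim \tilde C h^{k+1}$. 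Summing the three contributions produces the assertion, with $\tilde C=2$ for $\lambda=1/2$ and $\tilde C=(1+\epsilon^{-1})^{1/2}(1+\epsilon^{-1}h^{2k+1})^{1/2}$ for $\lambda>1/2$; the latter value emerges once the coefficient $C\tilde C^2$ above is expanded as the forcing factor $(1+\epsilon^{-1})$ from \eqref{uL21} combined with the Gr\"onwall factor governed by $(1+\epsilon^{-1}h^{2k+1})$.
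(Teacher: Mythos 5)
Your proposal is correct, and it reaches the stated estimate by a genuinely different route at the decisive step. Up to the combined differential inequality you and the paper coincide: the paper likewise adds Corollary \ref{cor:uL2} and Lemma \ref{fl2} to get its inequality \eqref{etauf}, after first merging the cubic and quartic terms into $h^{-3}\|\theta_u\|^4_{0,I_h}$ by Young's inequality. The divergence is in how the superlinear terms are closed. The paper introduces the auxiliary monotone function $\Phi$ of \eqref{Phi}, observes $\partial_t\Phi \le C\bigl(B\Phi + h^{-3}\Phi^2\bigr)$ (with $A$, $B$ as in \eqref{etauf}), and integrates this Bernoulli/Riccati-type inequality exactly, arriving at $\Phi(t)\bigl(B - Ah^{2k-1}(e^{CBT}-1)\bigr) \le ABe^{CBT}h^{2k+2}$ and then imposing the smallness condition $B - h^{2k-1}(e^{CBT}-1) > 0$ on $h$; this is the paper's ``nonlinear Gr\"onwall'' step. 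You instead run a continuation (bootstrap) argument with threshold $h^{2k+1}$: inside the bootstrap window the nonlinear terms are absorbed linearly, via $h^{-3/2}\Theta^{3/2}\le h^{k-1}\Theta$ and $h^{-1}\Theta^{2}\le h^{2k}\Theta$ for $k\ge 1$, the linear Gr\"onwall lemma gives an $O(\tilde{C}^2h^{2k+2})$ bound, and the extra power of $h$ beats the threshold for $h$ small, forcing the window to be all of $[0,T]$. The two mechanisms are equivalent in substance: both require an $h$-smallness condition of the same character (yours through $C\tilde{C}^2e^{C_1T}h<1$, the paper's through the positivity of $B - h^{2k-1}(e^{CBT}-1)$), both keep $\epsilon^{-1}$ out of the exponential precisely because $\epsilon^{-1}$ only ever multiplies $h^{2k+1}$ in the coefficient of $\Theta$ (respectively $\Phi$), and both produce the same $\tilde{C}$. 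Your version is more elementary, invoking only the linear Gr\"onwall inequality plus continuity of $\Theta$ in time, whereas the paper must justify the positivity and monotonicity of $\Phi$ and integrate the nonlinear ODE inequality in closed form. One small point of bookkeeping: the theorem as stated does not pin down $u_h(0)$, $f_h(0)$, and the paper's proof implicitly needs the initial error to be $O(h^{k+1})$ (its choice $\Phi(0)=Ah^{2k+2}$ plays exactly that role); your explicit choice of generalized Gauss--Radau projection initial data, giving $\Theta(0)=0$, is a legitimate and in fact cleaner instantiation of that same implicit assumption, and any initial data with $\Theta(0)\le Ch^{2k+2}$ would serve equally well in your bootstrap.
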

	
	\begin{proof}
		A use of equations \eqref{p1}, \eqref{p} and triangle inequality implies
		\begin{equation*}
			\begin{aligned}
				\|u - u_h\|_{0,I_h} + \|w - w_h\|_{0,I_h} + \|f - f_h\|_{0,\mathcal{T}_h} &\leq \|\eta_u\|_{0,I_h} + \|\theta_u\|_{0,I_h} + \|\eta_w\|_{0,I_h}
				\\
				& + \|\theta_w\|_{0,I_h} + \|\eta_f\|_{0,\mathcal{T}_h} + \|\theta_f\|_{0,\mathcal{T}_h}.
			\end{aligned}
		\end{equation*}
		From equations \eqref{uprojection} and \eqref{wprojection}, we know the estimate of $\|\eta_u\|_{0,I_h}, \|\eta_w\|_{0,I_h}$ and $\|\eta_f\|_{0,\mathcal{T}_h}$, respectively, so it is enough to estimate $\|\theta_u\|_{0,I_h} + \|\theta_w\|_{0,I_h} + \|\theta_f\|_{0,\mathcal{T}_h}$.\\
		Adding \eqref{uL2}-\eqref{uL21} and \eqref{fL2} with the Young's inequality, we obtain
		\begin{equation}\label{etauf}
			\begin{aligned}
				\frac{{\rm d}}{{\rm d}t}\left(\|\theta_u\|^2_{0,I_h} + \|\theta_f\|^2_{0,\mathcal{T}_h}\right) + \|\theta_w\|^2_{0,I_h} &\leq C\left(Ah^{2k+2} + B\|\theta_u\|^2_{0,I_h} + h^{-\frac{3}{2}}\|\theta_u\|^3_{0,I_h}  \right.
				\\
				&\left. \quad +\,\, h^{-1}\|\theta_u\|^4_{0,I_h} + \|\theta_f\|^2_{0,\mathcal{T}_h}\right)
				\\
				& \leq C\left(Ah^{2k+2} + B\|\theta_u\|^2_{0,I_h}   +\,\, h^{-3}\|\theta_u\|^4_{0,I_h} + \|\theta_f\|^2_{0,\mathcal{T}_h}\right) 
			\end{aligned}
		\end{equation} 
	here $A = 1, B = 1$ for $\lambda = 1/2$ and $A = 1 + \epsilon^{-1}, B = 1 + \epsilon^{-1}h^{2k+1}$ for $\lambda > 1/2$.
		Setting
		\begin{equation}\label{vertiii}
			\vertiii{\left(\theta_f,\theta_u,\theta_w\right)}^2 := \|\theta_f\|^2_{0,\mathcal{T}_h} + \|\theta_u\|_{0,I_h}^2 + \int_{0}^{t}\|\theta_w\|^2_{0,I_h}\,{\rm d}s
		\end{equation}
		and a function $\Phi$ as
		\begin{equation}\label{Phi}
			\Phi(t) = Ah^{2k+2} + \int_{0}^{t}\left(B\vertiii{\left(\theta_f,\theta_u,\theta_w\right)}^2 + h^{-3}\vertiii{\left(\theta_f,\theta_u,\theta_w\right)}^4\right)\,{\rm d}s.
		\end{equation}
		An integration of equation \eqref{etauf} with respect to $t$ from $0$ to $t$ and a use of \eqref{vertiii}-\eqref{Phi} shows
		\begin{equation*}
			\vertiii{\left(\theta_f,\theta_u,\theta_w\right)}^2 \leq C\Phi(t).
		\end{equation*}
		Without loss of generality assume that $\vertiii{\left(\theta_f,\theta_u,\theta_w\right)} > 0$, otherwise, we add an arbitrary small quantity say $\delta$ and proceed as in a similar way as describe below and then pass the limit as $\delta \rightarrow 0$. Note that $0 < \Phi(0) \leq \Phi$ and $\Phi$ is differentiable.
		An differentiation of $\Phi$ with respect to $t$, implies
		\begin{equation*}\label{44}
			\begin{aligned}
				\partial_t\Phi(t) &= B\vertiii{\left(\theta_f,\theta_u,\theta_w\right)}^2 + h^{-3}\vertiii{\left(\theta_f,\theta_u,\theta_w\right)}^4
				\\
				&\leq C\left(B\Phi(t) + h^{-3}\left(\Phi(t)\right)^2\right).
			\end{aligned}
		\end{equation*}
Moreover, $\partial_t\Phi(t) > 0$ and hence, $\Phi(t)$ is strictly monotonically increasing function which is also positive. An integration in time $t$ yields
\begin{equation*}\label{Phit}
	\int_{0}^{t}\frac{\partial_s\Phi(s)}{\Phi(s)\left(B + h^{-3}\Phi(s)\right)}\,{\rm d}s \leq \int_{0}^{t}C\,{\rm d}s \leq CT.
\end{equation*}
After evaluating integration on the left hand side exactly by using $\Phi(0) = Ah^{2k+2}$ and taking exponential both side, we obtain
\begin{equation*}
	\Phi(t)\left(B - Ah^{2k-1}(e^{CBT}-1)\right) \leq ABe^{CBT}h^{2k+2}.
\end{equation*}
Now, choose small $h > 0$ such that $\left(B - h^{2k-1}(e^{CBT}-1)\right) > 0$ this gives
\[
\Phi(t) \leq \tilde{C}\,e^{CBT}h^{2k+2},
\]
where, $\tilde{C} = \left(1 + \epsilon^{-1}\right)\left(1 + \epsilon^{-1}h^{2k+1}\right)$. This completes the proof for all $t \in (0,T]$.
	\end{proof}

\begin{rem}
	By Lemma \ref{fhbd} and Theorem \ref{mmma1} along with equation \eqref{3}, we have
	\[
	f_h \in L^\infty(0,T;L^2(\mathcal{T}_h)), \quad u_h \in L^\infty(0,T;L^2(I_h)) \quad \mbox{and} \quad w_h \in L^2([0,T]\times I_h).
	\]
	Using these bounds, we can improved the earlier local-in-time existence result for the discrete problem to global-in-time existence result by extending the interval of existence.
\end{rem}


	\section{Numerical simulation}\label{computation}
	
	In this section, we give some results from our numerical simulations of the Vlasov - viscous Burgers' system:
	
	\begin{equation}\label{87}
		\begin{aligned}
			\partial_tf + v\,\partial_xf + \partial_v\left(\left(u - v\right)f\right) = F(t,x,v), x \in [0,L] = I, v \in [-M,M] = J, t > 0
		\end{aligned}
	\end{equation}
	\begin{equation}\label{88}
		\begin{aligned}
			\partial_tu + u\,\partial_x u - \epsilon \partial^2_x u = \rho V - \rho u + G(x,t), x \in [0,L], t > 0,
		\end{aligned}
	\end{equation}
	with the same boundary and initial conditions as in \eqref{eq:continuous-model}-\eqref{contburger}. Our proposed scheme reduces the problem \eqref{87}-\eqref{88} into a system of ODEs:
	\begin{equation}\label{89}
		\frac{{\rm d}}{{\rm d}t}\vec{\alpha}(t) = \mathcal{L}(\vec{\alpha},t)
	\end{equation}
	and
	\begin{equation}\label{90}
		\frac{{\rm d}}{{\rm d}t}\vec{\beta}(t) = \mathcal{M}(\vec{\beta},t),
	\end{equation}
	where, $\vec{\alpha}(t)$ and $\vec{\beta}(t)$ are the coefficient vectors of $f_h$ and $u_h$, respectively. To further approximate the solutions of the systems \eqref{89}-\eqref{90}, we use the third order TVD RK scheme \cite{gottlieb1998total}. 
	
	In the figures, we use the notation $k_x, k_v$ for the degree of polynomials in $x$ and $v$-variables, respectively. $N_x$ and $N_v$ denote the number of elements taken in $x$ and $v$-domains, respectively. L2f and L2u are errors of $f(t,x,v)$ and $u(t,x)$ in $L^2(\Omega)$ and $L^2(I)$-norms, respectively at the final time $t = T$.
	
	We test the order of convergence of the proposed scheme on the system \eqref{87}-\eqref{88} in two different scenarios (see Example \ref{exm1} and Example \ref{exm3} below). We have run the simulations for both these examples with $I = [0,2\pi], J = [-1,1], T = 0.1$. 
	\begin{exm}\label{exm1}
		Take  
		\[
		F(t,x,v) = \left( v - 1 \right)\cos(x-t)e^{-v^2} - \left( 2v\left(\sin(x-t) - v \right) + 1 \right)\left( 1 + \sin(x-t)\right)e^{-v^2}
		\]
		\[
		G(t,x) = \left(\sin(x - t) - 1 \right)\cos(x - t) + \epsilon\,\sin(x - t) + \left(1.493648265624854\right)\left(1 + \sin(x-t)\right) \sin(x-t)
		\]
		and 
		\[
		f(0,x,v) = \left( 1 + \sin(x) \right)e^{ - v^2}, \quad u(0,x) = \sin(x).
		\]
		The exact solution of the problem is given by
		\begin{equation*}
			\begin{aligned}
				f(t,x,v) &= \left( 1 + \sin(x - t)\right) e^{ - v^2}
				\\
				u(t,x) &= \sin(x - t).
			\end{aligned}
		\end{equation*}
	
	\end{exm}
	
\begin{figure}
	\centering
			\includegraphics[width=6.5cm]{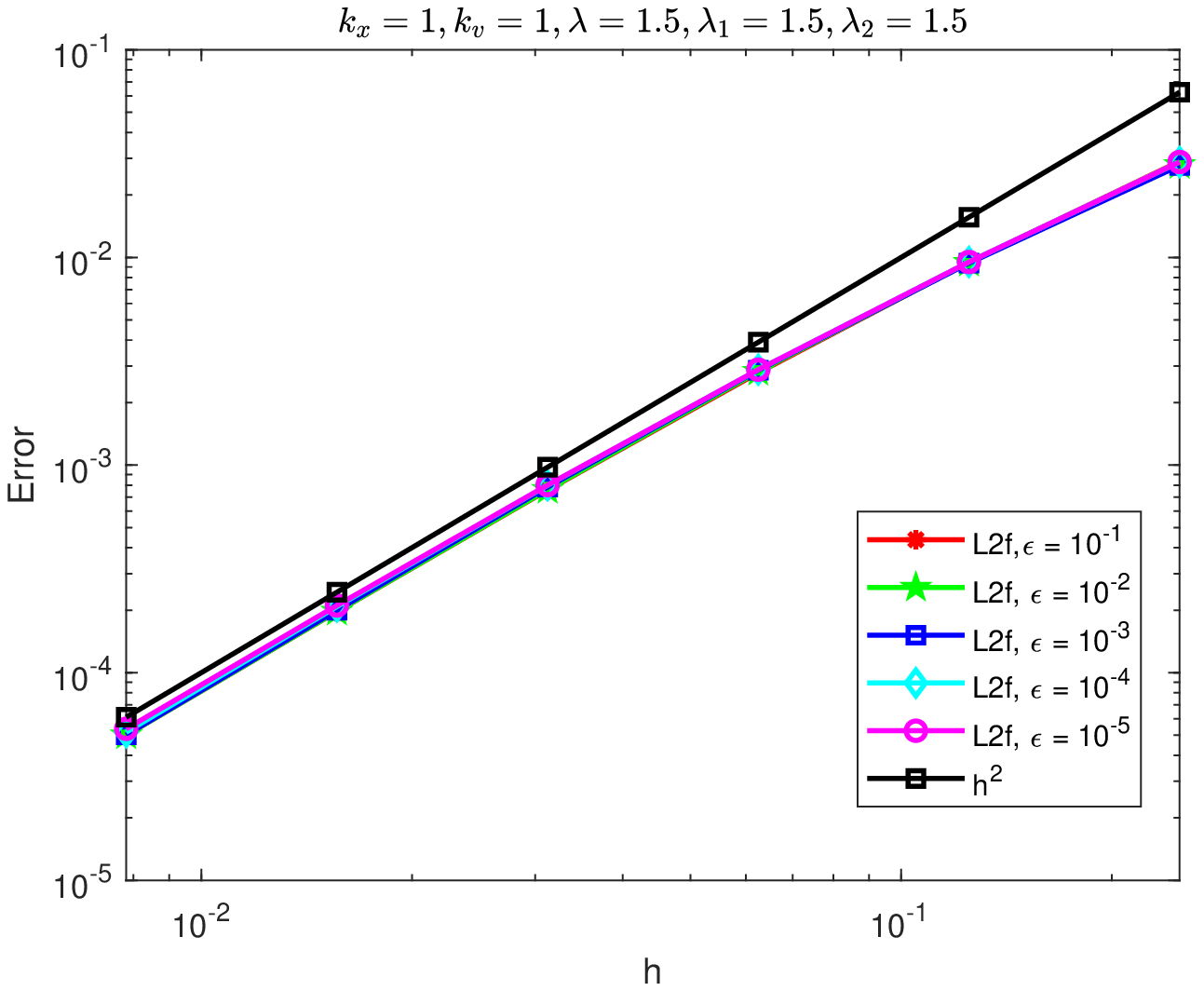}
		\includegraphics[width=6.5cm]{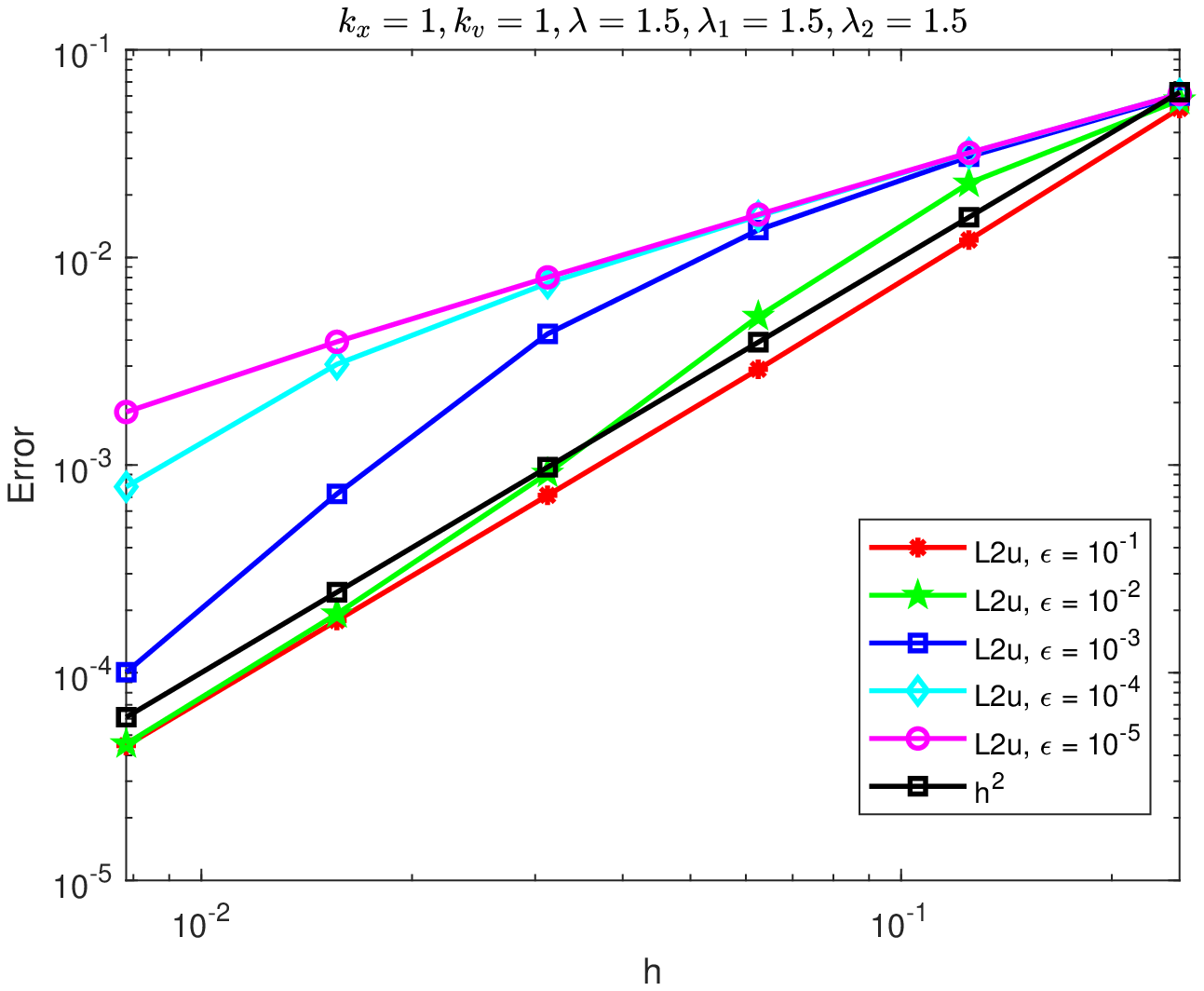}
\caption{Convergence rate for different values of $\epsilon$ of the distribution function $f$(Left) and the velocity $u$(Right) for the Example \ref{exm1}; $k_x = 1, k_v = 1$  }
\label{fig:1-1}
\end{figure}

\begin{figure}
	\centering
		\includegraphics[width=6.5cm]{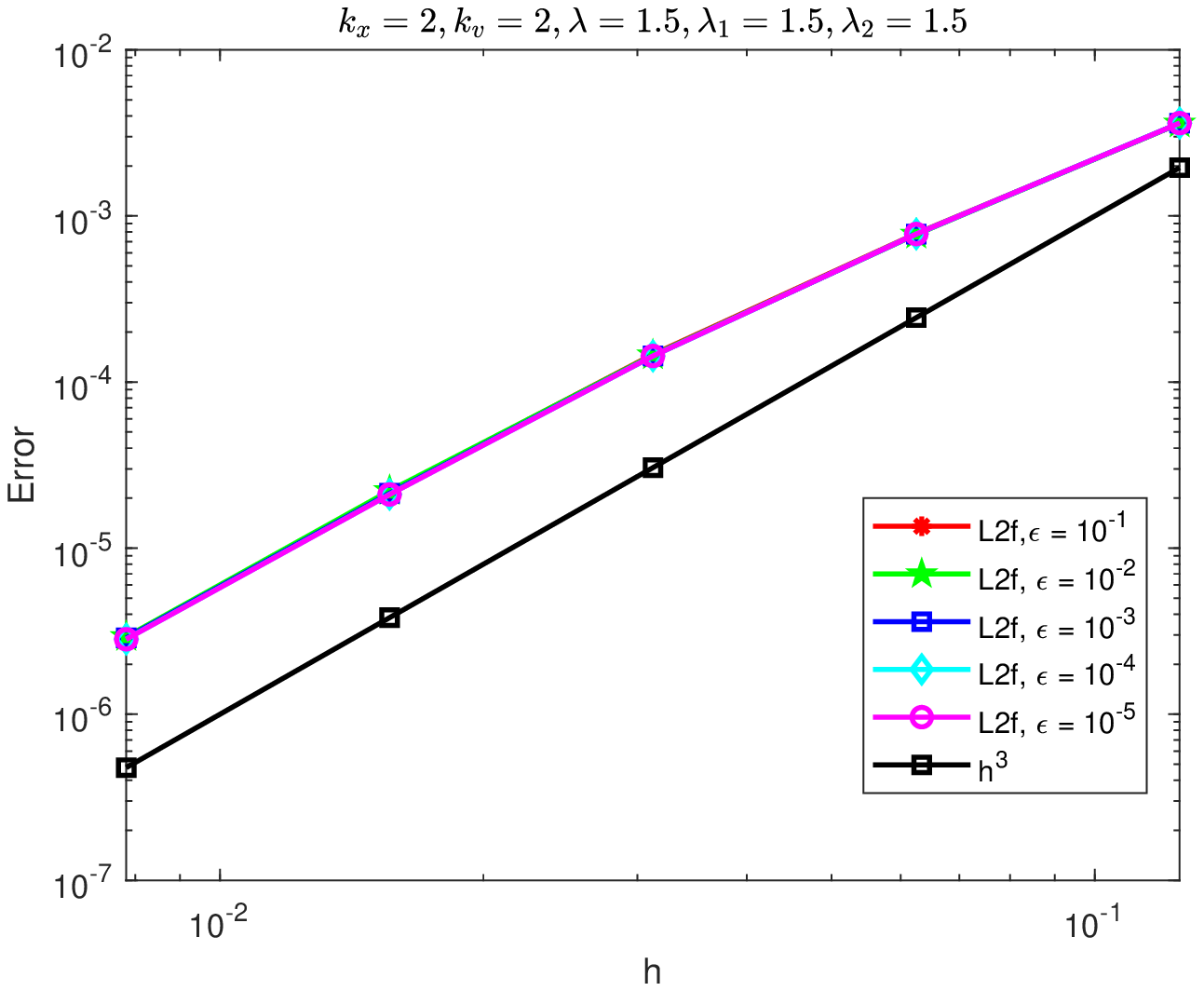}
		\includegraphics[width=6.5cm]{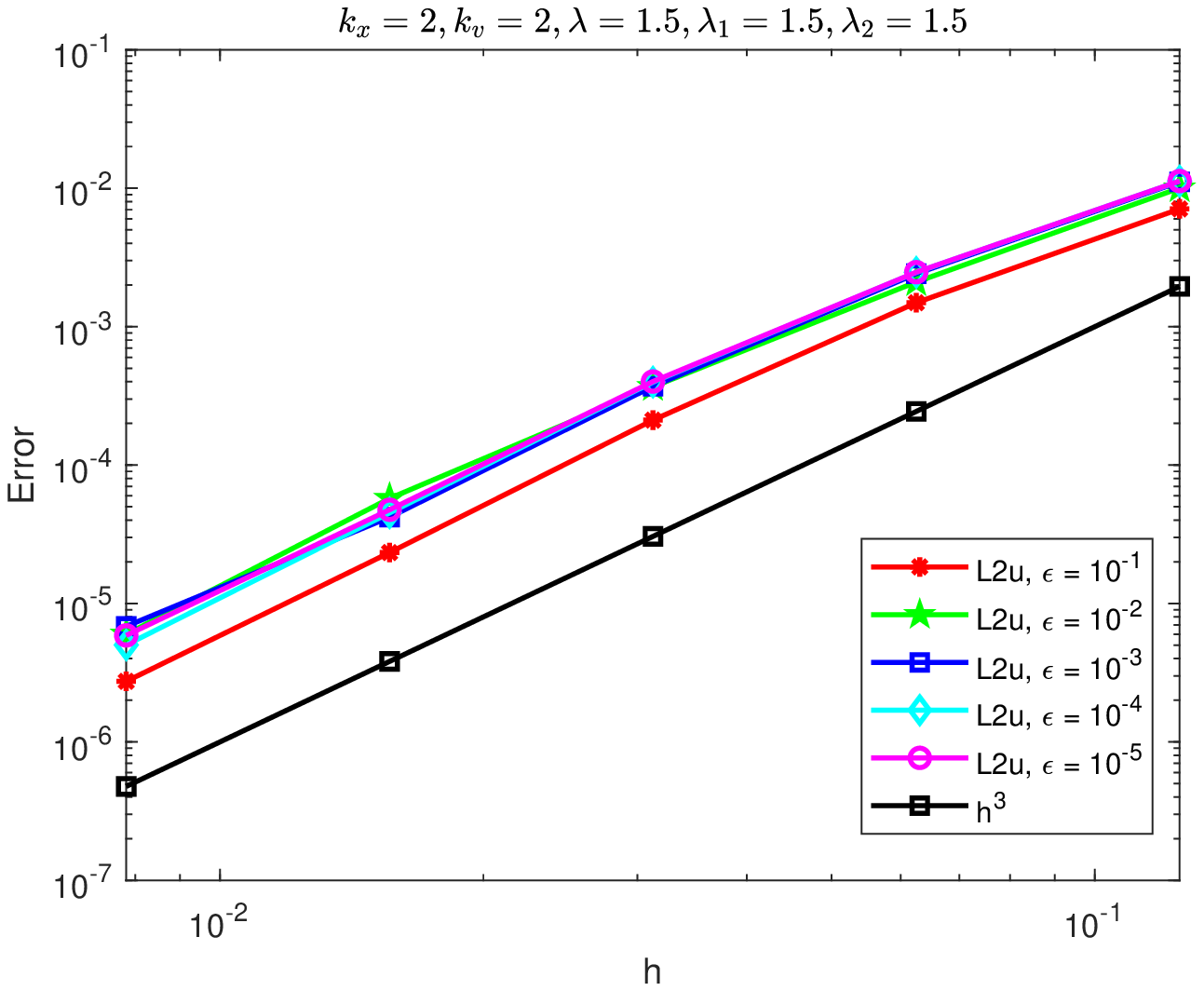}
	\caption{Convergence rate for different values of $\epsilon$ of the distribution function $f$(Left) and the velocity $u$(Right) for the Example \ref{exm1}; $k_x = 2, k_v = 2$  }
	\label{fig:1-2}
\end{figure}

\begin{figure}
	\centering
		\includegraphics[width=6.5cm]{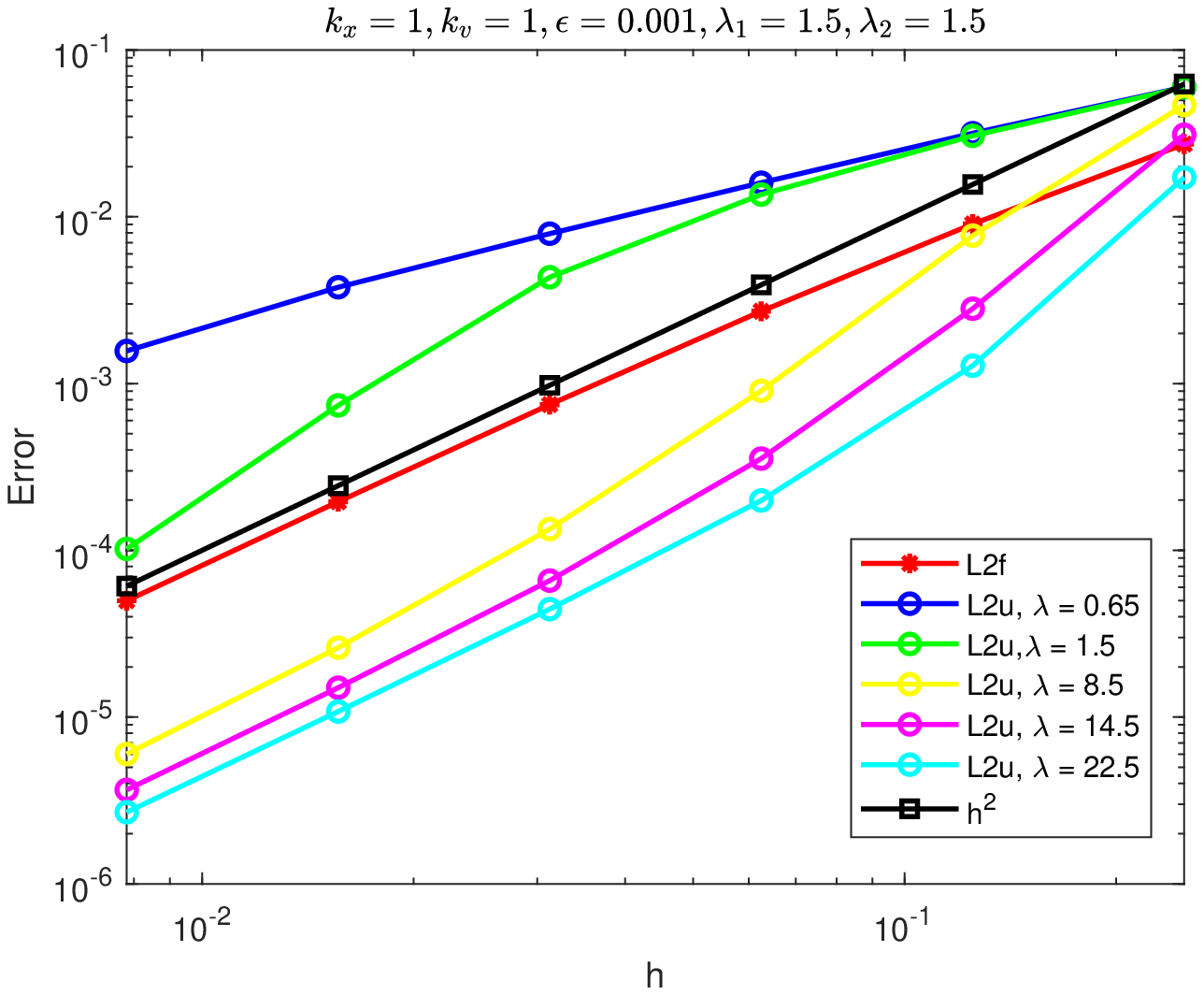}
		\includegraphics[width=6.5cm]{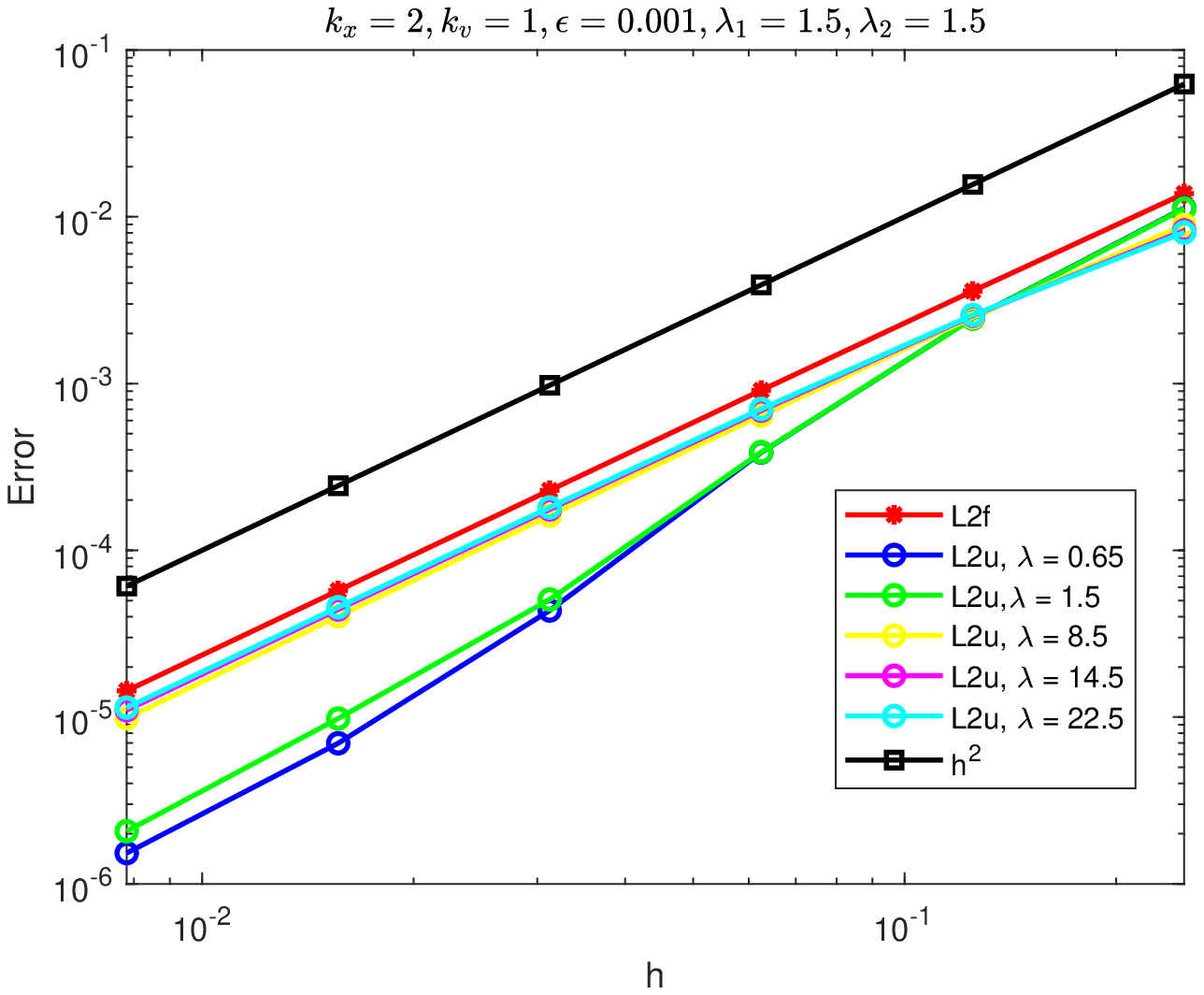}
	\caption{Convergence rate for different values of $\lambda$ of $u$ for the Example \ref{exm1}; $k_x = 1, k_v = 1$ (Left), $k_x = 2, k_v = 1$ (Right) }
	\label{fig:1-3}
\end{figure}

\begin{exm}\label{exm3}
	Take  
	\begin{equation*}
		\begin{aligned}
				F(t,x,v) &= \frac{e^t}{\sqrt{2\pi}}e^{\frac{-v^2}{2}}\left( + \left(e^{-t}\left(cos(x) + \sin(x)\right) - v \right)\left(1 + \cos(x)\right)\left(9v - 5v^3\right) \right.
				\\
				&\qquad \left. -v\,\sin(x)\left(1 + 5v^2\right)\right)
		\end{aligned}
	\end{equation*}
	\begin{equation*}
		\begin{aligned}
			G(t,x) &= \left(-1 + \epsilon \right)\left(\cos(x) + \sin(x)\right)e^{-t} + e^{-2t}\cos(2x) 
			\\
			&\quad+ \frac{1}{\sqrt{2\pi}}\left(4.202186105579451\right) \left( 1 + \cos(x) \right)\left(\cos(x) + \sin(x)\right)
		\end{aligned}
	\end{equation*}
	and 
	\[
	f(0,x,v) = \frac{e^{\frac{-v^2}{2}}}{\sqrt{2\pi}}\left(1 + \cos(x)\right)\left(1 + 5v^2\right), \quad u(0,x) = \cos(x) + \sin(x).
	\]
	The exact solution of the problem is given by
	\begin{equation*}
		\begin{aligned}
			f(t,x,v) &= \frac{e^t}{\sqrt{2\pi}}e^{\frac{-v^2}{2}}\left(1 + \cos(x)\right)\left(1 + 5v^2\right)
			\\
			u(t,x) &= e^{-t}\left(cos(x) + \sin(x)\right).
		\end{aligned}
	\end{equation*} 
\end{exm}

\begin{figure}
	\centering
	\includegraphics[width=6.5cm]{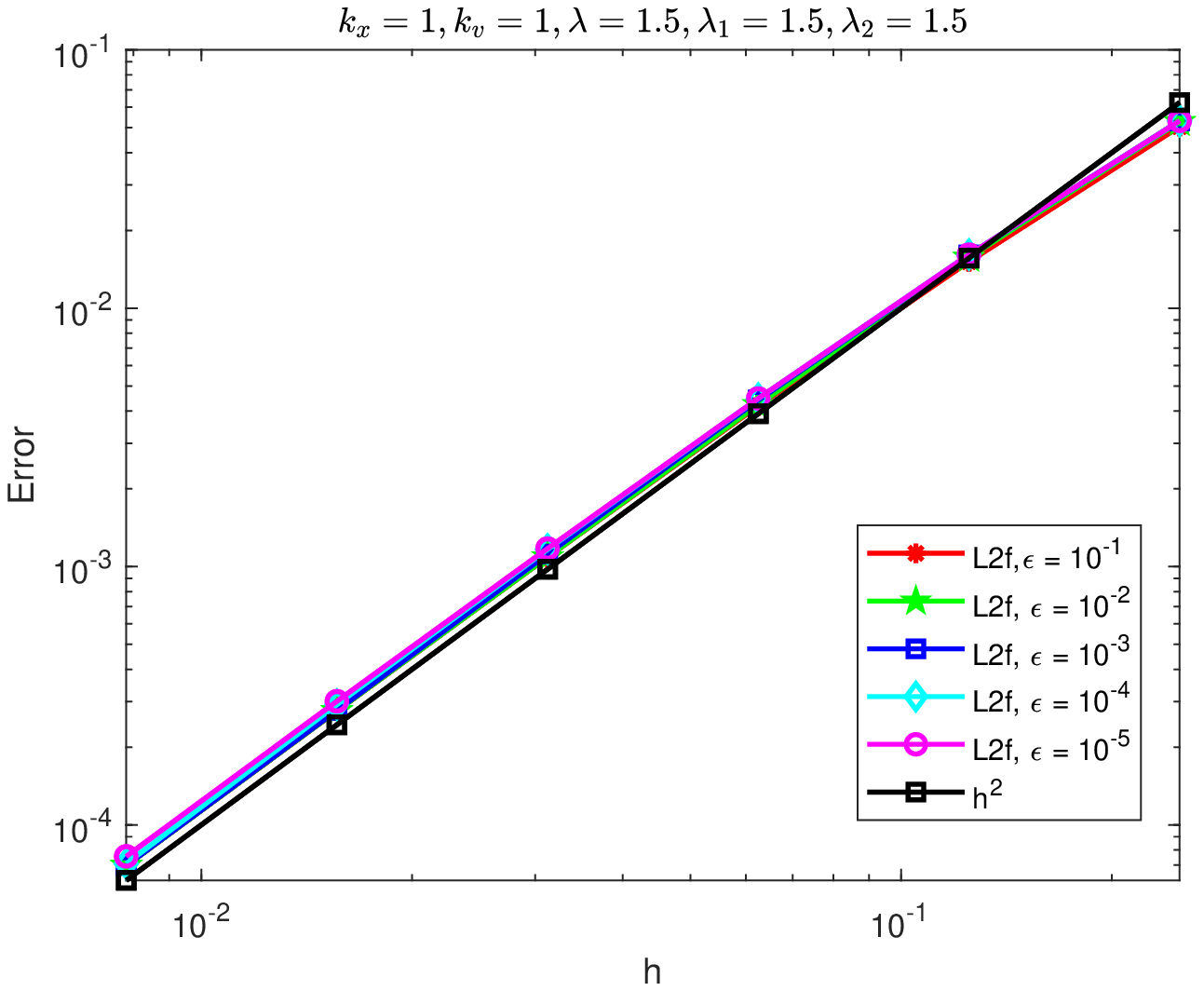}
	\includegraphics[width=6.5cm]{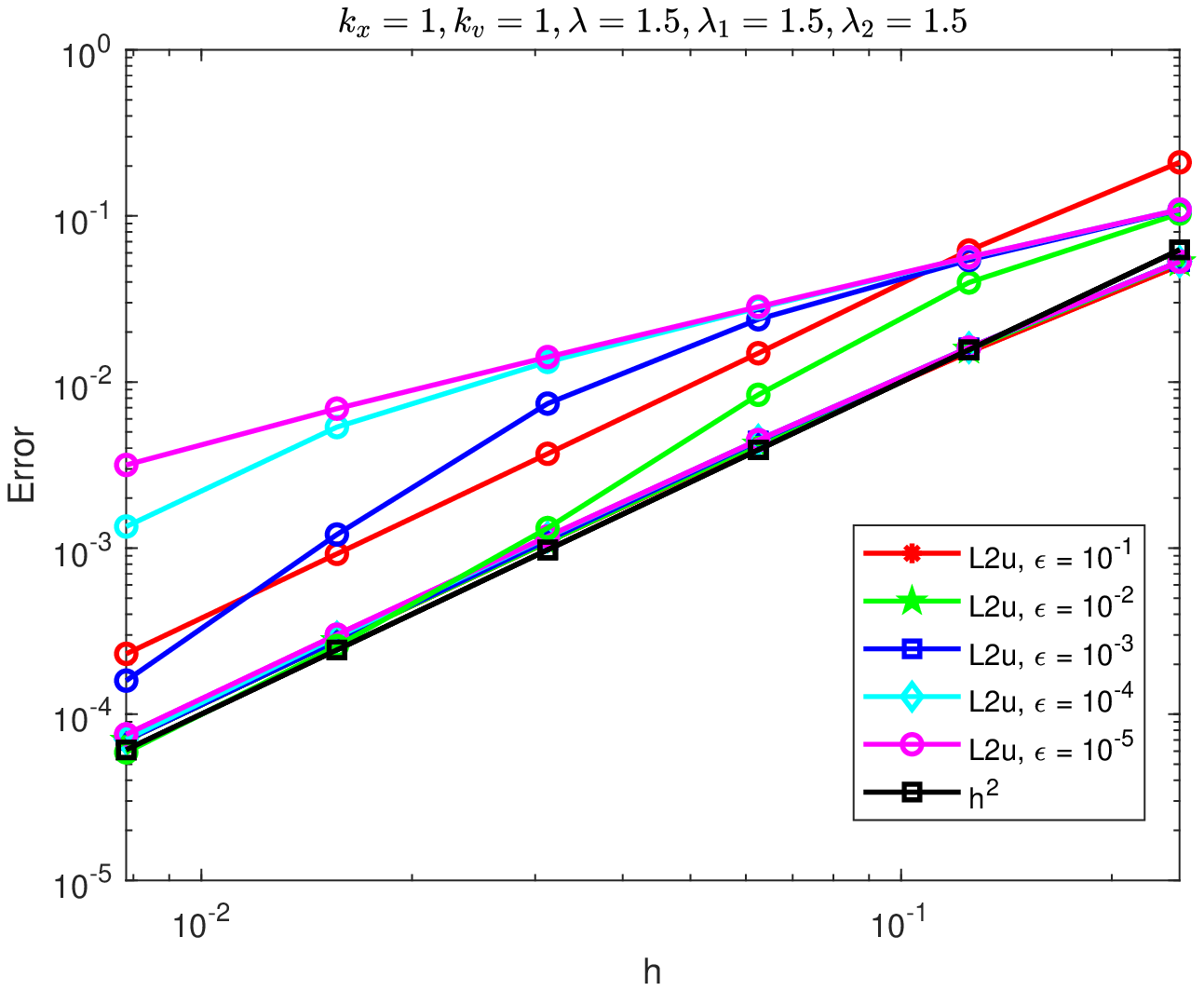}
	\caption{Convergence rate  for different values of $\epsilon$ of the  distribution function $f$(Left) and the velocity $u$(Right) for the Example \ref{exm3}; $k_x = 1, k_v = 1$  }
	\label{fig:2-1}
\end{figure}

\begin{figure}
	\centering
	\includegraphics[width=6.5cm]{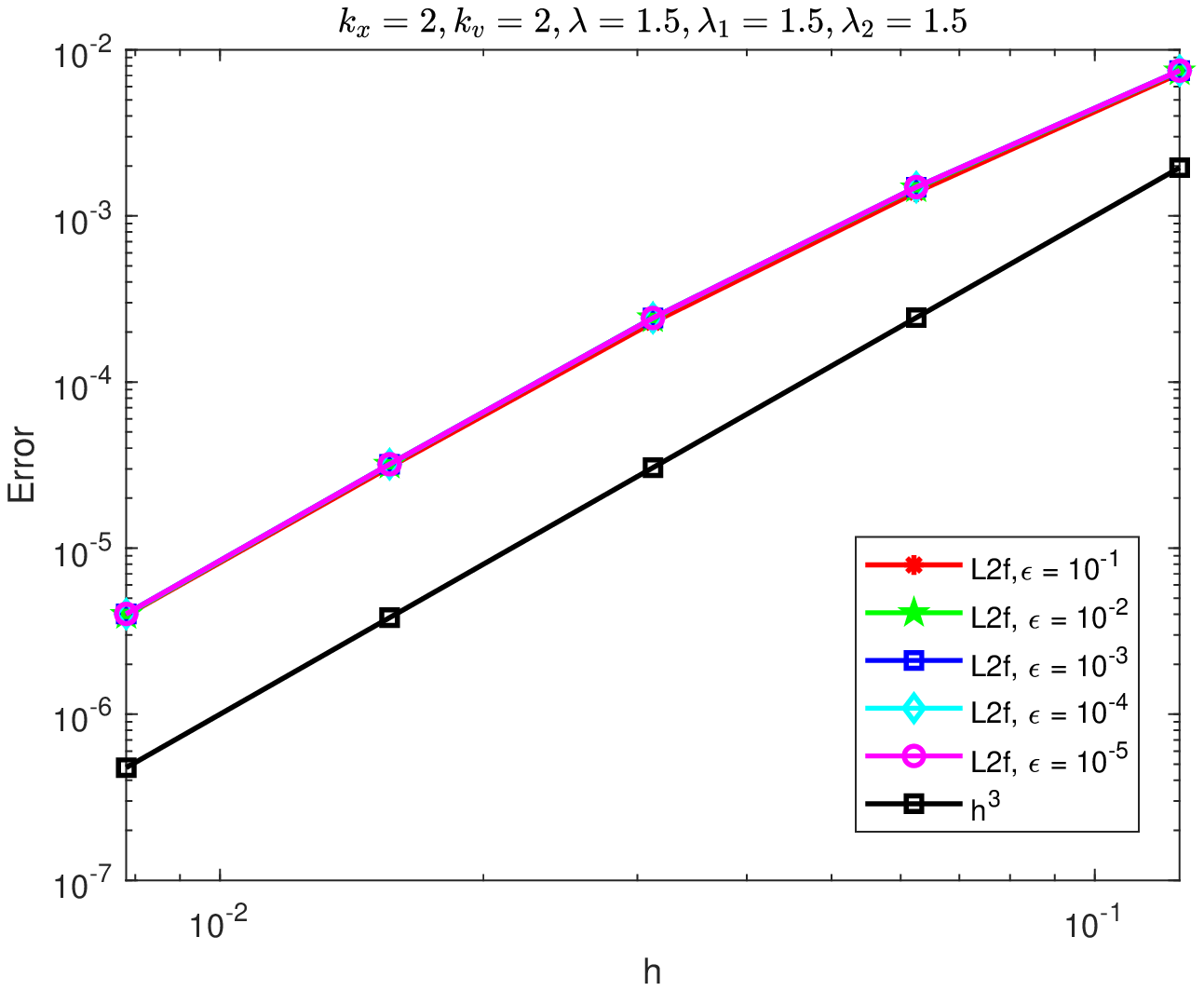}
	\includegraphics[width=6.5cm]{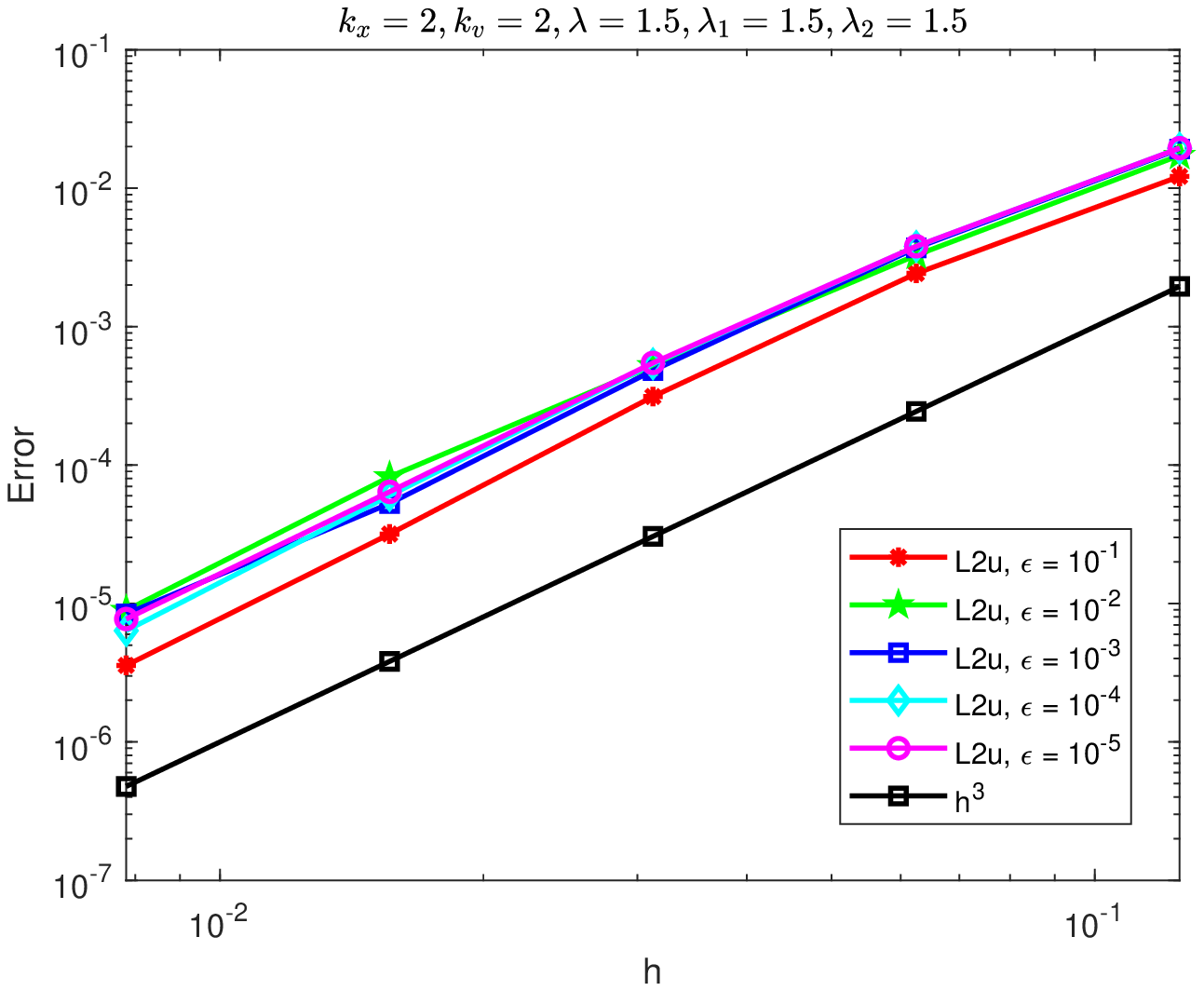}
	\caption{Convergence rate for different values of $\epsilon$ of the distribution function $f$(Left) and the velocity $u$(Right) for the Example \ref{exm3}; $k_x = 2, k_v = 2$  }
	\label{fig:2-2}
\end{figure}

\begin{figure}
	\centering
	\includegraphics[width=6.5cm]{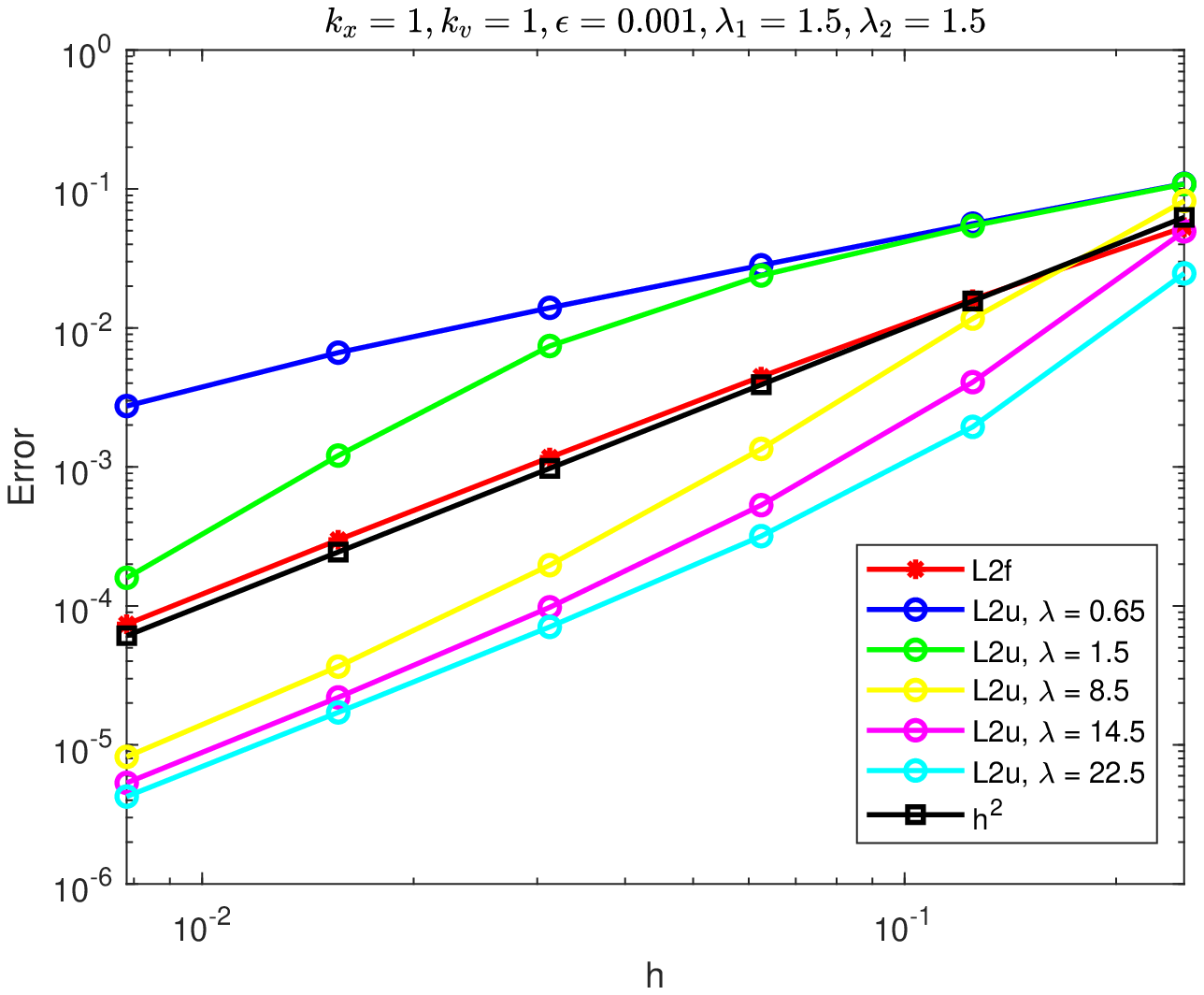}
	\includegraphics[width=6.5cm]{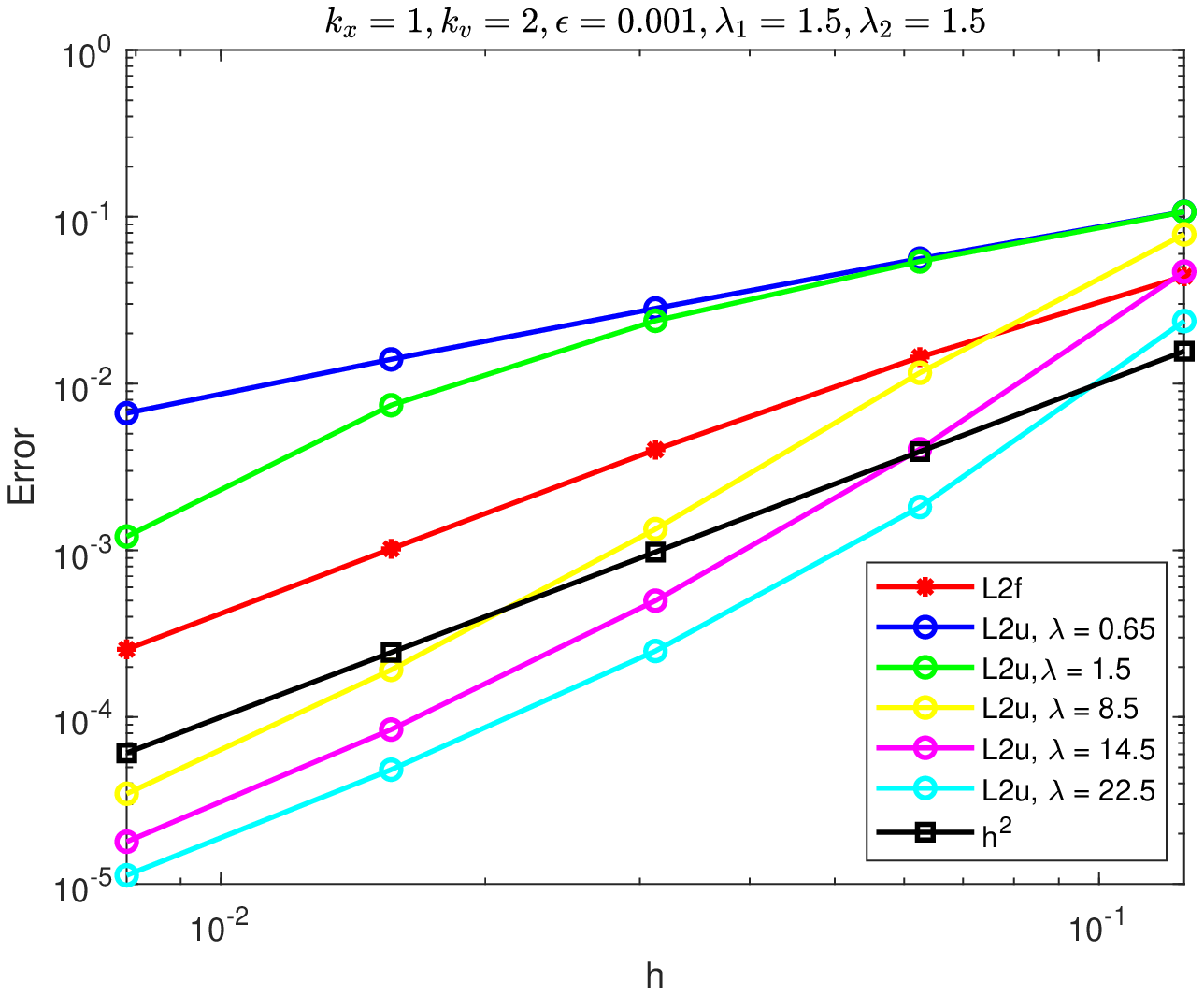}
	\caption{Convergence rate for different values of $\lambda$  of $u$ for the Example \ref{exm3}; $k_x = 1, k_v = 1$ (Left), $k_x = 1, k_v = 2$ (Right) }
	\label{fig:2-3}
\end{figure}	

\textbf{Observations:} We now make, below, several observations on numerical results of Examples \ref{exm1} and  \ref{exm3}.
\begin{itemize}
	\item [(i)] In Figures \ref{fig:1-1}--\ref{fig:1-2} and  Figures \ref{fig:2-1} --Figure \ref{fig:2-2}, $h$ varies like $2^{-m}, m=1,\cdots,7,$ while $\epsilon$ varies like $10^{-\ell}, \ell=1,\cdots,5$ with $\epsilon < h.$ From Theorem ~\ref{mmma1}, it is noted that the convergence for both $f$ and $u$ is  of order $O(\epsilon^{-1/2} h^{k+1}).$  In Figures \ref{fig:1-1} and  \ref{fig:2-1},
piecewise polynomial of degree $k=1$ is used and the computational order for $f$ matches with theoretical order of convergence, that is order $2$, but for $u$ there seems to be some deviation in order showing the effect of smaller $\epsilon.$
Figures \ref{fig:1-2} and \ref{fig:2-2} uses piecewise polynomial of degree $k=2$ and  computational order of convergence confirms the theoretical order of convergence for both $f$ and $u$, which seems to be uniform in $\epsilon.$
\item [(ii)] In Figure \ref{fig:1-3} and Figure \ref{fig:2-3}, we explore the dependence of the order of convergence on the magnitude of the parameter $\lambda$ appearing in the generalized fluxes for the Burgers' part. This experiment suggests a trade-off between the magnitude of the parameter $\lambda$ and that of the viscosity parameter $\epsilon$. More precisely, for smaller values of $\epsilon$, taking larger value of $\lambda$ helps stabilize the convergence rate.
\end{itemize}

In the next couple of experiments, we test the conservation properties of our proposed numerical scheme.
		
\begin{exm}\label{exm5.3}
	In \eqref{87}-\eqref{88}, let $I = [0,2\pi]$ and $J = [-5,5]$. Further, take $F = G = 0$. Consider the initial data: 
	\begin{equation*}
		\begin{aligned}
			f(0,x,v) &= \left\{
			\begin{aligned}
				&(1 + \sin(x))e^{-v^2} \quad \mbox{if}\quad v \in [-1,1]
				\\
				& \qquad 0 \qquad\qquad\qquad elsewhere.
			\end{aligned}
			\right. 
			\\
			u(0,x) &= sin(x).
		\end{aligned}
	\end{equation*}
	\end{exm}
	 
Note that we don't have access to an explicit representation of the exact solution. Now, we  take degree of polynomials $k_x = 1, k_v = 2$ and number of sub-intervals $N_x = 128, N_v = 128$ with $\epsilon = 0.1, \lambda = 1.5, \lambda_1 = 1.5, \lambda_2 = 1.5$. In Figure \ref{figm1}, we check the discrete mass and discrete momentum conservation properties of our numerical scheme. It confirms our findings in Lemmas \ref{lem:mass} and \ref{lem:momentum} on discrete mass and momentum conservation, respectively. Figure \ref{figm1} also shows that our numerical scheme dissipates the discrete energy. This hints at a result that such a discrete energy dissipation property should hold in general, but unlike in continuous case, we do not have any theoretical justification to substantiate this numerical evidence.

\begin{figure}
	\centering
	\includegraphics[width=6.5cm]{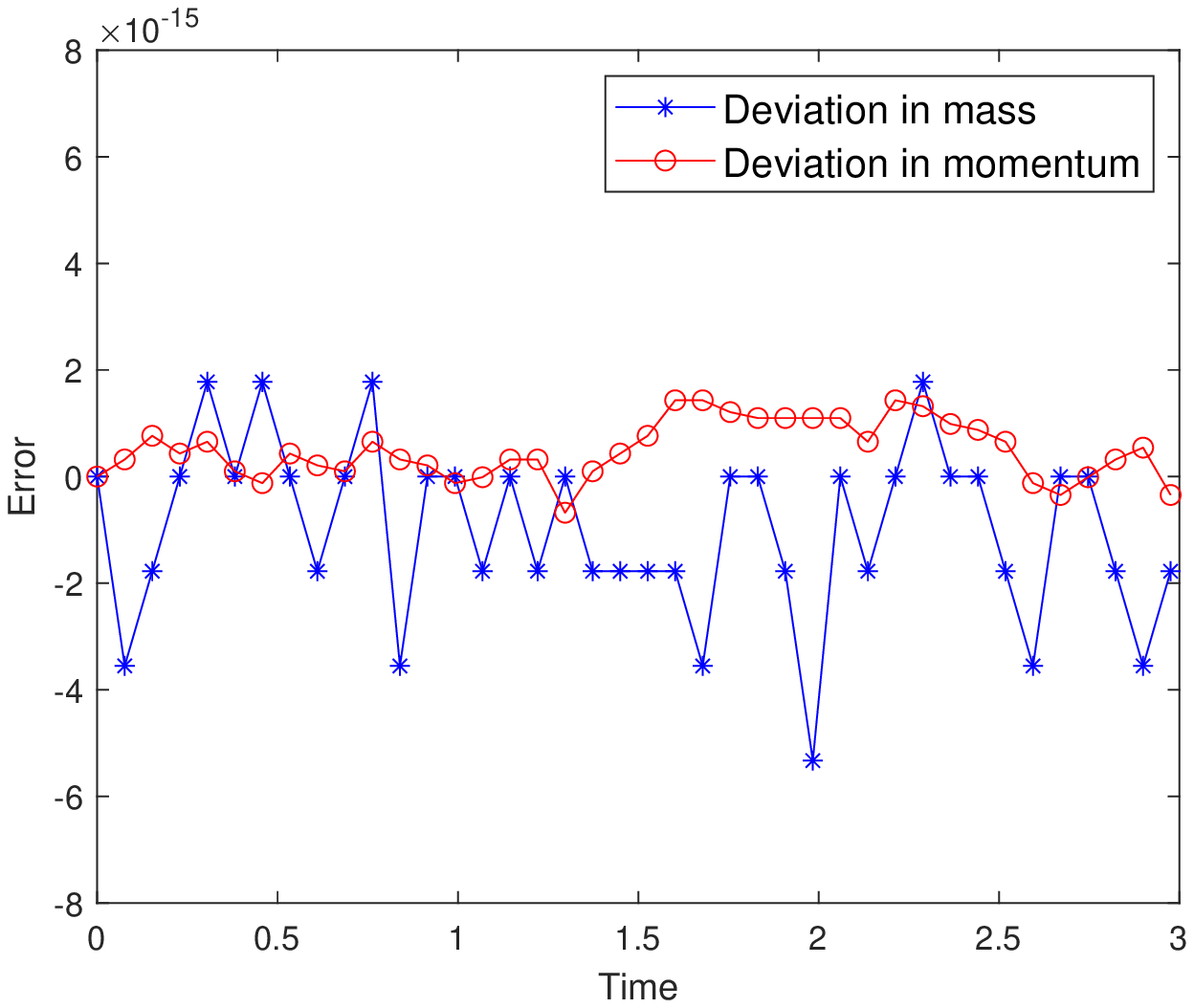} 
	\includegraphics[width=6.5cm]{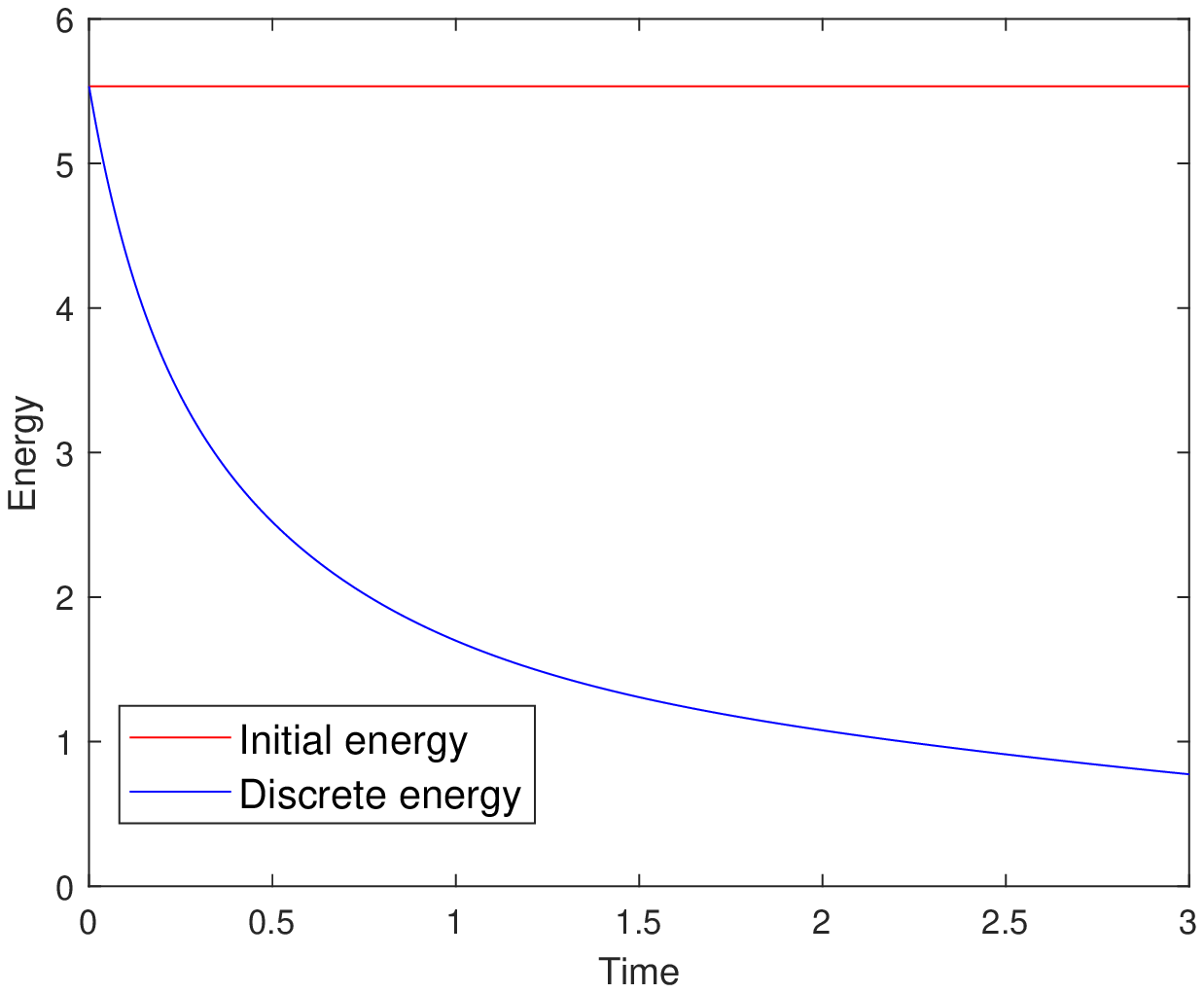} 
	\caption{Deviation in mass and momentum (Left) and energy dissipation (Right) for the Example \ref{exm5.3}}
	\label{figm1}
\end{figure}		

The following example repeats the above experiment for a different set of initial data.
\begin{exm}\label{exm5.4}
	In \eqref{87}-\eqref{88}, let $I = [0,2\pi]$ and $J = [-5,5]$. Further, take $F = G = 0$. Consider the initial data:
	\begin{equation*}
		\begin{aligned}
			f(0,x,v) &= \left\{
			\begin{aligned}
				&\frac{e^{\frac{-v^2}{2}}}{\sqrt{2\pi}}\left(1 + \cos(x)\right)\left(1 + 5v^2\right) \quad \mbox{if}\quad v \in [-1,1]
				\\
				&\qquad 0 \qquad\qquad\qquad\qquad elsewhere.
			\end{aligned}
			\right. 
			\\
			u(0,x) &= sin(x) + cos(x).
		\end{aligned}
	\end{equation*}
	\end{exm}
	Take degree of polynomials $k_x = 1, k_v = 2$ and number of sub-intervals $N_x = 128, N_v = 128$ with $\epsilon = 0.1, \lambda = 1.5, \lambda_1 = 1.5, \lambda_2 = 1.5$. In Figure \ref{figm2}, we note that deviation are up to the machine error and these results are in agreement with the theoretical findings. 
	
\begin{figure}
	\centering
	\includegraphics[width=6.5cm]{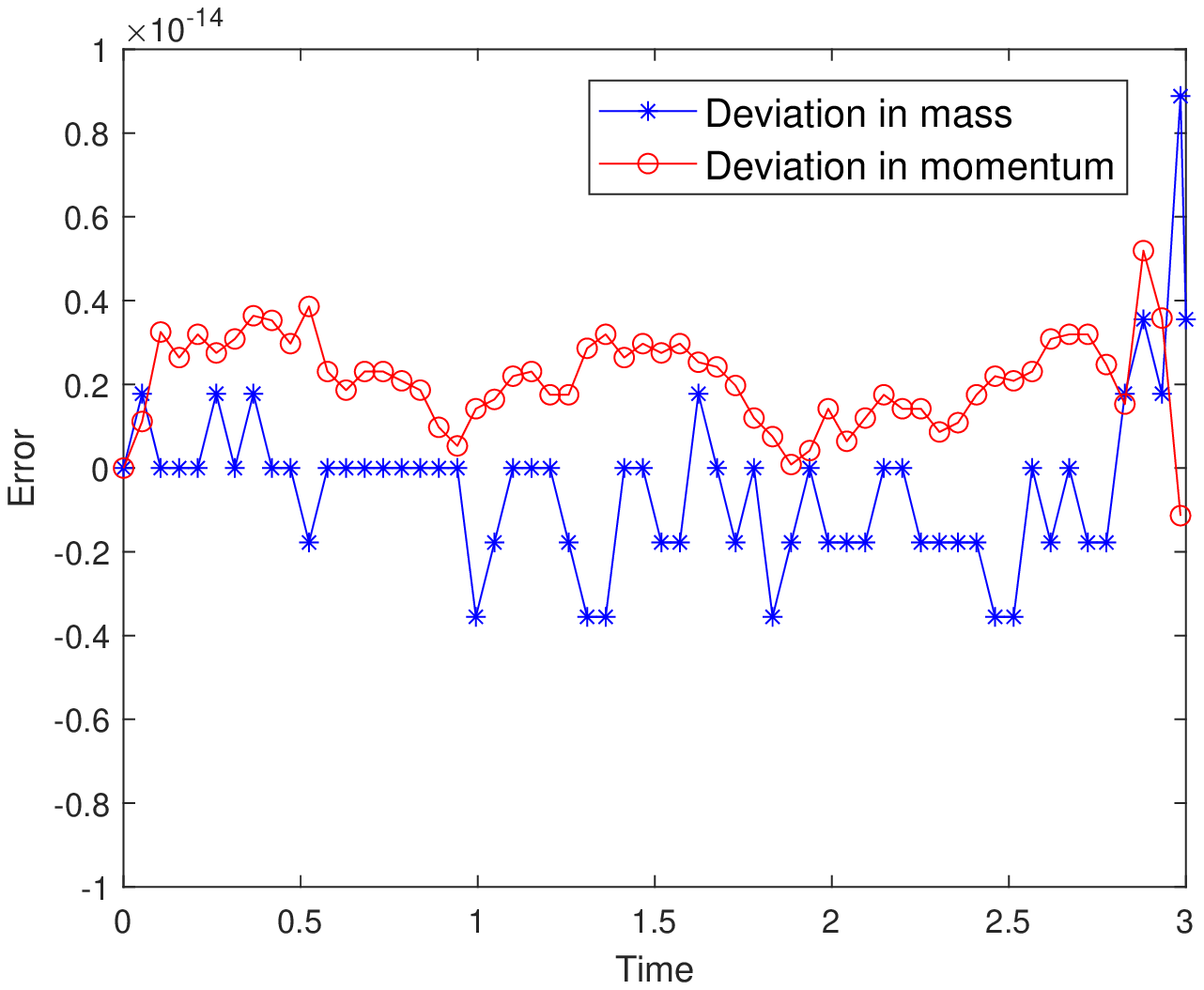} 
	\includegraphics[width=6.5cm]{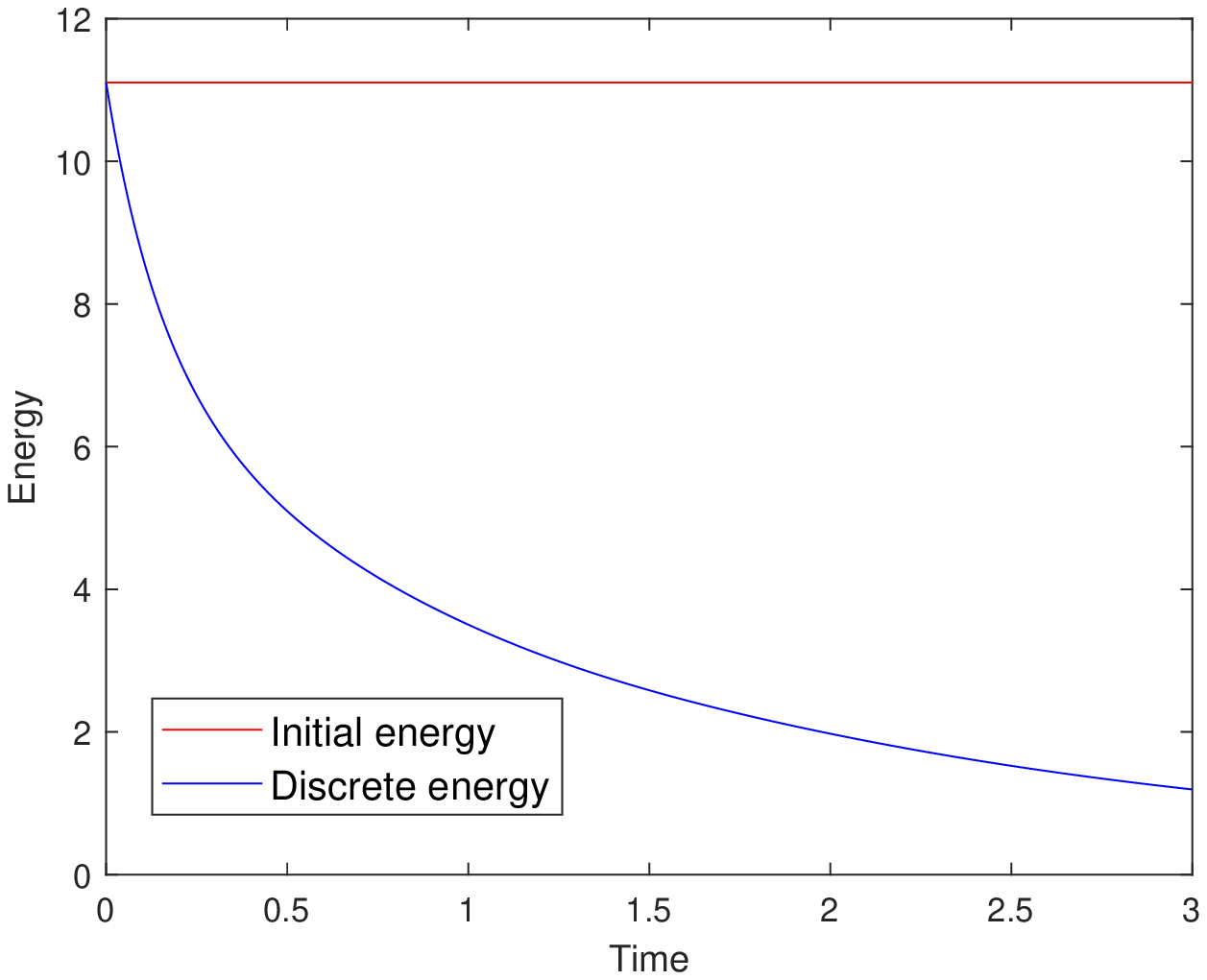} 
	\caption{Deviation in mass and momentum (Left) and energy dissipation (Right) for the Example \ref{exm5.4}}
	\label{figm2}
\end{figure}

	\section{Conclusion}\label{sec:conclude}
	
	In this article, a semi-discrete numerical method for the Vlasov-viscous Burgers' equation is introduced and analyzed. This is a DG method for the Vlasov and LDG method for the viscous Burgers' equations in phase space both with generalized numerical fluxes. The discrete scheme is mass and momentum preserving.
The optimal rate of convergence for $\lambda = 1/2$ and even degree of polynomial for $x$ and odd number of elements for space domain is derived. Further, optimal rates of convergence for $\lambda > 1/2$ and for both even and odd degree of polynomial for $x$ and $v$ are established, but  now the constant in error estimates depends on $\epsilon^{-\frac{1}{2}}.$ 
The main tools used for error estimates are the introduction of generalized Gauss-Radau projection and some application of a variant of non-linear Gr\"onwall's lemma. Finally, computational results confirm our theoretical findings.
	
\textbf{Acknowledgements.} Authors are grateful to anonymous  referees  for their valuable comments and suggestions which help to improve the revised manuscript. K.K. and H.H. thank Laurent Desvillettes for introducing them to the fluid-kinetic equations modelling the thin sprays during the Junior Trimester Program on Kinetic Theory organised at the Hausdorff Research Institute for Mathematics, Bonn. K.K. and H.H. thank the Hausdroff Institute of Mathematics, Bonn, for hosting them during the Junior Trimester program on Kinetic theory (Summer of 2019) where this work was initiated. K.K. further acknowledges the financial support of the University Grants Commission (UGC), Government of India. 
	
\vspace{2em}
\noindent
\textbf{Statements and Declarations}\\
\noindent
\textbf{Funding:}
The second author acknowledges the financial support of the University Grants Commission (UGC), Government of India. 
the first and second authors  thank the Hausdroff Institute of Mathematics, Bonn, for hosting them during the Junior Trimester program on Kinetic theory (Summer of 2019) where this work was initiated. 

\noindent
\textbf{Conflict of Interest:}
The authors declare that they have no conflict of interest.

\noindent
\textbf{Author Contributions:}
All authors contributed equally to prepare this manuscript. All authors read and approved the final manuscript.

\noindent
\textbf{Data Availability:}
The codes during the current study are available from the corresponding author on reasonable request.

	\bibliography{references}
	\bibliographystyle{amsalpha}
	
	\appendix
	\section{}\label{exststrong}
	
	The following lemma yields estimate for $u_x$ in $L^\infty$-norm which helps us to prove the regularity result for viscous Burgers' equation.
	
	\begin{lem}\label{uxinf}
		For a periodic function $u$ with $u \in C^2(I)$, there holds
		\begin{equation*}
			\|u_x(t)\|_{L^\infty(I)} \leq \sqrt{2}\,\|u_x(t)\|^\frac{1}{2}_{L^2(I)}\|u_{xx}(t)\|_{L^2(I)}^\frac{1}{2}.
		\end{equation*}
	\end{lem}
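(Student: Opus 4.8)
The plan is to prove this Gagliardo--Nirenberg type interpolation inequality by exploiting the periodicity of $u$ to locate a zero of $u_x$, and then to control the pointwise size of $u_x$ via the fundamental theorem of calculus together with the Cauchy--Schwarz inequality.

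First I would observe that periodicity of $u$ forces $u_x$ to have vanishing mean: integrating over $I = [0,L]$ gives $\int_I u_x\,{\rm d}x = u(L) - u(0) = 0$. Since $u \in C^2(I)$, the derivative $u_x$ is continuous, and a continuous function with zero mean on $[0,L]$ must vanish at some point $x_0 \in I$ (by the intermediate value theorem, as it cannot be strictly one-signed). This zero is the key device that replaces the boundary information one would otherwise need.

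Next, for an arbitrary $x \in I$ I would write $u_x^2(x) = u_x^2(x) - u_x^2(x_0)$ and apply the fundamental theorem of calculus to $(u_x^2)' = 2\,u_x u_{xx}$, obtaining $u_x^2(x) = 2\int_{x_0}^x u_x u_{xx}\,{\rm d}s$. Bounding the right-hand side by its absolute value, enlarging the range of integration to all of $I$, and then applying the Cauchy--Schwarz inequality yields
\[
u_x^2(x) \le 2\int_I |u_x|\,|u_{xx}|\,{\rm d}s \le 2\,\|u_x\|_{L^2(I)}\,\|u_{xx}\|_{L^2(I)},
\]
uniformly in $x$. Taking the supremum over $x \in I$ and then a square root gives the claimed bound with constant $\sqrt{2}$.

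I do not anticipate a genuine obstacle here; the only point requiring a little care is the existence of the zero $x_0$, which rests on the periodicity-induced mean-zero property of $u_x$ rather than on any imposed boundary condition for $u_x$ itself. Once that observation is in hand, the remaining estimate is a direct computation.
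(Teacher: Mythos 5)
Your proposal is correct and follows essentially the same route as the paper's own proof: both use periodicity to obtain a mean-zero $u_x$ and hence a zero $x_0$ of $u_x$, then apply the fundamental theorem of calculus to $u_x^2$ and Cauchy--Schwarz to get the bound with constant $\sqrt{2}$. Your write-up is in fact slightly more careful than the paper's (you justify the existence of $x_0$ via the intermediate value theorem and note the enlargement of the integration range), but the argument is the same.
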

	
	\begin{proof}
		As $u$ is periodic. Hence, $\int_I u_x\,{\rm d}x = 0$ and there is some $x_0 \in I$ such that $u_x(x_0) = 0$
		\begin{equation*}
			\begin{aligned}
				|u_x|^2 = \left\vert\int_{x_0}^{x}\partial_x(u_x)^2\,{\rm d}x\right\vert = 2\left\vert\int_{x_0}^{x}u_x\,u_{xx}\,{\rm d}x\right\vert \leq 2\,\|u_x\|_{L^2(I)}\,\|u_{xx}\|_{L^2(I)}.
			\end{aligned}
		\end{equation*}
		Here, in first step use fundamental theorem of Calculus and in last step use the Cauchy-Schwarz inequality. After taking square root on both side and supremum over $x \in I$ completes the proof.
	\end{proof}

	The following lemma shows the regularity result for the solution of viscous Burgers' equation.
	
	\begin{lem}\label{uregularity}
		Let $\int_{\R}\int_I\,|v|^pf_0\,{\rm d}x\,{\rm d}v < \infty$ for $0 \leq p \leq 3$ and $u_0 \in H^1(I)$. Then
		\begin{equation}
			u \in L^2(0,T;H^2(I)) \cap L^\infty(0,T;H^1(I)) \cap H^1(0,T;L^2(I)).
		\end{equation}
	\end{lem}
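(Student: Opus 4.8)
The plan is to derive a single a priori $H^1$ energy estimate for $u$ and then read off the three regularity statements in turn; the starting point is the collection of bounds already in hand, namely the energy identity \eqref{3} giving $u \in L^\infty(0,T;L^2(I))$ with $u_x \in L^2([0,T]\times I)$, the Sobolev consequence $u \in L^2(0,T;L^\infty(I))$, the fourth-power bound \eqref{7}, and the source-term regularity \eqref{rhos} coming from Lemma \ref{density} at $p=3$. A preliminary observation I would record is that $\rho V - \rho u \in L^2([0,T]\times I)$: indeed $\rho V \in L^\infty(0,T;L^2(I))$ directly, while $\|\rho u\|_{L^2(I)} \le \|\rho\|_{L^4(I)}\|u\|_{L^4(I)}$ by H\"older, and integrating in time the right side is finite by \eqref{rhos} and \eqref{7}.

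The heart of the argument is to test \eqref{contburger} (formally, after a Galerkin or mollification step that I would justify by passing to the limit) with $-u_{xx}$ and integrate over $I$. Using periodicity, the time term becomes $\tfrac12 \tfrac{d}{dt}\|u_x\|_{L^2(I)}^2$ and the viscous term contributes $\epsilon\|u_{xx}\|_{L^2(I)}^2$, so that
\[
\frac{1}{2}\frac{d}{dt}\|u_x\|_{L^2(I)}^2 + \epsilon\|u_{xx}\|_{L^2(I)}^2 = \int_I u\,u_x\,u_{xx}\,{\rm d}x - \int_I(\rho V - \rho u)\,u_{xx}\,{\rm d}x.
\]
The decisive choice is how to treat the nonlinear term. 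Bounding it as $\|u\|_{L^\infty(I)}\|u_x\|_{L^2(I)}\|u_{xx}\|_{L^2(I)}$ and applying Young's inequality yields a contribution $C\epsilon^{-1}\|u\|_{L^\infty(I)}^2\|u_x\|_{L^2(I)}^2$ in which the coefficient $\|u\|_{L^\infty(I)}^2$ is integrable in time thanks to $u \in L^2(0,T;L^\infty(I))$; the source term is handled by Cauchy--Schwarz and Young, absorbing a second $\tfrac{\epsilon}{4}\|u_{xx}\|_{L^2(I)}^2$. After absorption this gives
\[
\frac{d}{dt}\|u_x\|_{L^2(I)}^2 + \epsilon\|u_{xx}\|_{L^2(I)}^2 \le a(t)\|u_x\|_{L^2(I)}^2 + b(t),
\]
with $a(t) = C\epsilon^{-1}\|u\|_{L^\infty(I)}^2$ and $b(t) = C\epsilon^{-1}\|\rho V - \rho u\|_{L^2(I)}^2$ both in $L^1(0,T)$.

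Since $a,b \in L^1(0,T)$ and $\|u_x(0)\|_{L^2(I)} = \|u_0'\|_{L^2(I)} < \infty$ because $u_0 \in H^1(I)$, the linear Gr\"onwall lemma bounds $\|u_x(t)\|_{L^2(I)}^2$ uniformly on $[0,T]$, giving $u \in L^\infty(0,T;H^1(I))$; integrating the same inequality in time controls $\epsilon\int_0^T\|u_{xx}\|_{L^2(I)}^2\,{\rm d}t$, hence $u \in L^2(0,T;H^2(I))$. Finally I would obtain the time regularity by writing $u_t = \epsilon u_{xx} - u\,u_x + \rho V - \rho u$ and estimating in $L^2([0,T]\times I)$ term by term: $\epsilon u_{xx}$ is controlled by the $H^2$ bound just proved, $\rho V - \rho u$ by the preliminary observation, and $u\,u_x$ by $\|u\,u_x\|_{L^2(I)} \le \|u\|_{L^\infty(I)}\|u_x\|_{L^2(I)}$ together with the $L^\infty(0,T;L^2)$ bound on $u_x$ and the time-integrability of $\|u\|_{L^\infty(I)}^2$, giving $u \in H^1(0,T;L^2(I))$. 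The main obstacle is precisely the nonlinear term: the alternative estimate $\int_I u\,u_x\,u_{xx}\,{\rm d}x = -\tfrac12\int_I u_x^3\,{\rm d}x$ combined with Lemma \ref{uxinf} produces a superlinear $\|u_x\|_{L^2(I)}^{10/3}$ term and hence a Riccati-type inequality that only yields local-in-time control; routing the nonlinearity through $\|u\|_{L^\infty(I)}$ and exploiting $u\in L^2(0,T;L^\infty(I))$ is what linearizes the Gr\"onwall argument and delivers the global-in-time bounds.
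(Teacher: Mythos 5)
Your proposal is correct, but it takes a genuinely different route through the two places where the paper's proof has structure. For the nonlinear term, the paper estimates $\left(uu_x,u_{xx}\right) \leq \|u\|_{L^2(I)}\|u_x\|_{L^\infty(I)}\|u_{xx}\|_{L^2(I)}$, invokes the interpolation inequality of Lemma \ref{uxinf} to write $\|u_x\|_{L^\infty(I)} \lesssim \|u_x\|_{L^2(I)}^{1/2}\|u_{xx}\|_{L^2(I)}^{1/2}$, and applies Young with exponents $(4,4/3)$ to land on $\epsilon^{-3}\|u\|^4_{L^2(I)}\|u_x\|^2_{L^2(I)} + \tfrac{3\epsilon}{4}\|u_{xx}\|^2_{L^2(I)}$; since $\|u\|^4_{L^2(I)} \in L^\infty(0,T)$ and $\|u_x\|^2_{L^2(I)} \in L^1(0,T)$ are both already known from the energy identity \eqref{3}, the paper integrates the resulting inequality directly and needs \emph{no} Gr\"onwall argument at all. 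You instead route the nonlinearity through $\|u\|_{L^\infty(I)}\|u_x\|_{L^2(I)}\|u_{xx}\|_{L^2(I)}$ and close with a linear Gr\"onwall lemma whose $L^1$-in-time coefficient is $C\epsilon^{-1}\|u\|^2_{L^\infty(I)}$, exploiting $u \in L^2(0,T;L^\infty(I))$; this is simpler (no interpolation lemma needed) and perfectly rigorous, but it produces a constant of the form $\exp\bigl(C\epsilon^{-1}\|u\|^2_{L^2(0,T;L^\infty(I))}\bigr)$, which is exponential in $\epsilon^{-1}$ (indeed roughly $e^{C\epsilon^{-2}}$, since the $L^2_tL^\infty_x$ norm itself degenerates like $\epsilon^{-1/2}$), whereas the paper's bound is polynomial, $C\epsilon^{-3}$ — in keeping with the paper's announced concern about avoiding exponential dependence on $\epsilon^{-1}$. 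For a fixed $\epsilon>0$ and the purely qualitative membership claimed by the lemma, this difference is immaterial. For the time regularity, you read $u_t = \epsilon u_{xx} - uu_x + \rho V - \rho u$ off the equation and estimate term by term in $L^2([0,T]\times I)$, while the paper tests the equation with $u_t$ and uses a kickback argument; these are essentially equivalent, and your preliminary observation that $\rho V - \rho u \in L^2([0,T]\times I)$ (via \eqref{rhos}, \eqref{7} and H\"older) is exactly the input both versions need.
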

	
	\begin{proof}
		Multiplying equation \eqref{contburger} by $- u_{xx}$ and integrate with respect to $x$ to obtain
		\begin{equation}\label{1}
			\frac{1}{2}\partial_t\|u_x\|^2_{L^2(I)} + \epsilon\,\|u_{xx}\|^2_{L^2(I)} = -\left(\rho V, u_{xx}\right) + \left(\rho u, u_{xx}\right) + \left(uu_x,u_{xx}\right).
		\end{equation} 
		A use of the H\"older inequality with Lemma \ref{uxinf} and the Young's inequality yields
		\begin{equation}\label{2}
			\begin{aligned}
				\left(uu_x,u_{xx}\right) &\leq \|u\|_{L^2(I)}\|u_x\|_{L^\infty(I)}\|u_{xx}\|_{L^2(I)}
				\\
				&\leq \sqrt{2}\,\|u\|_{L^2(I)}\|u_x\|_{L^2(I)}^\frac{1}{2}\|u_{xx}\|^\frac{3}{2}_{L^2(I)}
				\\
				&\leq \epsilon^{-3}\,\|u\|^4_{L^2(I)}\|u_x\|^2_{L^2(I)} + \frac{3\epsilon}{4}\|u_{xx}\|^2_{L^2(I)}.		     
			\end{aligned}
		\end{equation}
		From equation \eqref{1} after a use of the H\"older inequality, the Young's inequality and equation \eqref{2}  with kickback argument, we obtain
		\begin{equation*}
			\begin{aligned}
				\partial_t\|u_x\|^2_{L^2(I)} + \frac{2\epsilon}{5}\,\|u_{xx}\|^2_{L^2(I)} \lesssim \epsilon^{-3}\left(\|\rho V\|^2_{L^2(I)} + \|\rho\|^2_{L^2(I)}\|u\|^2_{L^\infty(I)} + \|u\|^4_{L^2(I)}\|u_x\|^2_{L^2(I)}\right).
			\end{aligned}
		\end{equation*}
		A use of Sobolev inequality and an integration in time from $0$ to $t$ leads to 
		\begin{equation*}
			\begin{aligned}
				\|u_x\|^2_{L^2(I)} + \frac{2\epsilon}{5}\int_{0}^{t}\|u_{xx}\|^2_{L^2(I)}\,{\rm d}s &\lesssim  \epsilon^{-3}\,\|\rho V\|^2_{L^2([0,T]\times I)} 
				\\
				& \quad+ \epsilon^{-3}\,\int_{0}^{t}\left(\|\rho\|^2_{L^2(I)}+ \|u\|^4_{L^2(I)}\right)\|u_x\|^2_{L^2(I)}\,{\rm d}s
				\\
				&\lesssim  \epsilon^{-3}\,\|\rho V\|^2_{L^2([0,T]\times I)} + \epsilon^{-3}\,\left(\|\rho\|^2_{L^\infty(0,T;L^2(I))} \right.
				\\
				&\quad \left. + \|u\|^4_{L^\infty(0,T;L^2(I))}\right)\|u_x\|^2_{L^2([0,T]\times I)}.
			\end{aligned}
		\end{equation*}
		A use of \eqref{3} and \eqref{rhos} shows
		\begin{equation*}
			\begin{aligned}
				\|u_x\|^2_{L^2(I)} + \frac{2\epsilon}{5}\int_{0}^{t}\|u_{xx}\|^2_{L^2(I)}\,{\rm d}s \leq C\,\epsilon^{-3}.
			\end{aligned}
		\end{equation*}
		After taking supremum over $t \in [0,T]$, we obtain the first result. 
		
		Now, multiply equation \eqref{contburger} by $u_t$ and integration with respect to $x$ follows
		\begin{equation*}
			\|u_t\|^2_{L^2(I)} + \frac{\epsilon}{2}\partial_t\|u_x\|^2_{L^2(I)} = \left(\rho V,u_t\right) + \left(\rho u, u_t\right) - \left(u\,u_x, u_t\right). 
		\end{equation*} 
		A use of the H\"older inequality with the Young's inequality, kickback argument and an integration in time completes the rest of the proof.
	\end{proof}
	
	As a consequence of above lemma, it follows that
	\begin{equation*}
		\begin{aligned}
			u \in L^2(0,T;H^2(I)) \subset L^2(0,T;W^{1,\infty}(I)) \subset L^1(0,T;W^{1,\infty}(I)).
		\end{aligned}
	\end{equation*}
Here, in second step use Sobolev inequality and in third step use the H\"older inequality.
	
	
	The following result is on the propagation of velocity moments which is crucial for the proof of existence and uniqueness of strong solution.

	\begin{lem}\label{momentbddgrad}
		Let $u \in L^1(0,T;W^{1,\infty}(I))$ and let $f_0 \geq 0$ be such that 
		\[
		\int_{\R}\int_I\,\vert v\vert^k\left\{f_0 + \vert \partial_xf_0\vert^2 + \vert \partial_vf_0\vert^2\right\}\,{\rm d}x\,{\rm d}v \leq C,
		\]
		for $k \geq 0$. Then, the solution $f$ of the Vlasov equation satisfies
		\[
		\int_{\R}\int_I\,\vert v\vert^k\left\{f + \vert \partial_xf\vert^2 + \vert \partial_vf\vert^2\right\}\,{\rm d}x\,{\rm d}v \leq C,
		\]
		for $k \geq 0$ and for all $t>0$.
	\end{lem}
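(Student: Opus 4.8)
The plan is to put the Vlasov equation \eqref{eq:continuous-model} in non-conservative form. Since $u=u(t,x)$ is independent of $v$, we have $\partial_v((u-v)f)=(u-v)\partial_v f-f$, so that
\[
\partial_t f + v\,\partial_x f + (u-v)\,\partial_v f = f .
\]
First I would control the plain moment. Multiplying by $|v|^k$ and integrating over $I\times\R$, the term $\int_{\R}\int_I|v|^k v\,\partial_x f$ vanishes by periodicity in $x$, and an integration by parts in $v$ (the boundary contributions at $|v|\to\infty$ vanishing by finiteness of the moments) yields the identity
\[
\frac{d}{dt}\int_{\R}\int_I |v|^k f\,{\rm d}x\,{\rm d}v = k\int_{\R}\int_I |v|^{k-1}\,\mathrm{sgn}(v)\,(u-v)\,f\,{\rm d}x\,{\rm d}v .
\]
Discarding the nonpositive contribution $-k\int\int|v|^k f$ and using $|v|^{k-1}\le 1+|v|^k$, this gives $\tfrac{d}{dt}\int\int|v|^k f \le C\|u(t)\|_{L^\infty(I)}\int\int(1+|v|^k)f$, which is of Gr\"onwall type.

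Next I would differentiate the equation. Writing $g:=\partial_x f$ and $h:=\partial_v f$, a direct computation produces the coupled transport system
\[
\partial_t g + v\,\partial_x g + (u-v)\,\partial_v g = g - u_x\,h ,
\]
\[
\partial_t h + v\,\partial_x h + (u-v)\,\partial_v h = 2h - g .
\]
I would then run weighted energy estimates: test the $g$-equation against $|v|^k g$ and the $h$-equation against $|v|^k h$, integrate over $I\times\R$, discard the $x$-transport terms (periodicity), and integrate by parts in $v$, which replaces $|v|^k(u-v)\partial_v(g^2)$ and $|v|^k(u-v)\partial_v(h^2)$ by the weight derivative $\partial_v(|v|^k(u-v))=k|v|^{k-1}\mathrm{sgn}(v)(u-v)-|v|^k$. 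Setting
\[
Y_k(t):=\int_{\R}\int_I |v|^k\big(f + g^2 + h^2\big)\,{\rm d}x\,{\rm d}v ,
\]
I would bound the cross terms $\int\int|v|^k u_x\,gh$ and $\int\int|v|^k gh$ by $\tfrac12\|u_x\|_{L^\infty(I)}\int\int|v|^k(g^2+h^2)$ and $\tfrac12\int\int|v|^k(g^2+h^2)$ via Young's inequality, and absorb the lower-order moment contributions using $|v|^{k-1}\le 1+|v|^k$. Combining with the moment estimate for $f$ above, this yields the differential inequality
\[
\frac{d}{dt}Y_k(t)\le C\big(1+\|u(t)\|_{W^{1,\infty}(I)}\big)\big(Y_0(t)+Y_k(t)\big).
\]
Since $\|u\|_{W^{1,\infty}(I)}\in L^1(0,T)$ by hypothesis, I would first apply this with $k=0$ — where the lower-order terms are absent, so the right-hand side is $C(t)\,Y_0$ — and invoke Gr\"onwall's lemma to bound $Y_0$ on $[0,T]$. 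Feeding this bound back into the inequality for general $k$ gives $\tfrac{d}{dt}Y_k\le C(t)\big(C+Y_k\big)$, and a second application of Gr\"onwall produces the claimed uniform-in-time bound on $Y_k$, which is exactly the asserted estimate.

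The main obstacle is the rigorous justification of these formal computations rather than the estimates themselves. Differentiating the Vlasov equation requires sufficient regularity of $(u,f)$, and every integration by parts in $v$ needs the tails at $|v|\to\infty$ to be negligible. I would handle this via the compact support of $f(t,\cdot,\cdot)$ in $v$, which propagates from that of $f_0$ because the characteristic $\dot v = u-v$ keeps $v$ in a bounded set whenever $u\in L^1(0,T;L^\infty(I))$; absent compact support, a standard truncation/approximation argument, using the finiteness of the weighted moments to control the tails, suffices. A secondary technical point is the behaviour of the weight $|v|^k$ near $v=0$: for $k\ge 1$ the derivative $\partial_v|v|^k=k|v|^{k-1}\mathrm{sgn}(v)$ is bounded and the identity above holds in the weak sense, while for $0\le k<1$ one would regularize $|v|^k$ by $(\delta^2+v^2)^{k/2}$ and pass to the limit $\delta\to 0$.
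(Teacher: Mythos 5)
Your proposal is correct and follows essentially the same route as the paper: differentiate the Vlasov equation in $x$ and in $v$, run weighted $L^2$ energy estimates (integration by parts in $v$, periodicity in $x$, Young's inequality for the cross terms $u_x\,\partial_vf\,\partial_xf$ and $\partial_xf\,\partial_vf$), and close with Gr\"onwall using $u\in L^1(0,T;W^{1,\infty}(I))$. The only differences are cosmetic: you use the weight $|v|^k$ with a two-step Gr\"onwall ($k=0$ first, then general $k$) where the paper works directly with $1+|v|^k$, and you explicitly bound the plain moment $\int_{\R}\int_I |v|^k f\,{\rm d}x\,{\rm d}v$, a part of the stated conclusion that the paper's own proof leaves implicit.
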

	
	\begin{proof}
		Consider the equation for $\partial_xf$
		\begin{equation*}
			\partial_t\partial_xf + v\partial_x^2f + \partial_v\left(\partial_xu\, f\right) + \partial_v\left(\left(u - v\right)\partial_xf\right) = 0.
		\end{equation*}
		Multiplying the above equation by $\left(1 + |v|^k\right)\partial_xf$ and integrating with respect to $x,v$ yields
		\begin{equation*}
			\frac{1}{2}\frac{{\rm d}}{{\rm d}t}\int_{\R}\int_I \left(1 + \vert v\vert^k\right)\left\vert\partial_xf\right\vert^2\,{\rm d}x\,{\rm d}v = I_1 + I_2 + I_3
		\end{equation*}
		where
		\[
				I_1 = -\int_{\R}\int_I\left(1 + |v|^k\right)\partial_xu\,\partial_vf\,\partial_xf\,{\rm d}x\,{\rm d}v,
				\quad
				I_2 = \int_{\R}\int_I\left(1 + |v|^k\right)\left\vert\partial_xf\right\vert^2\,{\rm d}x\,{\rm d}v
				\]
				and
				\[
				I_3 = -\frac{1}{2}\int_{\R}\int_I\left(1 + |v|^k\right)\left(u - v\right)\partial_v\left\vert\partial_xf\right\vert^2\,{\rm d}x\,{\rm d}v.
			\]
		After using the Young's inequality in $I_1$, we obtain
		\begin{equation*}
			I_1 \leq \|u_x\|_{L^\infty(I)}\int_{\R}\int_I\left(1 + |v|^k\right)\left( \left\vert\partial_xf\right\vert^2 + \left\vert\partial_vf\right\vert^2\right)\,{\rm d}x\,{\rm d}v.
		\end{equation*}
		An integration by parts yields
		\begin{equation*}
			I_3 = -\frac{1}{2}\int_{\R}\int_I\left(1 + |v|^k\right)\left\vert\partial_xf\right\vert^2\,{\rm d}x\,{\rm d}v + I_4
		\end{equation*}
		with 
		\begin{equation*}
			I_4 = \frac{k}{2}\int_{\R}\int_I\,\vert v\vert^{k-2}v\left(u - v\right)\left\vert\partial_xf\right\vert^2\,{\rm d}x\,{\rm d}v.
		\end{equation*}
		A use of the Young's inequality shows
		\begin{equation*}
			\begin{aligned}
				I_4 &\leq \frac{k}{2}\|u\|_{L^\infty(I)}\int_{\R}\int_I\vert v\vert^{k-1}\left\vert\partial_xf\right\vert^2\,{\rm d}x\,{\rm d}v + \frac{k}{2}\int_{\R}\int_I\,\vert v\vert^k\left\vert\partial_xf\right\vert^2\,{\rm d}x\,{\rm d}v
				\\
				&\leq \frac{k}{2}\|u\|_{L^\infty(I)}\int_{\R}\int_I\left(\frac{k-1}{k}\vert v\vert^{k} + \frac{1}{k}\right)\left\vert\partial_xf\right\vert^2\,{\rm d}x\,{\rm d}v + \frac{k}{2}\int_{\R}\int_I\,\vert v\vert^k\left\vert\partial_xf\right\vert^2\,{\rm d}x\,{\rm d}v
				\\
				&\leq C\left(1 + \|u\|_{L^\infty(I)}\right)\int_{\R}\int_I\left( 1 + \vert v\vert^k\right)\left\vert\partial_xf\right\vert^2\,{\rm d}x\,{\rm d}v.
			\end{aligned}
		\end{equation*}
		Next, consider the equation for $\partial_vf$
		\begin{equation*}
			\partial_t\partial_vf + v\partial_x\partial_vf + \partial_xf - \partial_vf + \partial_v\left(\left(u - v\right)\partial_vf\right) = 0.
		\end{equation*}
		Multiplying the above equation by $\left(1 + \vert v\vert^k\right)\partial_vf$ and integrating with respect to $x,v$ yields
		\begin{equation*}
			\frac{1}{2}\int_{\R}\int_I\left(1 + \vert v\vert^k\right)\left\vert\partial_vf\right\vert^2\,{\rm d}x\,{\rm d}v = I_5 + I_6 + I_7
		\end{equation*}
		where
		\begin{equation*}
			\begin{aligned}
				I_5 &= -\int_{\R}\int_I\left(1 + \vert v\vert^k\right)\partial_xf\,\partial_vf\,{\rm d}x\,{\rm d}v
				\\
				I_6 &= \int_{\R}\int_I\left(1 + \vert v\vert^k\right)\left\vert\partial_vf\right\vert^2\,{\rm d}x\,{\rm d}v
				\\
				I_7 &= -\int_{\R}\int_I\partial_v\left(\left(u - v\right)\partial_vf\right)\left(1 + \vert v\vert^k\right)\partial_vf\,{\rm d}x\,{\rm d}v. 
			\end{aligned}
		\end{equation*}
		After using the Young's inequality in $I_5$, we obtain
		\begin{equation*}
			I_5 \leq \int_{\R}\int_I\left(1 + |v|^k\right)\left( \left\vert\partial_xf\right\vert^2 + \left\vert\partial_vf\right\vert^2\right)\,{\rm d}x\,{\rm d}v.
		\end{equation*}
		An integration by parts yields
		\begin{equation*}
			I_7 = \int_{\R}\int_I \left(1 + \vert v\vert^k\right)\left\vert\partial_vf\right\vert^2\,{\rm d}x\,{\rm d}v + I_8
		\end{equation*}
		with 
		\begin{equation*}
			I_8 = -\frac{1}{2}\int_{\R}\int_I\left(1 + \vert v\vert^k\right)\left(u - v\right)\partial_v\left\vert\partial_vf\right\vert^2\,{\rm d}x\,{\rm d}v.
		\end{equation*}
		An integration by parts shows
		\begin{equation*}
			I_8 = -\frac{1}{2}\int_{\R}\int_I\left(1 + \vert v\vert^k\right)\left\vert\partial_vf\right\vert^2\,{\rm d}x\,{\rm d}v - \frac{k}{2}\int_{\R}\int_I\,\vert v\vert^k\left\vert\partial_vf\right\vert^2\,{\rm d}x\,{\rm d}v + I_9
		\end{equation*}
		with 
		\begin{equation*}
			I_9 = \frac{k}{2}\int_{\R}\int_I\,\vert v\vert^{k-2}vu\,\vert\partial_vf\vert^2\,{\rm d}x\,{\rm d}v.
		\end{equation*}
		A use of the Young's inequality leads to
		\begin{equation*}
			\begin{aligned}
				I_9 &\leq \|u\|_{L^\infty(I)}\frac{k}{2}\int_{\R}\int_I\,\vert v\vert^{k-1}\left\vert\partial_vf\right\vert^2\,{\rm d}x\,{\rm d}v
				\\
				&\leq \|u\|_{L^\infty(I)}\frac{k}{2}\int_{\R}\int_I\left(\frac{k-1}{k}\vert v\vert^{k} + \frac{1}{k}\right)\left\vert\partial_vf\right\vert^2\,{\rm d}x\,{\rm d}v
				\\
				&\leq C\int_{\R}\int_I\left(1 + \vert v\vert^k\right)\left\vert\partial_vf\right\vert^2\,{\rm d}x\,{\rm d}v.
			\end{aligned}
		\end{equation*}
		Altogether, we obtain
		\begin{equation*}
			\begin{aligned}
				\frac{{\rm d}}{{\rm d}t}&\left(\int_{\R}\int_I\left(1 + \vert v\vert^k\right)\left(\left\vert\partial_xf\right\vert^2 + \left\vert\partial_vf\right\vert^2\right)\,{\rm d}x\,{\rm d}v\right) \leq C\left(1 + \|u\|_{W^{1,\infty}(I)}\right)
				\\
				&\hspace{5cm}\int_{\R}\int_I\left(1 + \vert v\vert^k\right)\left(\left\vert\partial_x f\right\vert^2 + \left\vert\partial_vf\right\vert^2\right)\,{\rm d}x\,{\rm d}v.
			\end{aligned}
		\end{equation*}
		Let $ F(t) = \int_{\R}\int_I\left(1 + \vert v\vert^k\right)\left(\left\vert\partial_xf\right\vert^2 + \left\vert\partial_vf\right\vert^2\right)\,{\rm d}x\,{\rm d}v$, then
		\begin{equation*}
			\begin{aligned}
				F'(t) \leq C\left(1 + \|u(t,x)\|_{W^{1,\infty}(I)}\right)F(t)
			\end{aligned}
		\end{equation*}
	this implies 
	\begin{equation*}
		F(t) \leq CF(0)e^{\|u\|_{L^1(0,T;W^{1,\infty}(I))}}.
	\end{equation*}
This completes our proof.
	\end{proof}

	
	\begin{thm}\textbf{(Existence and Uniqueness of strong solution)}
		Let the initial data $\left(f_0,u_0\right)$ be such that
		\[
		f_0 \in L^1(I\times\R)\cap L^\infty(I\times\R)\cap H^1(I\times\R), \quad f_0 \geq 0,
		\]
		\[
		\int_{\R}\int_I\,\vert v\vert^p\left\{f_0 + \left\vert\partial_xf_0\right\vert^2 + \left\vert\partial_vf_0\right\vert^2\right\}\,{\rm d}x\,{\rm d}v \leq C,
		\]
		for $0 \leq p \leq 4$ and $u_0 \in H^1(I)$. Then, there exists a unique global-in-time strong solution $\left(f,u\right)$ to the Vlasov-viscous Burgers' system \eqref{eq:continuous-model}-\eqref{contburger}.
	\end{thm}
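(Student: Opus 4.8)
The plan is to build the solution locally in time by a decoupling fixed-point iteration and then to extend it to $[0,T]$ for arbitrary $T$ by means of the global a priori bounds collected in Lemmas~\ref{density}, \ref{rhoinfty}, \ref{uregularity} and \ref{momentbddgrad}. The iteration map is the natural one: given $u \in L^1(0,T;W^{1,\infty}(I))$, I would first solve the \emph{linear} Vlasov equation
\begin{equation*}
	\partial_t f + v\,\partial_x f + \partial_v\big((u-v)f\big) = 0, \qquad f(0)=f_0,
\end{equation*}
by the method of characteristics $\dot X = V$, $\dot V = u(t,X)-V$. Since this field keeps $V$ confined to a bounded set whenever $u$ is bounded, the resulting $f$ inherits non-negativity and compact $v$-support from $f_0$; I would then form $\rho = \int_\R f\,{\rm d}v$ and $\rho V = \int_\R v f\,{\rm d}v$ and solve the scalar viscous Burgers equation
\begin{equation*}
	\partial_t \tilde u + \tilde u\,\partial_x \tilde u - \epsilon\,\partial_x^2 \tilde u = \rho V - \rho\,\tilde u, \qquad \tilde u(0)=u_0,
\end{equation*}
whose well-posedness for $H^1$ data and $L^2$-in-time source is classical. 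The map $u \mapsto \tilde u$ is then shown to be a contraction on a small interval $[0,T^*]$ using the difference estimate described below, yielding a unique local strong solution.

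To promote the local solution to a global one, I would run the estimates in the order dictated by the appendix. A direct energy estimate for the Burgers equation (testing by $u$ and using $\rho\ge 0$), together with Lemma~\ref{density} (with $p=3$) and \eqref{rhos} and Lemma~\ref{rhoinfty}, controls $\rho$, $\rho V$ and gives $u \in L^\infty(0,T;L^2(I))$ with $u_x \in L^2([0,T]\times I)$. Lemma~\ref{uregularity} then upgrades this to $u \in L^2(0,T;H^2(I)) \cap L^\infty(0,T;H^1(I)) \cap H^1(0,T;L^2(I))$, hence $u \in L^1(0,T;W^{1,\infty}(I))$ by Sobolev embedding. Finally Lemma~\ref{momentbddgrad} converts this $W^{1,\infty}$ bound into uniform control of $\int_\R\int_I (1+|v|^p)\big(|\partial_x f|^2 + |\partial_v f|^2\big)\,{\rm d}x\,{\rm d}v$, i.e.\ the announced $L^\infty(0,T;H^1(I\times\R))$ regularity of $f$ with moments up to order $p=4$. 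Because each of these bounds is global in time and depends only on the data, the norms of the local solution cannot blow up in finite time, so a standard continuation argument extends the solution to every $[0,T]$.

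The difference estimate, which underlies both the local contraction and uniqueness, is obtained as follows. For two solutions $(f_1,u_1)$ and $(f_2,u_2)$ set $\delta f = f_1-f_2$, $\delta u = u_1-u_2$. Testing the Vlasov difference against $\delta f$, whose transport part contributes only a favourable term $-\tfrac12\|\delta f\|_{L^2(\Omega)}^2$ (plus boundary contributions that vanish by compact support and periodicity), leaves the single coupling term $\int_\Omega (\delta u)\,\partial_v f_2\,\delta f\,{\rm d}x\,{\rm d}v$, bounded by $\|\delta u\|_{L^\infty(I)}\|\partial_v f_2\|_{L^2(\Omega)}\|\delta f\|_{L^2(\Omega)}$; the gradient bound on $f_2$ from Lemma~\ref{momentbddgrad}, the embedding $H^1(I)\hookrightarrow L^\infty(I)$ and the viscous dissipation $\epsilon\|\partial_x\delta u\|_{L^2(I)}^2$ allow this to be absorbed. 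For the Burgers difference one writes $u_1\partial_x u_1 - u_2\partial_x u_2 = \tfrac12\partial_x\big((u_1+u_2)\delta u\big)$ and performs the usual viscous energy estimate, the source difference being controlled by $\|\delta f\|_{L^2(\Omega)}$ in the spirit of Lemma~\ref{rhoh2}. Adding the two estimates and invoking Grönwall's inequality on $\|\delta f\|_{L^2(\Omega)}^2 + \|\delta u\|_{L^2(I)}^2$ gives both the contraction (for small $T^*$) and uniqueness.

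The principal difficulty I anticipate is the circular coupling inside the a priori estimates: the Burgers regularity of $u$ (Lemma~\ref{uregularity}) rests on the source bounds for $\rho,\rho V$, which flow from the moments of $f$, while those moment and gradient bounds (Lemma~\ref{momentbddgrad}, carrying the factor $e^{\|u\|_{L^1(0,T;W^{1,\infty})}}$) rest in turn on the $W^{1,\infty}$ regularity of $u$. Organizing the continuation so that this loop closes without the constants cascading, together with the careful handling of the $\partial_v f_2$ coupling term in the stability estimate, is where the bulk of the technical work lies.
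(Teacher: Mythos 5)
Your overall architecture coincides with the paper's: the same decoupled fixed-point map (freeze the fluid velocity, solve the linear Vlasov equation, form $\rho$ and $\rho V$, solve the viscous Burgers' equation), a small-time contraction, and continuation via Lemmas \ref{density}, \ref{rhoinfty}, \ref{uregularity} and \ref{momentbddgrad}. The differences in execution (characteristics for the frozen Vlasov solve instead of the DiPerna--Lions theory invoked in Lemma \ref{fexist}, and an $L^2$-energy estimate for $\delta f$ instead of the paper's $H^1$-contraction for $\bar u$ tested against $\bar u - \bar u_{xx}$) would be acceptable variants. However, there is a genuine gap at the heart of your stability estimate: you assume that $f_0$, and hence every solution, has compact support in $v$ (``the resulting $f$ inherits non-negativity and compact $v$-support from $f_0$''), and you use this to bound the source differences $\rho_1 V_1 - \rho_2 V_2 = \int_{\R} v\,\delta f\,{\rm d}v$ and $\delta\rho\,u_2$ by $\|\delta f\|_{L^2(\Omega)}$ ``in the spirit of Lemma \ref{rhoh2}''. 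The theorem being proved does \emph{not} assume compact support of $f_0$: it only assumes $f_0 \in L^1\cap L^\infty\cap H^1$ together with velocity moments up to order $4$ of $f_0$, $|\partial_x f_0|^2$, $|\partial_v f_0|^2$. Without a uniform bound on the $v$-support, the Cauchy--Schwarz step $\left\Vert\int_{\R} v\,\delta f\,{\rm d}v\right\Vert_{L^2(I)} \lesssim \|\delta f\|_{L^2(\Omega)}$ is simply false (the weight $|v|$ is unbounded on the domain of integration), so your Gr\"onwall quantity $\|\delta f\|^2_{L^2(\Omega)} + \|\delta u\|^2_{L^2(I)}$ does not close.

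This is precisely the difficulty the paper's Lemma \ref{Tcontr} is built to handle: instead of unweighted $L^2$ control of the difference, it propagates velocity-weighted $L^1$ norms $\int_{\R}\int_I |v|^k|\bar f|\,{\rm d}x\,{\rm d}v$, $k = 0,1,2,3$, obtained by multiplying the difference equation \eqref{131} by the regularized sign function $|v|^k\bar f/\sqrt{\bar f^2+\delta}$, integrating, and passing $\delta \to 0$; the error terms $T^1_k, T^2_k$ in that limit are controlled exactly by the fourth-order moment bounds on $|\partial_v f|^2$ from Lemma \ref{momentbddgrad} --- this is where the hypothesis $p \le 4$ is consumed --- and the momentum difference is then estimated through the third moment as in \eqref{191}, following the interpolation argument of Lemma \ref{density}. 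To repair your proof under the stated hypotheses you would have to replace the compact-support step by such weighted estimates (either the paper's weighted-$L^1$ scheme, or a weighted-$L^2$ analogue propagating $\int (1+|v|^2)^2|\delta f|^2$), and this substitution is not cosmetic: it is the main technical content of the paper's proof of the contraction, and your argument as written omits it.
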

	
	\begin{proof}
		Let $0 < T < \infty$ and set $X = L^\infty(0,T;H^1(I))$, with norm 
		\[
		\|u\|^2_{X} = \sup_{t \in [0,T]}\left(\int_I\left(u^2 + |\partial_x u|^2\right)\,{\rm d}x\right).
		\]
		We now consider the map 
		\begin{equation}\label{T}
			\begin{aligned}
				\mathcal{T} &: X \rightarrow X
				\\
				u^* &\longmapsto u = \mathcal{T}(u^*)
			\end{aligned}
		\end{equation}
		defined by the following scheme.
		\begin{itemize}
			\item Solve the Vlasov equation
			\begin{equation}\label{k}
				\partial_t f + v\,\partial_x f + \partial_v \left(\left(u^* - v\right)f\right) = 0,
			\end{equation}
			with initial data $f_0$.
			\item Solve the viscous Burgers' equation 
			\begin{equation}\label{f}
				\begin{aligned}
					\left(u_t,\phi\right) + \left(uu_x, \phi\right) + \epsilon\left(u_x,\phi_x\right) = \left(\rho V - \rho u, \phi\right), \quad \forall\,\, \phi \in H^1(I)
				\end{aligned}
			\end{equation}
			with initial data $u_0$. Here $\rho$ and $\rho V$ are the local density and the momentum associated with the solution $f$ of \eqref{k}. 
		\end{itemize}
		To begin with, we show that the above map $\mathcal{T}$ is well-defined. For a given $u^* \in X$ and a given initial datum $f_0$, the Vlasov equation \eqref{k} is uniquely solvable (see Lemma \ref{fexist} below for details). Having solved \eqref{k} for $f(u^*)$, one gather that the corresponding local density $\rho \in L^\infty$ (see Lemma \ref{rhoinfty}) and the corresponding momentum $\rho V \in L^2$ (see Lemma \ref{density}). Hence, classical theory for the viscous Burgers' problem \cite{MR47234} yields a unique solution $u \in X$ for the problem \eqref{f}. Thus, the map $\mathcal{T} : X \to X$ that takes $u^*$ to $\mathcal{T}(u^*) = u$ is well-defined.
		Our next step in the proof is to show that $\mathcal{T}$ is a contraction map and that has been demonstrated in Lemma \ref{Tcontr} below. Therefore, an application of the Banach fixed-point theorem ensures the existence of a unique solution $(f,u)$ in a short time interval $(0,T^0)$. As the solution $(f,u)$ stays bounded at $t = T^0$, thanks to a priori estimates, we can employ continuation argument to extend the interval of existence upto $(0,T]$. As $T$ is arbitrary, we get global-in-time well-posedness of our system.
	\end{proof}
	
	Next we deal with Lemmata \ref{fexist} and \ref{Tcontr} which played a crucial role in the above proof.

	
	\begin{lem}\label{fexist}
		Let $u^* \in X$ and let $f_0 \in L^1(I\times \R)\cap L^\infty(I\times\R)$. Then, there exists a unique solution $f \in L^\infty(0,T;L^1(I\times\R)\cap L^\infty(I\times\R))$ to \eqref{k}.
	\end{lem}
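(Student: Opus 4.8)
The plan is to read \eqref{k} as a linear transport equation in the phase--space variables $(x,v)$, driven by the vector field $b(t,x,v):=\big(v,\ u^*(t,x)-v\big)$, and to exploit its special structure. Expanding the flux and using $\partial_v(u^*-v)=-1$ puts the equation in the nonconservative form
\[
\partial_t f + v\,\partial_x f + (u^*-v)\,\partial_v f = f,
\]
so that $\operatorname{div}_{(x,v)} b = -1$ is constant, hence bounded. First I would extract the two bounds demanded by the statement directly from the characteristic picture. If $(X,V)$ denotes the characteristic flow $\dot X = V$, $\dot V = u^*(t,X)-V$ and $\Phi_t$ the associated flow map, then along characteristics $\tfrac{d}{dt}f = f$, whence $f(t,\Phi_t(x,v)) = e^{t} f_0(x,v)$. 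This gives at once $\|f(t)\|_{L^\infty(I\times\R)} = e^{t}\|f_0\|_{L^\infty(I\times\R)} \le e^{T}\|f_0\|_{L^\infty(I\times\R)}$, while the change of variables $y=\Phi_t(x,v)$, whose Jacobian equals $\exp\!\big(\int_0^t \operatorname{div} b\,\mathrm{d}s\big)=e^{-t}$, yields the exact conservation $\|f(t)\|_{L^1(I\times\R)} = \|f_0\|_{L^1(I\times\R)}$. Thus the target space $L^\infty(0,T;L^1\cap L^\infty)$ is reached, provided the flow can be made sense of.

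Making sense of this flow is the main obstacle. Since $u^*\in X=L^\infty(0,T;H^1(I))$, the one--dimensional Sobolev embedding gives only $u^*(t,\cdot)\in C^{0,1/2}(I)$, which is continuous but not Lipschitz in $x$; hence the characteristic system is not covered by the classical Cauchy--Lipschitz theory and a pointwise flow need not exist. To get around this I would invoke the DiPerna--Lions theory of renormalized solutions: the field $b$ lies in $L^1(0,T;W^{1,1}_{\mathrm{loc}}(\R^2))$ (its only nonsmooth component is $u^*(t,x)$, with $\partial_x u^*\in L^\infty(0,T;L^2(I))$), it has the bounded divergence computed above, and it grows at most linearly so that $b/(1+|x|+|v|)\in L^1+L^\infty$. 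Under exactly these hypotheses the transport equation possesses a unique renormalized solution carried by the almost--everywhere defined regular Lagrangian flow.

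Concretely I would realize this by regularization. Mollify $u^*$ in $x$ to obtain smooth $u^*_\eps$ and solve the corresponding Vlasov equation by characteristics; the field $b_\eps$ is now Lipschitz, its flow $\Phi^\eps_t$ is a diffeomorphism with Jacobian $e^{-t}$, and $f_\eps$ is defined by $f_\eps(t,\Phi^\eps_t(x,v))=e^{t}f_0(x,v)$. The two computations of the previous paragraph show that $\|f_\eps(t)\|_{L^1}=\|f_0\|_{L^1}$ and $\|f_\eps(t)\|_{L^\infty}\le e^{T}\|f_0\|_{L^\infty}$ \emph{uniformly} in $\eps$, since they depend only on $\operatorname{div} b_\eps=-1$. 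Passing to the limit $\eps\to0$ by weak-$*$ compactness in $L^\infty(0,T;L^\infty)$ together with the DiPerna--Lions stability of renormalized solutions produces a solution $f\in L^\infty(0,T;L^1\cap L^\infty)$ of \eqref{k}, and uniqueness is inherited from the uniqueness of renormalized solutions, any bounded distributional solution of this class being automatically renormalized. I expect the only technical care to be needed in verifying the linear--growth/integrability hypotheses on the unbounded $v$--axis and in justifying the stability passage, both of which are standard once the uniform $L^1\cap L^\infty$ bounds are in hand.
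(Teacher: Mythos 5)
Your proposal is correct and takes essentially the same approach as the paper: the paper likewise rewrites \eqref{k} in nonconservative form $\partial_t f + b\cdot\nabla_{x,v}f - f = 0$ with $b=(v,\,u^*(t,x)-v)$, checks that $b\in L^1(0,T;H^1(I\times(-K,K)))$ for every $K$, that $\mathrm{div}_{x,v}\,b=-1$ is bounded, and that $|b|/(1+|v|)$ is bounded, and then invokes the DiPerna--Lions theory (Corollaries II-1 and II-2 of that work) to conclude existence and uniqueness. The only difference is that you unpack the mechanism behind the cited results (mollification of $u^*$, uniform $L^1\cap L^\infty$ bounds from the characteristic flow with Jacobian $e^{-t}$, stability, and uniqueness via renormalization), which the paper leaves entirely to the reference.
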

	
	\begin{proof}
		Note that \eqref{k} can be rewritten as 
		\begin{equation*}
			\partial_t f + b \cdot \nabla_{x,v}f - f =0,
		\end{equation*}
		where $b = (v, u^*(t,x) - v)$, which lies in 
		\begin{equation*}
			L^1(0,T;H^1(I \times (-K,K))), \quad 0 < K < \infty.
		\end{equation*}
		Note that div$_{x,v}b = -1 \in L^\infty((0,T) \times \Omega)$. Furthermore, $|b|/(1 + |v|)$ is bounded. This setting appeals to the general results in \cite{diperna1989ordinary}. In particular, we can apply \cite[Corollaries II-1 and II-2, p.518]{diperna1989ordinary} to arrive at the existence of the unique solution.
	\end{proof}

	
	\begin{lem}\label{Tcontr}
		The map defined by \eqref{k} and \eqref{f} is a contraction map.
	\end{lem}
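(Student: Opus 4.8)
The plan is to take two fields $u_1^*,u_2^*\in X$, run the two-step scheme \eqref{k}--\eqref{f} on each to produce $(f_i,u_i)=(f(u_i^*),\mathcal T(u_i^*))$, and bound the output difference $w:=u_1-u_2$ in the $X$-norm by the input difference $u_1^*-u_2^*$, gaining a factor that is small on a short interval $(0,T^*)$. Throughout I work on a fixed closed ball $B_R\subset X$ on which the a priori bounds of Lemmas \ref{uregularity}, \ref{momentbddgrad}, \ref{rhoinfty} and \ref{density} hold, so that $\|u_i\|_{L^\infty(0,T;H^1)}$, $\|u_i\|_{L^2(0,T;H^2)}$, $\|\partial_v f_i\|_0$, $\|\partial_x f_i\|_0$ and $\|\rho_i\|_{L^\infty}$ are all under control.

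First I estimate $g:=f_1-f_2$. Subtracting the two copies of \eqref{k} and using that $u_i^*$ depends only on $(t,x)$, one sees that $g$ solves a linear transport equation driven by the velocity field of $u_1^*$ with source $(u_1^*-u_2^*)\,\partial_v f_2$. Testing against $g$, the transport part is skew-symmetric up to the lower-order $\mathrm{div}_v$ term (which contributes a harmless $+\tfrac12\|g\|_0^2$), so the only genuine right-hand side is $\int (u_1^*-u_2^*)\,\partial_v f_2\, g$. I bound this by $\|u_1^*-u_2^*\|_{L^\infty(I)}\|\partial_v f_2\|_0\|g\|_0$ and, via $H^1(I)\hookrightarrow L^\infty(I)$, by $C\|u_1^*-u_2^*\|_X\|g\|_0$. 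Since $g(0)=0$, Grönwall gives $\sup_{[0,T^*]}\|g\|_0\le C\sqrt{T^*}\,\|u_1^*-u_2^*\|_X$. Because the $v$-support is compact, the continuum analogue of Lemma \ref{rhoh2} converts this into $\|\rho_1-\rho_2\|_{L^2}+\|\rho_1V_1-\rho_2V_2\|_{L^2}\le C\sqrt{T^*}\,\|u_1^*-u_2^*\|_X$.

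Next I estimate $w$, which solves the difference of the problems \eqref{f} with nonlinearity written as $\tfrac12\partial_x((u_1+u_2)w)$ and right-hand side $(\rho_1V_1-\rho_2V_2)-\rho_1 w-(\rho_1-\rho_2)u_2$. Testing with $w$ controls $\|w\|_{L^\infty(0,T^*;L^2)}$ and $\sqrt{\epsilon}\,\|w_x\|_{L^2_tL^2_x}$: the nonlinear term is absorbed into the viscous term at the cost of a factor $\epsilon^{-1}\|u_1+u_2\|_{L^\infty}^2$, and the source is handled by the density estimates together with $\|\rho_1\|_{L^\infty}$ and $\|u_2\|_{L^\infty}$. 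To recover the full $X$-norm I additionally test with $-w_{xx}$, producing $\|w_x\|_{L^\infty(0,T^*;L^2)}$; the nonlinear and source contributions there are bounded using the $H^2$-regularity of $u_1,u_2$ from Lemma \ref{uregularity} and an $H^1$-in-$x$ bound on $g$ obtained by repeating the transport estimate on $\partial_x g$ exactly as in Lemma \ref{momentbddgrad}. Combining the two estimates and applying Grönwall once more yields $\|w\|_X\le C(\epsilon,R)\sqrt{T^*}\,\|u_1^*-u_2^*\|_X$; choosing $T^*$ so small that $C(\epsilon,R)\sqrt{T^*}<1$ makes $\mathcal T$ a contraction on $B_R$, which is the claim.

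The main obstacle is this last upgrade: the difference estimates live naturally at the energy level $L^\infty_tL^2_x\cap L^2_tH^1_x$, whereas the $X$-norm demands $L^\infty_tH^1_x$. This forces the $-w_{xx}$ test, in which the Burgers nonlinearity $u_1u_{1,x}-u_2u_{2,x}$ must be split carefully and absorbed into $\epsilon\|w_{xx}\|_0^2$ via the $H^2$ bounds on the iterates. That step is responsible for the $\epsilon$-dependence of the contraction constant and hence for the smallness required of $T^*$, and it is the part where the bookkeeping is most delicate.
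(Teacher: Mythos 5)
Your treatment of the Burgers part is essentially the paper's: testing the difference equation with $w$ and then with $-w_{xx}$ is equivalent to the paper's single choice $\phi = \bar{u} - \bar{u}_{xx}$ in \eqref{141}, and your absorption/smallness-in-$T$ structure matches \eqref{151}--\eqref{181}. The genuine gap is in the kinetic step. Lemma \ref{Tcontr} lives in Appendix \ref{exststrong}, where the strong-solution theorem assumes only $f_0 \in L^1(I\times\R)\cap L^\infty(I\times\R)\cap H^1(I\times\R)$ with finite velocity moments up to order $4$; the data are \emph{not} compactly supported in $v$, and nothing in the fixed-point scheme \eqref{k} creates compact support. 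So your pivot --- ``because the $v$-support is compact, the continuum analogue of Lemma \ref{rhoh2} converts $\|f_1 - f_2\|_{L^2}$ into $\|\rho_1-\rho_2\|_{L^2} + \|\rho_1 V_1 - \rho_2 V_2\|_{L^2}$'' --- is not available: with $v$ ranging over all of $\R$, the quantities $\int_\R |g|\,{\rm d}v$ and $\int_\R |v|\,|g|\,{\rm d}v$ cannot be bounded by $\|g\|_{L^2_{x,v}}$ alone; one needs decay in $v$, i.e.\ weighted control of the difference. This is exactly why the paper never estimates $\bar{f} = f_1 - f_2$ in $L^2$: instead it multiplies \eqref{131} by the renormalized sign $|v|^k \bar{f}/\sqrt{\bar{f}^2+\delta}$ for $k$ up to $3$, obtains bounds on the weighted $L^1$ quantities $\int\!\!\int |v|^k|\bar{f}|$ (the $\delta$-error terms $T^1_k, T^2_k$ vanish as $\delta\to 0$ thanks to the moment bounds on $|\partial_v f|^2$ from Lemma \ref{momentbddgrad}), and then converts these, via the interpolation argument of Lemma \ref{density} (cf.\ \eqref{191}--\eqref{201}), into the $L^2_x$ bounds \eqref{241}--\eqref{251} on the density and momentum differences that feed the Burgers estimate \eqref{181}.

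Your scheme could be repaired, but not for free: you would need to replace the plain $L^2$ energy estimate on $g$ by weighted estimates, e.g.\ propagating $\int\!\!\int (1+|v|)^m g^2\,{\rm d}x\,{\rm d}v$ for $m$ large enough that Cauchy--Schwarz in $v$ controls $\int_\R (1+|v|)|g|\,{\rm d}v$ in $L^2_x$ --- in effect redoing the moment-propagation argument of Lemma \ref{momentbddgrad} for the difference, which is the real content the proposal skips. Two smaller points: (i) on the Burgers side you do not need an $H^1$-in-$x$ bound on $g$; the source terms pair against $w_{xx}$ by Cauchy--Schwarz and are absorbed into $\epsilon\|w_{xx}\|^2_{L^2}$, so only $L^2_x$ control of the density differences is required (this is what the paper does); (ii) a fixed-point argument on a ball $B_R$ also requires $\mathcal{T}(B_R)\subseteq B_R$, which you assert but never verify.
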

	\begin{proof}
		Let for given $u_i^*$ there exist a unique solution $f_i$ for equation \eqref{k}. Define $\bar{u} = u_1 - u_2, \bar{u}^* = u_1^* - u_2^*$ and $\bar{f} = f_1 - f_2$, then from \eqref{k}-\eqref{f} we find that
		\begin{equation}\label{131}
			\bar{f}_t + v\bar{f}_x + \partial_v\left(\bar{u}^*f_1 + u_2^*\bar{f} - v\bar{f}\right) = 0, \quad \mbox{with} \quad	\bar{f}(0,x,v) = 0,
		\end{equation}
		and
		\begin{equation}\label{141}
			\left(\bar{u}_t, \phi\right)_I - \epsilon\,\left(\bar{u}_{xx}, \phi\right)_I = \left(\int_{\R}\left(v\bar{f} + u_2\bar{f} - \bar{u}f_1\right)\,{\rm d}v, \phi\right)_I - \left(\left(\bar{u}\partial_xu_1 + u_2\bar{u}_x\right), \phi\right)_I,
		\end{equation}
		with $\quad \bar{u}(0,x) = 0.$

Choose $\phi = \bar{u} - \bar{u}_{xx}$ in  \eqref{141} to obtain
\begin{equation*}
	\begin{aligned}
		\frac{1}{2}\partial_t\|\bar{u}\|^2_{L^2(I)} + \frac{1}{2}\partial_t\|\bar{u}_x\|^2_{L^2(I)} + \epsilon\|\bar{u}_x\|^2_{L^2(I)} + \epsilon\,\|\bar{u}_{xx}\|^2_{L^2(I)} &= \left(\int_{\R}\left(v\bar{f} + u_2\bar{f} - \bar{u}f_1\right)\,{\rm d}v, \bar{u} - \bar{u}_{xx}\right)_I
		\\
		&\quad- \left(\left(\bar{u}\partial_xu_1 + u_2\bar{u}_x\right), \bar{u} - \bar{u}_{xx}\right)_I
	\end{aligned}
\end{equation*}
a use of the H\"older inequality with the Young's inequality, a kickback argument and integration in time yields
\begin{equation}\label{151}
	\begin{aligned}
		\|\bar{u}\|^2_{X} + \epsilon\|\bar{u}_x\|^2_{L^2([0,T]\times I)} + \epsilon\,\|\bar{u}_{xx}\|_{L^2([0,T]\times I)}^2 \leq& \quad 4\epsilon^{-1}\,\left\Vert\int_{\R}\left(v\bar{f} + u_2\bar{f} - \bar{u}f_1\right)\,{\rm d}v\right\Vert^2_{L^2([0,T]\times I)}
		\\
		& + 4\epsilon^{-1}\,\|\bar{u}\partial_xu_1 + u_2\bar{u}_x\|^2_{L^2([0,T] \times I)}  + \epsilon T\|\bar{u}\|^2_X.
	\end{aligned}
\end{equation}

		Now, the H\"older inequality followed by Sobolev imbedding shows
		\begin{equation}\label{161}
			\begin{aligned}
				&\left\Vert\int_{\R}\left(v\bar{f} + u_2\bar{f} - \bar{u}f_1\right)\,{\rm d}v\right\Vert_{L^2([0,T]\times I)} \leq \left\Vert\int_{\R}v\bar{f}\,{\rm d}v\right\Vert_{L^2([0,T]\times I)} 
				\\
				&\hspace{3.5cm}+ T\|\bar{u}\|_{X}\|m_0f_1\|_{L^\infty(0,T;L^2(I))} + \|u_2\|_{X }\|m_0\bar{f}\|_{L^2([0,T] \times I)},
			\end{aligned}
		\end{equation}
		and
		\begin{equation}\label{vv1.}
			\begin{aligned}
				\|\bar{u}\partial_xu_1 + u_2\bar{u}_x\|_{L^2([0,T]\times I)} &\leq \|\bar{u}\|_{L^\infty([0,T]\times I)}\|\partial_xu_1\|_{L^2([0,T]\times I)}
				\\
				&\quad + \|u_2\|_{L^2([0,T];L^\infty(I))}\|\bar{u}\|_{L^\infty([0,T];H^1(I))}
				\\
				&\leq T^\frac{1}{2}\left(\|u_1\|_{X} + \|u_2\|_X\right)\|\bar{u}\|_{X}.
			\end{aligned}
		\end{equation}
		For a sufficiently small $T > 0$ 
		\begin{equation}\label{171}
			\begin{aligned}
				4\epsilon^{-1}\,T\left(\|m_0f_1\|^2_{L^\infty(0,T;L^2(I))} + \|u_1\|^2_{X} + \|u_2\|^2_X + \frac{\epsilon^2}{4}\,\right) \leq \frac{1}{2}.
			\end{aligned}
		\end{equation}
		Using equation \eqref{151}-\eqref{171}, we obtain
		\begin{equation}\label{181}
			\|\bar{u}\|_{X}^2 \leq C\left\Vert\int_{\R}v\bar{f}\,{\rm d}v\right\Vert^2_{L^2([0,T]\times I)} + C\|u_2\|^2_{X}\|m_0\bar{f}\,{\rm d}v\|^2_{L^2([0,T] \times I)}.
		\end{equation}
		Now, a similar calculation as in proof of Lemma \ref{density} shows 
		\begin{equation}\label{191}
			\left\Vert\int_{\R}v\bar{f}\,{\rm d}v\right\Vert_{L^2([0,T] \times I)} \leq C\int_0^T\int_{\R}\int_I|v|^3|\bar{f}|\,{\rm d}x\,{\rm d}v\,{\rm d}t,
		\end{equation}
		and
		\begin{equation}\label{201}
			\left\Vert\int_{\R}\bar{f}\,{\rm d}v\right\Vert_{L^2([0,T]\times I)} \leq C\int_0^T\int_{\R}\int_I|v||\bar{f}|\,{\rm d}x\,{\rm d}v\,{\rm d}t.
		\end{equation}
		Multiplying equation \eqref{131} by $|v|^k\frac{\bar{f}}{\sqrt{\bar{f}^2 + \delta}}$; $k \geq 1$, we obtain
		\begin{equation*}
			\begin{aligned}
				|v|^k\partial_t\left(\sqrt{\bar{f}^2 + \delta}\right) &+ |v|^k\,v\,\partial_x\left(\sqrt{\bar{f}^2 + \delta}\right) + \bar{u}^*|v|^k\frac{\bar{f}}{\sqrt{\bar{f}^2 + \delta}}\,\partial_vf_1 
				\\
				&+ u_2^*|v|^k\,\partial_v\left(\sqrt{\bar{f}^2 + \delta}\right) - |v|^k\,\frac{\bar{f}}{\sqrt{\bar{f}^2 + \delta}}\,\partial_v\left(v\bar{f}\,\right) = 0.
			\end{aligned}
		\end{equation*}
		Then, an integrate with respect to $x,v$ yields
		\begin{equation*}
			\begin{aligned}
				\partial_t\int_\R&\int_I |v|^k\left(\sqrt{\bar{f}^2 + \delta}\right) \,{\rm d}x\,{\rm d}v - k\int_\R\int_I\bar{u}^*\,|v|^{k-2}v\,\frac{\bar{f}}{\sqrt{\bar{f}^2 + \delta}}\,f_1\,{\rm d}x\,{\rm d}v 
				\\
				&- \int_\R\int_I|v|^k\,\bar{u}^*\,f_1\,\partial_v\left(\frac{\bar{f}}{\sqrt{\bar{f}^2 + \delta}}\right)\,{\rm d}x\,{\rm d}v - k\int_\R\int_I |v|^{k-2}v\,u^*_2\,\left(\sqrt{\bar{f}^2 + \delta}\right)\,{\rm d}x\,{\rm d}v 
				\\
				&+ k\int_\R\int_I|v|^k\,\frac{\bar{f}^2}{\sqrt{\bar{f}^2 + \delta}} \,{\rm d}x\,{\rm d}v + \int_\R\int_I |v|^k\partial_v\left(\frac{\bar{f}}{\sqrt{\bar{f}^2 + \delta}}\right)v\bar{f}\,{\rm d}x\,{\rm d}v = 0.
			\end{aligned}
		\end{equation*}
		A use of Sobolev inequality with integration in time shows
		\begin{equation}\label{231}
			\begin{aligned}
				\sup_{t \in [0,T]}\int_{\R}\int_I|v|^k&\left(\sqrt{\bar{f}^2 + \delta}\right)\,{\rm d}x\,{\rm d}v + k\int_0^T\int_{\R}\int_I|v|^k\frac{\bar{f}^2}{\sqrt{\bar{f}^2 + \delta}}\,{\rm d}x\,{\rm d}v\,{\rm d}t 
				\\
				&\leq 
				\int_0^Tk\|\bar{u}^*\|_{H^1(I)}\|m_{k-1}f_1\|_{L^1(I)} \,{\rm d}t + |T_k^1| + |T^2_k|
				\\
				&\quad+ \int_{0}^T k\|u_2^*\|_{H^1(I)}\left\Vert\int_{\R}|v|^{k-1}\left(\sqrt{\bar{f}^2 + \delta}\right) \,{\rm d}v\right\Vert_{L^1(I)}\,{\rm d}t 
				\\
				&\leq T\|\bar{u}^*\|_{X}\|m_{k-1}f_1\|_{L^\infty(0,T;L^1(I))} + |T_k^1| + |T^2_k|
				\\
				&\,\,+ T\|u_2^*\|_{X}\left\Vert\int_{\R}|v|^{k-1}\left(\sqrt{\bar{f}^2 + \delta}\right)\,{\rm d}v\right\Vert_{L^\infty(0,T;L^1(I))}.
			\end{aligned}
		\end{equation}
		Here,
		\begin{equation*}
			T^1_k = \int_0^T\int_\R\int_I|v|^k\,\bar{u}^*\,f_1\,\partial_v\left(\frac{\bar{f}}{\sqrt{\bar{f}^2 + \delta}}\right)\,{\rm d}x\,{\rm d}v\,{\rm d}t = \int_0^T\int_\R\int_I|v|^k\,\bar{u}^*\,f_1\,\frac{\delta\,\bar{f}_v}{\left(\bar{f}^2+\delta\right)^\frac{3}{2}}{\rm d}x\,{\rm d}v\,{\rm d}t
		\end{equation*}
		and
		\begin{equation*}
			T^2_k = \int_0^T\int_\R\int_I |v|^k\partial_v\left(\frac{\bar{f}}{\sqrt{\bar{f}^2 + \delta}}\right)v\bar{f}\,{\rm d}x\,{\rm d}v\,{\rm d}t = \int_0^T\int_\R\int_I |v|^k v\bar{f}\frac{\delta\,\bar{f}_v}{\left(\bar{f}^2+\delta\right)^\frac{3}{2}}\,{\rm d}x\,{\rm d}v\,{\rm d}t.
		\end{equation*}
		As $\bar{f} \in L^1(0,T;L^\infty(I\times\R))$ and as fourth order velocity moments of $\left\vert\partial_vf\right\vert^2$ and $\bar{f}$ are bounded (see Lemma \ref{momentbddgrad}), $\left\vert T_k^1\right\vert \to 0$ and $\left\vert T_k^2\right\vert \to 0$ as $\delta \to 0$ for $k = 1,2,3$.
		Next, we multiply equation \eqref{131} by $\frac{\bar{f}}{\sqrt{\bar{f}^2+\delta}}$ and integrate with respect to $x,v$ and $t$ variables to obtain
		\begin{equation*}\label{11111}
			\begin{aligned}
				\int_\R\int_I\sqrt{\bar{f}^2+\delta}\,{\rm d}x\,{\rm d}v - \int_0^T\int_\R\int_I\frac{\delta\,\bar{f}_v}{\left(\bar{f}^2+\delta\right)^\frac{3}{2}}&\left(\bar{u}^*f_1 + u_2^*\bar{f} - v\bar{f}\right)\,{\rm d}x\,{\rm d}v 
				\\
				&= \int_\R\int_I\sqrt{\bar{f}^2(0,x,v)+\delta}\,{\rm d}x\,{\rm d}v,
			\end{aligned}
		\end{equation*}
		with $\bar{f}(0,x,v) = 0$. Arguing as we did with the $T_k^1$ and $T_k^2$ terms, in the $\delta \rightarrow 0$ limit in above equation, we arrive at
		\begin{equation}
			\int_\R\int_I|\bar{f}|\,{\rm d}x\,{\rm d}v = 0.
		\end{equation}

		A use of equation \eqref{231} for $k = 1$ in equation \eqref{201} and $\delta \rightarrow 0$ shows
		\begin{equation}\label{241}
			\left\Vert\int_{\R}\bar{f}\,{\rm d}v\right\Vert_{L^2([0,T]\times I)} \leq T\|\bar{u}^*\|_{X}\|f_0\|_{L^1(\Omega)}.
		\end{equation}
		Using equation \eqref{191} and by induction from equation \eqref{231}, we deduce
		\begin{equation}\label{251}
			\begin{aligned}
				\left\Vert\int_{\R}v\bar{f}\,{\rm d}v\right\Vert_{L^2([0,T]\times I)} &\lesssim T\|\bar{u}^*\|_{X}\left(\|m_2f_1\|_{L^\infty(0,T;L^1(I))} + \|u_2^*\|_{X}\|m_1f_1\|_{L^\infty(0,T;L^1(I))}\right.
				\\
				&\left. \quad  + \|u_2^*\|^2_{X}\|m_0f_1\|_{L^\infty(0,T;L^1(I))}\right).
			\end{aligned}
		\end{equation}
		Using \eqref{241}-\eqref{251} for a sufficiently small $T > 0$ from \eqref{181}, we obtain
		\[
		\|\bar{u}\|_{X} \leq \alpha_0\|\bar{u}^*\|_{X}, \quad \alpha_0 < 1.
		\]
		This shows that $\mathcal{T}$ is a contraction map.
	\end{proof}

\end{document}